\documentclass{amsart}
\usepackage{amsmath}
\usepackage{amsfonts}
\usepackage{amssymb}
\usepackage{color}
\usepackage{graphics,graphicx}
\usepackage{hyperref}
\usepackage{amssymb,latexsym,amsthm,xypic}

\newtheorem{thm}{Theorem}[section]
\newtheorem{cor}[thm]{Corollary}
\newtheorem{lem}[thm]{Lemma}
\newtheorem{prop}[thm]{Proposition}

\theoremstyle{definition}
\newtheorem{defn}{Definition}[section]

\theoremstyle{remark}
\newtheorem{rem}{Remark}[section]

\newcommand{\be}{\begin{equation}}
\newcommand{\ee}{\end{equation}}
\newcommand{\bea}{\begin{eqnarray}}
\newcommand{\eea}{\end{eqnarray}}
\newcommand{\ben}{\begin{eqnarray*}}
\newcommand{\een}{\end{eqnarray*}}
\newcommand{\bt}{\begin{split}}
\newcommand{\et}{\end{split}}
\newcommand{\bet}{\begin{equation}}
\newcommand{\mc}{\mathbb{C}}
\newcommand{\mr}{\mathbb{R}}
\newcommand{\ra}{\rightarrow}

\begin{document}

\title[New characterizations of plurisubharmonic functions]{New characterizations of plurisubharmonic functions and positivity of direct image sheaves}
\date{}
\author[F. Deng]{Fusheng Deng}
\address{Fusheng Deng: \ School of Mathematical Sciences, University of Chinese Academy of Sciences\\ Beijing 100049, P. R. China}
\email{fshdeng@ucas.ac.cn}
\author[Z. Wang]{Zhiwei Wang}
\address{ Zhiwe Wang: \ School
of Mathematical Sciences\\Beijing Normal University\\Beijing\\ 100875\\ P. R. China}
\email{zhiwei@bnu.edu.cn}
\author[L. Zhang]{Liyou Zhang}
\address{ Liyou Zhang: \ School of Mathematical Sciences\\Capital Normal University\\Beijing \\100048\\
P. R.  China}
\email{zhangly@cnu.edu.cn}
\author[X. Zhou]{Xiangyu Zhou}
\address{Xiangyu Zhou:\ Institute of Mathematics, AMSS, and Hua Loo-Keng Key Laboratory of Mathematics, Chinese Academy of Sciences, Beijing 100190,
China; School of Mathematical Sciences, University of Chinese
Academy of Sciences, Beijing 100049, China}
\email{xyzhou@math.ac.cn}

\begin{abstract}
We give new characterizations of plurisubharmonic functions and
Griffiths positivity of holomorphic vector bundles with singular
Finsler metrics. As applications, we present a different method to
prove plurisubharmonic variation of generalized Bergman kernel
metrics and Griffiths positivity of the direct images of twisted
relative canonical bundles associated to holomorphic families of
K\"ahler manifolds.
\end{abstract}
\maketitle

\tableofcontents

\section{Introduction}
The aim of the present paper is to give a new characterization of
plurisubharmonic (p.s.h. for abbreviation) functions and a new
characterization of Griffiths positivity of holomorphic vector
bundles with singular Finsler metrics, and present a different
method to discuss plurisubharmonic variation of generalized Bergman
kernel metrics and Griffiths positivity of the direct image of the
twisted relative canonical bundle associated to a holomorphic family
of complex K\"ahler manifolds, by using the characterizations. The
work is inspired by Demailly's method to the regularization of
plurisubharmonic functions \cite{Dem92} and Berndtsson's proof of
the integral form of the Kiselman's minimum principle for
p.s.h. functions \cite{K78}, \cite{Bob98}.\\

To explain our observations, let us first take a look at Demailly's
method to the regularization of p.s.h. functions. Let $\varphi$ be a
p.s.h function on a bounded pseudoconvex domain $D\subset\mc^n$. Let
$K_m$ be the weighted Bergman kernel of $H^2(D, e^{-m\varphi})$, the
Hilbert space of holomorphic functions on $D$ which are
square-integrable with respect to the weight $e^{-m\varphi}$.
Applying the Ohsawa-Takegoshi extension theorem \cite{OT1}, Demailly
showed that $\frac{1}{m}\log K_m$ converges (in certain sense) to
$\varphi$ on $D$ as $m\rightarrow \infty$, and then got a
regularization of the original p.s.h function $\varphi$.\\

The first observation in this paper is a new characterization of p.s.h. functions
based on certain $L^p$ extension property.

We start from an upper semi-continuous function $\varphi$ on $D$
which is not assumed to be p.s.h. at the beginning, but we assume
that the Ohsawa-Takegoshi extension theorem also holds on $D$ with
the weight $e^{-m\varphi}$ for all $m\geq 1$. Then from the argument
of Demailly,  the convergence for $\frac{1}{m}\log K_m$ to $\varphi$
is also valid. On the other hand, it is clear that the logarithm of
the Bergman kernels $\frac{1}{m}\log K_m$ are always
plurisubharmonic. Therefore at the end one can see that $\varphi$ is
plurisubharmonic. This implies that a rough converse of the
Ohsawa-Takegoshi extension theorem holds. Precisely we have the
following slightly stronger result.

\begin{thm}\label{thm:char of p.s.h ftn,intr}
Let $\varphi:D\rightarrow [-\infty,+\infty)$ be an upper
semicontinuous function on $D\subset \mc^n$ that is not identically
$-\infty$. Let $p>0$ be a fixed constant. If for any $z_0\in D$ with
$\varphi(z_0)>-\infty$ and any $m>0$, there is $f\in\mathcal O(D)$
such that $f(z_0)=1$ and
$$\int_D|f|^{p}e^{-m\varphi}\leq C_me^{-m\varphi(z_0)},$$
where $C_m$ are constants independent of $z_0$ and satisfying the
growth condition $\lim\limits_{m\ra\infty}\frac{1}{m}\log C_m=0,$
then $\varphi$ is plurisubharmonic.
\end{thm}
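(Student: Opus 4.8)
The plan is to mimic Demailly's regularization argument, but with an $L^p$ extremal function in place of the Bergman kernel, and crucially without assuming at the outset that $\varphi$ is plurisubharmonic. For each $m>0$ I would define the weighted extremal function
\[
K_m(z) = \sup\Big\{\,|f(z)|^p : f\in\mathcal O(D),\ \int_D |f|^p e^{-m\varphi}\le 1\,\Big\}.
\]
Writing $\log K_m(z)=\sup_f\, p\log|f(z)|$ over the admissible family $\mathcal F_m=\{f:\int_D|f|^pe^{-m\varphi}\le 1\}$ exhibits $\log K_m$ as a supremum of the plurisubharmonic functions $z\mapsto p\log|f(z)|$; the key point is that the constraint defining $\mathcal F_m$ does not involve $z$, so no hypothesis on $\varphi$ is needed here. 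I would first check that $\mathcal F_m$ is a normal family, so that $\log K_m$ is locally bounded above and its upper semicontinuous regularization $\psi_m:=\big(\tfrac1m\log K_m\big)^*$ is plurisubharmonic.

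Next I would establish the two-sided comparison $\psi_m\to\varphi$. For the lower bound, the hypothesis applied at a point $z_0$ with $\varphi(z_0)>-\infty$ produces $f$ with $f(z_0)=1$ and $\int_D|f|^pe^{-m\varphi}\le C_m e^{-m\varphi(z_0)}$; normalizing $f$ to lie in $\mathcal F_m$ gives $K_m(z_0)\ge e^{m\varphi(z_0)}/C_m$, hence $\psi_m(z_0)\ge\tfrac1m\log K_m(z_0)\ge\varphi(z_0)-\tfrac1m\log C_m$, and the growth condition $\tfrac1m\log C_m\to 0$ yields $\liminf_m\psi_m\ge\varphi$. For the upper bound I would use that $|f|^p$ is subharmonic, so the sub-mean value inequality over a small ball $B(z,\rho)\subset D$ together with the upper semicontinuity of $\varphi$ gives, for $f\in\mathcal F_m$, an estimate of the form $|f(z)|^p\le \mathrm{vol}(B_\rho)^{-1}\,e^{m\sup_{\overline{B(z,\rho)}}\varphi}$; passing to the regularization and then letting $\rho\to0$ produces $\limsup_m\psi_m\le\varphi$ and, simultaneously, a uniform (in $m\ge1$) local upper bound for the $\psi_m$.

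Combining the two bounds gives $\limsup_m\psi_m=\varphi$ with $(\psi_m)$ a sequence of plurisubharmonic functions that is locally uniformly bounded above. At this stage I would invoke the standard regularization theorem for plurisubharmonic families (see, e.g., \cite{Dem92}): the upper semicontinuous regularization $(\limsup_m\psi_m)^*$ is plurisubharmonic. Since $\limsup_m\psi_m=\varphi$, this says $\varphi^*$ is plurisubharmonic, and because $\varphi$ is assumed upper semicontinuous one has $\varphi^*=\varphi$; therefore $\varphi$ is plurisubharmonic.

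The main obstacle is the circularity one must avoid: the usual proof that a weighted Bergman or extremal kernel has plurisubharmonic logarithm presupposes a plurisubharmonic weight, which is exactly the conclusion sought. The resolution, and the technical heart of the argument, is to obtain the plurisubharmonicity of $\psi_m$ purely from the envelope structure (a supremum of $p\log|f|$ over a $z$-independent family) via upper semicontinuous regularization, and then to transfer plurisubharmonicity to the pointwise limit through the envelope theorem rather than through any continuity of the convergence — the upper semicontinuity of $\varphi$ being precisely what closes the gap between $\varphi$ and $\varphi^*$.
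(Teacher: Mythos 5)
Your proposal is correct and follows essentially the same route as the paper's proof: the same $L^p$ extremal function $K_m$, the same two-sided sandwich (lower bound from the extension hypothesis and the growth of $C_m$, upper bound from the sub-mean value inequality for $|f|^p$ and upper semicontinuity of $\varphi$), and the same passage to the limit through upper semicontinuous regularization. The only cosmetic differences are that the paper shows the extremal function is actually continuous by a normal-family argument (so $\log K_{m\varphi,p}$ is plurisubharmonic without regularizing at that stage) and concludes via the decreasing sequence $\bigl(\sup_{j\ge m}\varphi_j\bigr)^*$ rather than by citing the $\limsup$ envelope theorem directly.
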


\begin{rem}
It is worth to mention that Berndtsson proved that a continuous function $\varphi$ on a planar domain is subharmonic if
$e^{-m\varphi}$ can be used as a weight for H\"ormander's $L^2$-estimate for $\bar\partial$ \cite{Bob98}.
For the argument in \cite{Bob98}, dimension-one condition and continuity for $\varphi$ are necessary assumptions.
It seems interesting to generalize Berndtsson's result to higher dimensions and upper semi-continuous functions.
\end{rem}

Our second main observation is relating Theorem \ref{thm:char of p.s.h ftn,intr}
with positivity of direct image sheaves of twisted relative canonical boundles,
which is also inspired by Demailly's regularization of p.s.h. functions.
This observation comes from a new geometric interpretation of Theorem \ref{thm:char of p.s.h ftn,intr} as follows.

Let $D$ and $\varphi$ be as in Theorem \ref{thm:char of p.s.h ftn,intr}.
We view $e^{-\varphi}$ as a (singular) hermitian metric on the trivial line bundle $L=D\times \mc$.
Let $\pi=Id:D\ra D'=D$ be the trivial fibration with each fiber being one single point.
It is obvious that the associated twisted relative canonical bundle $K_{D/D'}\otimes L$ is isomorphic to $L$
and its direct image $L':=\pi_*(K_{D/D'}\otimes L)$ is also canonically isomorphic to $L$.
The Hodge-type metric on $L'$, denoted by $e^{-\varphi'}$, is given by integration along fibers.
In the case under discussion, since all fibers are single points,
the Hodge-type metric on $L'$ is given by valuations of the norms of the relative sections.
It is obvious that $\varphi'=\varphi$.
Therefore Theorem 1.1 implies that:
if $(mL,e^{-m\varphi})$ satisfies the Ohsawa-Takegoshi extension theorem for all $m\geq 1$,
then the direct image $\pi_*(K_{D/D'}\otimes L)$ is positively curved with respect to the Hodge-type metric.
This simple observation leads to the expectation that certain extension property should imply positivity of direct image sheaves.

In connection to this direction,
we give a characterization of Griffiths positivity of holomorphic vector bundles,
as a generalization of Theorem \ref{thm:char of p.s.h ftn,intr}.
We first introduce the notion of \emph{multiple $L^p$-extension property} for holomorphic vector bundles
with singular Finsler metrics.

\begin{defn}[Multiple $L^p$-extension property]\label{def-intr:multiple extension prop}
Let $(E,h)$ be a holomorphic vector bundle over a bounded domain
$D\subset\mc^n$ equipped with a singular Finsler metric $h$. Let
$p>0$ be a fixed constant. Assume that for any $z\in D$, any nonzero element $a\in E_{z}$
with finite norm $|a|$, and any $m\geq 1$, there is a holomorphic
section $f_m$ of $E^{\otimes m}$ on $D$ such that $f_m(z)=a^{\otimes
m}$ and satisfies the following estimate:
$$\int_D|f_m|^{p}\leq C_m|a^{\otimes{m}}|^{p}=C_m |a|^{mp},$$
where $C_m$ are constants independent of $z$ and satisfying the
growth condition $\frac{1}{m}\log C_m\ra 0$ as $m\ra\infty$. Then
$(E, h)$ is said to have multiple $L^p$-extension property.
\end{defn}

The following theorem says that multiple $L^p$-extension property for some $p>0$ implies Griffiths positivity.

\begin{thm}\label{thm-intr:cha of positive bundle}
Let $(E,h)$ be a holomorphic vector bundle over a bounded domain
$D\subset\mc^n$ equipped with a singular Finsler metric $h$, such
that the norm of any local holomorphic section of $E^*$ is upper
semicontinuous. If $(E,h)$ has multiple $L^p$-extension property for
some $p>0$, then $(E,h)$ is positively curved in the sense of
Griffiths, namely $\log|u|$ is plurisubharmonic for any local
holomorphic section $u$ of $E^*$.
\end{thm}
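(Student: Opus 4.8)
The plan is to reduce Theorem \ref{thm-intr:cha of positive bundle} to the scalar statement of Theorem \ref{thm:char of p.s.h ftn,intr} by pairing the bundle-valued extensions against tensor powers of the given cosection. Since plurisubharmonicity is a local property, it suffices to fix a local holomorphic section $u$ of $E^*$ over an open set $U\subseteq D$ with $u\not\equiv 0$, a ball $B\Subset U$, and to show that $\varphi:=\log|u|$ is plurisubharmonic on $B$. By hypothesis $|u|$ is upper semicontinuous, so $\varphi$ is upper semicontinuous with values in $[-\infty,+\infty)$ and is not identically $-\infty$; thus we are in the setting of Theorem \ref{thm:char of p.s.h ftn,intr}, and only its extension hypothesis must be verified at each $z_0\in B$ with $\varphi(z_0)>-\infty$, that is, with $u(z_0)\neq 0$.

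First I would fix such a $z_0$ and, using the definition of the dual Finsler norm, choose $a\in E_{z_0}$ with $|a|=1$ realizing the pairing, so that $\langle a,u(z_0)\rangle=|u(z_0)|$. Applying the multiple $L^p$-extension property (Definition \ref{def-intr:multiple extension prop}) to $a$ produces, for each $m\geq 1$, a holomorphic section $f_m$ of $E^{\otimes m}$ on $D$ with $f_m(z_0)=a^{\otimes m}$ and $\int_D|f_m|^{p}\leq C_m|a|^{mp}=C_m$. Pairing against $u^{\otimes m}$ gives the scalar holomorphic function $g_m:=\langle f_m,u^{\otimes m}\rangle$ on $U$. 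The two key pointwise facts are the duality inequality $|g_m|\leq |f_m|\,|u|^{m}$ (from $|\langle \cdot,\cdot\rangle|\leq |\cdot|\,|\cdot|_{h^*}$ together with $|u^{\otimes m}|_{h^*}=|u|^{m}$) and the value $g_m(z_0)=\langle a,u(z_0)\rangle^{m}=|u(z_0)|^{m}$. Normalizing $h_m:=g_m/g_m(z_0)$, so that $h_m(z_0)=1$, and integrating the pointwise bound $|g_m|^p|u|^{-mp}\leq |f_m|^p$ over $B\subset D$ yields $\int_B|h_m|^{p}|u|^{-mp}\leq C_m\,|u(z_0)|^{-mp}$.

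Setting $\widetilde m:=pm$ and $\widetilde C_{\widetilde m}:=C_m$, the last estimate reads $\int_B|h_m|^{p}e^{-\widetilde m\varphi}\leq \widetilde C_{\widetilde m}\,e^{-\widetilde m\varphi(z_0)}$ with $h_m(z_0)=1$, and the growth condition transfers since $\tfrac{1}{\widetilde m}\log\widetilde C_{\widetilde m}=\tfrac{1}{p}\cdot\tfrac{1}{m}\log C_m\to 0$. This is exactly the hypothesis of Theorem \ref{thm:char of p.s.h ftn,intr} on the ball $B$ (a bounded pseudoconvex domain), along the sequence $\widetilde m\in\{p,2p,3p,\dots\}$; since Demailly's Bergman-kernel argument underlying that theorem only requires a sequence $\widetilde m\to\infty$, this suffices to conclude that $\varphi=\log|u|$ is plurisubharmonic on $B$, and hence on $U$ by locality.

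The main obstacle I anticipate is the fiberwise realization of the dual norm: the normalization by $g_m(z_0)$ must be lossless, i.e.\ one genuinely needs $|\langle a,u(z_0)\rangle|=|u(z_0)|$ rather than merely $(1-\epsilon)|u(z_0)|$, because any multiplicative loss gets raised to the power $mp$ and would contribute the nonzero term $-\log(1-\epsilon)$ to $\tfrac{1}{\widetilde m}\log\widetilde C_{\widetilde m}$, destroying the growth condition. I would therefore need to check that for the singular Finsler metric $h$ the supremum defining $|u(z_0)|$ is attained by a finite-norm vector $a$ (which holds when the fiber norm is a genuine lower semicontinuous norm on the finite-dimensional fiber, and must be arranged off a small exceptional set otherwise), together with the validity in the singular setting of the duality and tensor-power identities $|\langle\cdot,\cdot\rangle|\leq|\cdot|\,|\cdot|_{h^*}$ and $|u^{\otimes m}|_{h^*}=|u|^{m}$.
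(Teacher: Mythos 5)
Your proposal is correct and follows the same overall strategy as the paper: pair the tensor-power extension $f_m$ of $a^{\otimes m}$ against $u^{\otimes m}$ and reduce to the scalar characterization (Theorem \ref{thm:char of p.s.h ftn,intr}). The one genuine difference is in the final step. The paper does not normalize $g_m=\langle u^{\otimes m},f_m\rangle$ directly; instead it invokes the Ohsawa--Takegoshi theorem a second time to produce an auxiliary holomorphic function with value $1$ at $z_0$ and weight $e^{-p\log|g_m|}$, and then combines this with $\int_D|f_m|^p\leq C_m$ via the Cauchy--Schwarz inequality to land in the hypothesis of the scalar theorem with exponent $\tfrac{p}{2}m$. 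Your route --- setting $h_m=g_m/g_m(z_0)$ and integrating the pointwise duality bound $|g_m|^p\leq|f_m|^p|u|^{mp}$ over a ball --- reaches the same hypothesis with exponent $pm$ while avoiding the extra extension step and any pseudoconvexity requirement beyond restricting to a ball; this is a genuine (if modest) simplification. Your observation that the scalar theorem is only applied along the arithmetic sequence $\widetilde m\in p\mathbb{N}$ is also correct and is implicitly present in the paper's version as well (there the exponents run over $\tfrac{p}{2}\mathbb{N}$), since the sup-and-regularize argument in the proof of Theorem \ref{thm:cha. of p.s.h function} only needs a sequence tending to infinity.

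On the caveat you raise about attainment of the dual norm: the paper simply asserts the existence of $a$ with $|a|=1$ and $\langle u(z_0),a\rangle=|u(z_0)|$ without discussion, so you are being more careful than the source. Note, however, that exact attainment is not actually needed: since a fresh choice of $a$ is made for each $m$, one may take $a_m$ with $|\langle u(z_0),a_m\rangle|\geq(1-\epsilon_m)|u(z_0)|$ and $\epsilon_m\to0$, which inflates $\widetilde C_{\widetilde m}$ by $(1-\epsilon_m)^{-mp}$ and still satisfies the growth condition because $\tfrac{1}{pm}\log(1-\epsilon_m)^{-mp}=-\log(1-\epsilon_m)\to0$. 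So the obstacle you flag is real only if $\epsilon$ is held fixed, and is easily dispatched.
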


The reader is referred to  \S \ref{sec:metric on sheaf} for the
definitions of singular Finsler metrics and dual Finsler metrics,
and \S \ref{subsec:char. of positive v.b.} for the definition of
Finsler metrics on tensor products of vector bundles.

\begin{rem}
Some remarks about Theorem \ref{thm-intr:cha of positive bundle}:
\begin{itemize}
\item Although this theorem is stated and proved for vector bundles of finite rank, the same argument also works for holomorphic vector bundles of infinite rank;
\item In general a holomorphic vector bundle has certain positivity property if it has a lot of holomorphic sections.
 However the points here are different. Firstly the nature of this theorem is completely local,
and secondly our aim here is not to show that $E$ admits some positively curved metric,
instead our aim is to show that the metric $h$ itself is positively curved;
\item It seems that multiple $L^p$-extension property is stronger than Griffiths positivity,
and possibly it is more or less equivalent to Nakano positivity.
\end{itemize}
\end{rem}

The third observation in the present paper is introducing
fiber-product powers of holomorphic families. Motivated by
Demailly's method to regularization of p.s.h functions, it is
natural to expect that considering the tensor product powers  of a
line bundle twisting the relative canonical bundle may enable us to
prove the positivity of the direct image of the twisted relative
canonical bundles. However, this is not powerful enough when we
study families of complex manifolds with positive dimensional fibers
.

In this paper, we consider fiber-product powers of holomorphic
families, as well as tensor product powers of the involved line
bundle. Recall that for a holomorphic map $p:X\ra Y$ between complex
manifolds, the $m$-th power of fiber product is defined to be
$$X\times_Y\cdots\times_Y X:=\{(x_1,\cdots, x_m)\in X^m; p(x_1)=\cdots=p(x_m)\}.$$
This observation is inspired by Berndtsson's work on minimum
principle for p.s.h functions \cite{Bob98}.\\

We now discuss applications of the above observations to
plurisubharmonic variation of Bergman kernels and Griffiths
positivity of direct image sheaves associated to holomorphic
families of complex manifolds. The main tool in the arguments is an
Ohsawa-Takegoshi type $L^2$ extension theorem of the following form:

\begin{thm}\label{thm-intr:Demailly OT}
Let $(X,\omega)$ be a weakly pseudoconvex K\"{a}hler manifold
and $L$ be a holomorphic line bundle over $X$ with a (singular) hermitian metric $h$.
Let $s:X\ra\mc^r$ be a holomorphic map such that $0\in\mc^r$ is not a critical value of $s$.
Assume that the curvature current of $(L,h)$ is semi-positive and $|s(x)|\leq M$ for some constant $M$.
Let $Y=s^{-1}(0)$ be the zero set of $s$.
Then for every holomorphic section $f$ of $K_X\otimes L$ over $Y$
such that  $\int_Y|f|^2|\Lambda^r(ds)|^{-2}dV_\omega<+\infty$,
there exists a holomorphic section $F$ of  $K_X\otimes L$ over $X$
such that $F|_Y=f$ and
\begin{align*}
\int_X|F|_L^2dV_{X,\omega}\leq
C_{r,M}\int_Y\frac{|f|_L^2}{|\Lambda^r(ds)|^2}dV_{Y,\omega}
\end{align*}
where $C_{r,M}$ is a constant depending only on $r$ and $M$.
\end{thm}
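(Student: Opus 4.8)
The plan is to follow the Ohsawa--Takegoshi--Manivel--Demailly scheme, reducing the extension problem to solving a $\bar\partial$-equation with a carefully chosen weight and then applying a twisted $L^2$-estimate. First I would reduce to the case of a relatively compact base: since $(X,\omega)$ is weakly pseudoconvex it carries a smooth plurisubharmonic exhaustion, so it suffices to produce extensions with uniform bounds over the sublevel sets and extract a weak limit at the end. I would also regularize $h$ so that its curvature is smooth and semi-positive on each sublevel set, recovering the singular case by monotone limits.

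Next, construct a smooth reference extension $\tilde f$ of $f$. Because $0$ is not a critical value of $s$, the map $s$ is a submersion near $Y=s^{-1}(0)$, so $f$ extends locally; gluing by a partition of unity yields a smooth $K_X\otimes L$-valued section $\tilde f$ with $\tilde f|_Y=f$ and with $\bar\partial\tilde f$ vanishing along $Y$. The goal is then to find $u$ with $\bar\partial u=\bar\partial\tilde f$ and $u|_Y=0$, so that $F=\tilde f-u$ is the desired holomorphic extension; the vanishing $u|_Y=0$ will be forced by integrability against a weight that is singular along $Y$.

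The analytic heart is the twisted Bochner--Kodaira--Nakano inequality: for suitable bounded positive functions $\eta$ and an auxiliary positive $\lambda$,
\[
\int_X(\eta+\lambda)|\bar\partial^\ast v|^2\,dV+\int_X\eta|\bar\partial v|^2\,dV\ \ge\ \int_X\big\langle[\eta\,i\Theta_h-i\partial\bar\partial\eta-i\lambda^{-1}\partial\eta\wedge\bar\partial\eta]\,\Lambda v,v\big\rangle\,dV .
\]
I would build the weight out of $\chi(\log|s|^2)$ for a well-chosen convex $\chi$ and set $\eta,\lambda$ in terms of it, so that the singular term $i\partial\bar\partial(-\log|s|^2)$ produces exactly the curvature contribution needed to dominate $\bar\partial\tilde f$, while the semi-positivity of $i\Theta_h$ lets me discard the curvature of $L$. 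Solving $\bar\partial u=\bar\partial\tilde f$ with the resulting weighted estimate and letting the regularization parameter $\varepsilon\to 0$, the fiber-integration (coarea) formula converts the boundary contribution concentrated on $\{|s|=\varepsilon\}$ into $\int_Y|f|^2|\Lambda^r(ds)|^{-2}\,dV_{Y,\omega}$, while the hypothesis $|s|\le M$ confines all the auxiliary integrals to $\{|s|\le M\}$, which is what forces the final constant $C_{r,M}$ to depend only on $r$ and $M$.

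I expect the main obstacle to be the simultaneous control of three things in the singular limit: keeping the weighted norm of $u$ bounded (so that $F\in L^2$), guaranteeing that $u$ genuinely restricts to $0$ on $Y$ rather than merely being small, and tracking the constant so that it stays independent of the point and of the smoothing of $h$. The codimension-$r$ bookkeeping---in particular the appearance of $|\Lambda^r(ds)|^{-2}$ and the verification that $\chi$ can be chosen uniformly in $r$ and $M$---is the delicate part where I would spend most of the effort; the passage to the weak limit over the exhaustion and the removal of the regularization of $h$ are then routine Montel and diagonal arguments.
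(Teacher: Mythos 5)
First, a point of comparison: the paper does not prove this theorem at all. It is quoted as a known result, with Remark \ref{rem-intr:OT with singular metric} attributing the pseudoconvex-domain case to Ohsawa--Takegoshi, the K\"ahler case with smooth $h$ to Manivel and Demailly, and the K\"ahler case with a singular metric (the form actually stated here) to Cao and to Zhou--Zhu. So your sketch can only be measured against that literature, and there it does follow the standard Ohsawa--Takegoshi--Manivel--Demailly scheme: a smooth extension supported near $Y$, the twisted Bochner--Kodaira--Nakano inequality with a weight built from $\chi(\log|s|^2)$, a $\bar\partial$-solution forced to vanish on $Y$ by a weight singular along $Y$, and a weak limit over the exhaustion. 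That outline is correct for the smooth-metric case.

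The genuine gap is in the sentence ``I would also regularize $h$ so that its curvature is smooth and semi-positive on each sublevel set, recovering the singular case by monotone limits.'' On a pseudoconvex domain in $\mc^n$ (or a Stein manifold) this works, because a plurisubharmonic weight is a decreasing limit of smooth plurisubharmonic weights. On a weakly pseudoconvex K\"ahler manifold it fails: a singular metric whose curvature current is semi-positive cannot in general be approximated by smooth metrics with semi-positive curvature; Demailly's regularization unavoidably loses positivity, with the loss controlled by Lelong numbers, and that loss is not compatible with the twisted estimate unless it is compensated by additional arguments. This is exactly why the singular-metric K\"ahler statement remained open long after Manivel and Demailly, and why Cao and Zhou--Zhu had to introduce new techniques (approximation by metrics with analytic singularities together with uniform estimates surviving the loss of positivity, respectively the method of undetermined functions). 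Your closing claim that ``the removal of the regularization of $h$'' is a ``routine Montel and diagonal argument'' therefore conceals the actual mathematical content of the theorem in the generality stated. A secondary, more minor point: the reference extension should be cut off at scale $\varepsilon$ around $Y$, say $\theta(|s|^2/\varepsilon^2)\tilde f_0$, so that $\bar\partial\tilde f$ is concentrated on the shell $\{|s|\sim\varepsilon\}$ rather than ``vanishing along $Y$''; it is this concentration that makes the coarea limit produce the factor $\int_Y|f|^2|\Lambda^r(ds)|^{-2}\,dV_{Y,\omega}$ on the right-hand side.
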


\begin{rem}\label{rem-intr:OT with singular metric}
For Theorem \ref{thm-intr:Demailly OT}, the case that $X$ is a
pseudoconvex domain is proved by Ohsawa and Takegoshi in \cite{OT1}.
A geometric presentation in the case that $X$ is K\"ahler and $h$ is
smooth  was given by Manivel \cite{Man93} and Demailly \cite{Dem00}.
Recently, after the works by Blocki (\cite{Bl13}) and Guan-Zhou
(\cite{GZh12}, \cite{GZh15}, \cite {GZh15d}), the optimal $L^2$
extension in the setting of pseudoconvex K\"{a}hler manifolds presented as
above was proved by Cao \cite{Cao141} and Zhou-Zhu \cite{ZZ} with an
optimal estimate of the constant $C_{r,M}$. In this paper, we also
need an $L^p(0<p<2)$ variant of Theorem \ref{rem-intr:OT with
singular metric} due to Berndtsson-P\u{a}un \cite{BP10}.
\end{rem}

In \cite{GZh15d} (see also \cite{GZh17}) Guan and Zhou observed a
connection between positivity of the twisted relative canonical
bundles and Ohsawa-Takegoshi type extension, by showing that
plurisubharmonic variation of the relative Bergman kernels proved in
\cite{Bob06} and \cite{BP08} and Griffiths positivity of the twisted
relative canonical bundles \cite{Bob09a} can be deduced from Theorem
\ref{thm-intr:Demailly OT} with optimal estimate. On the other hand,
Berndtsson and Lempert \cite{BL16} show that Theorem
\ref{thm-intr:Demailly OT} with optimal estimate in the case of
pseudoconvex domains can be deduced from Berndtsson's result on
positivity of direct image of the twisted relative canonical bundles \cite{Bob09a}.

Roughly speaking, by the works of Guan-Zhou and Berndtsson-Lempert,
the Ohsawa-Takegoshi type extension theorems with optimal estimate
and the positivity of the twisted relative canonical bundle are
equivalent. Recently, based on Guan-Zhou's above observation, Hacon,
Popa and Schnell \cite{HPS16}, and P\u{a}un and Takayama \cite{PT18},
and Zhou and Zhu \cite{ZZ18} showed that, for a projective and
K\"ahler family of manifolds, the positivity of direct images of the
twisted relative pluricanonical bundles can be deduced from Theorem
\ref{thm-intr:Demailly OT} with optimal estimate.

Note that the Ohsawa-Takegoshi extension theorem with optimal
estimate and the positivity of direct image sheaves are equivalent,
it seems to be interesting that the positivity of direct image
sheaves could be deduced from  Theorem \ref{thm-intr:Demailly OT}
without optimal estimate. We should remark that the curvature
positivity property involves the mean value inequality, which is an
exact inequality, while the estimate in Theorem
\ref{thm-intr:Demailly OT} contains a constant which is not exact.
\\

In this paper, we'll present a different approach to the Griffiths
positivity of direct image sheaves based on the new
characterizations of p.s.h. functions , together with Theorem \ref{thm-intr:Demailly OT}.
The key insight is Theorem \ref{thm:char of p.s.h ftn,intr}, which says
that the submean value inequality can be deduced from a multiple
extension property, where nonexact constants are allowed provided
they satisfy certain growth condition.

The first applications of the above observations is to plursubharmonic variation
of $m$-Bergman kernels associated to a family of pseudoconvex domains.


Let $\Omega\subset\mc^r_t\times\mc^n_z$ be a pseudoconvex domain
and let $p:\Omega\ra U:=p(\Omega)\subset\mc^r$ be the natural projection.
Let $\varphi(t,z)$ be a p.s.h. function on $\Omega$.
Let $\Omega_t:=p^{-1}(t)$ $(t\in U)$ be the fibers.
We also denote by $\varphi_t(z)=\varphi(t,z)$ the restriction of $\varphi$ on $\Omega_t$.
Let $m$ be a postive integer, and let $K_{m,t}(z)$ be the $m$-Bergman kernel on $\Omega_t$ with wight $e^{-\varphi_t}$
(see \S \ref{subsec:m-Bergman kernel} for definition).
When $m=1$, $K_{1,t}(z)$ is the ordinary relative weighted Bergman kernel.
As $t$ varies,  $K_{m,t}(z)$ gives a function on $\Omega$.

\begin{thm}\label{thm-intr:psh variation of m Bergman kernel}
The function $\log K_{m,t}(z)$ is plurisubharmonic on $\Omega$.
\end{thm}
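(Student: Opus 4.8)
The plan is to verify that $\psi(t,z):=\log K_{m,t}(z)$ satisfies the hypotheses of Theorem \ref{thm:char of p.s.h ftn,intr} on $\Omega$ with the exponent $p=2/m$, so that $\psi$ is plurisubharmonic. Two facts must be recorded before the construction. First, the elementary \emph{reproducing inequality} built into the extremal definition of the $m$-Bergman kernel: by scaling,
\[
|h(w)|^2\le K_{m,t}(w)\Big(\int_{\Omega_t}|h|^{2/m}e^{-\varphi_t}\Big)^m,\qquad h\in\mathcal O(\Omega_t),\ w\in\Omega_t .
\]
Second, that $\psi$ is upper semicontinuous and $\not\equiv-\infty$, which is the one hypothesis of Theorem \ref{thm:char of p.s.h ftn,intr} not produced by the extension construction. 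I would obtain upper semicontinuity by a normal families argument: the $L^{2/m}$-extremal functions attached to a sequence $(t_j,z_j)\to(t_0,z_0)$ have uniformly bounded norm, hence a locally uniform subsequential limit on $\Omega_{t_0}$, and Fatou's lemma together with the lower semicontinuity of $e^{-\varphi}$ (as $\varphi$ is plurisubharmonic, hence upper semicontinuous) gives $\limsup K_{m,t}(z)\le K_{m,t_0}(z_0)$.

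Now fix $(t_0,z_0)\in\Omega$ with $\psi(t_0,z_0)>-\infty$ and a positive integer $M$, and set $N=Mm$. The core of the argument is to pass to the $N$-fold fiber product $\widetilde\Omega:=\Omega\times_U\cdots\times_U\Omega\subset\mc^r\times(\mc^n)^N$, a pseudoconvex domain with fibers $\widetilde\Omega_t=\Omega_t^N$ carrying the plurisubharmonic weight $\widetilde\varphi(t,z^{(1)},\dots,z^{(N)})=\sum_{j=1}^N\varphi(t,z^{(j)})$. Choose $g_0\in\mathcal O(\Omega_{t_0})$ with $g_0(z_0)=1$ and $\int_{\Omega_{t_0}}|g_0|^{2/m}e^{-\varphi_{t_0}}\le(1+M^{-2})K_{m,t_0}(z_0)^{-1/m}$, and form $\widetilde g_0=\prod_{j=1}^N g_0(z^{(j)})$ on $\widetilde\Omega_{t_0}$, so that $\widetilde g_0$ equals $1$ at the diagonal point and $\int_{\widetilde\Omega_{t_0}}|\widetilde g_0|^{2/m}e^{-\widetilde\varphi_{t_0}}\le(1+M^{-2})^N K_{m,t_0}(z_0)^{-M}$. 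Applying the $L^{2/m}$ version of Theorem \ref{thm-intr:Demailly OT} on $\widetilde\Omega$, with $s=t-t_0$ cutting out $\widetilde\Omega_{t_0}$ and the semipositive weight $e^{-\widetilde\varphi}$, extends $\widetilde g_0$ to $\widetilde F\in\mathcal O(\widetilde\Omega)$ with $\int_{\widetilde\Omega}|\widetilde F|^{2/m}e^{-\widetilde\varphi}\le C_r(1+M^{-2})^N K_{m,t_0}(z_0)^{-M}$; here the codimension is $r$ and $|s|\le\operatorname{diam}(U)$ regardless of $N$, so $C_r$ does not depend on $M$.

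Restricting to the relative diagonal via $\delta(t,z)=(t,z,\dots,z)$ produces $F:=\delta^*\widetilde F\in\mathcal O(\Omega)$ with $F(t_0,z_0)=1$. To estimate $F$ on each fiber I use the product inequality $|g(z,\dots,z)|^2\le K_{m,t}(z)^N\big(\int_{\Omega_t^N}|g|^{2/m}e^{-\widetilde\varphi_t}\big)^m$ for $g\in\mathcal O(\Omega_t^N)$. Rewriting the single-variable inequality in the linear form $|h(z)|^{2/m}\le K_{m,t}(z)^{1/m}\int_{\Omega_t}|h|^{2/m}e^{-\varphi_t}$, which is linear in the measure $|h|^{2/m}e^{-\varphi_t}$, the product inequality follows by induction on $N$: evaluate one variable at a time and integrate, invoking the inductive hypothesis under the integral sign for each fixed value of the remaining variable, so that no convexity of $t\mapsto t^{2/m}$ is needed and the argument is valid for every $m\ge1$. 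Writing $A_t:=\int_{\Omega_t^N}|\widetilde F(t,\cdot)|^{2/m}e^{-\widetilde\varphi_t}$, this gives $|F(t,z)|^{2/m}\le K_{m,t}(z)^{N/m}A_t$, and the choice $N=Mm$ makes the power cancel exactly, $|F(t,z)|^{2/m}K_{m,t}(z)^{-M}\le A_t$. Integrating over $\Omega$ and using $\operatorname{vol}(\Omega_t)\le V_0$ (as $\Omega$ is bounded) together with Fubini,
\[
\int_\Omega|F|^{2/m}e^{-M\psi}=\int_\Omega|F|^{2/m}K_{m,t}^{-M}\le V_0\int_{\widetilde\Omega}|\widetilde F|^{2/m}e^{-\widetilde\varphi}\le V_0\,C_r(1+M^{-2})^N K_{m,t_0}(z_0)^{-M},
\]
so the hypothesis of Theorem \ref{thm:char of p.s.h ftn,intr} holds with $C_M=V_0C_r(1+M^{-2})^{Mm}$, which is bounded in $M$; hence $\tfrac1M\log C_M\to0$ and $\psi$ is plurisubharmonic.

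The main obstacle is precisely the growth condition on $C_M$. The fiber-product degree $N=Mm$ grows linearly in $M$, so any multiplicative error incurred \emph{per factor} is raised to the power $N$; a fixed near-extremal tolerance $1+\varepsilon$ would yield $C_M\sim(1+\varepsilon)^{Mm}$ and fail the growth condition, which is why $g_0$ must be taken extremal up to a factor $1+M^{-2}$, and why it is essential that the Ohsawa--Takegoshi constant in Theorem \ref{thm-intr:Demailly OT} depends only on $r$ and $|s|$ and not on the ambient dimension or on $N$. The only other delicate point is the upper semicontinuity of $\log K_{m,t}$ discussed above; everything else reduces to the exact power bookkeeping $N=Mm$ and the inductive product inequality, both of which are elementary once the fiber-product reformulation is in place.
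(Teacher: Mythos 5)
Your proposal is correct in substance and follows essentially the same route as the paper's proof: upper semicontinuity by a normal-families argument, fiber-product powers of $\Omega$, the $L^{2/m}$ Ohsawa--Takegoshi extension over the base with a constant depending only on the codimension $r$ and $\operatorname{diam}(U)$, the product property of $m$-Bergman kernels to control the restriction to the relative diagonal, and finally the characterization Theorem \ref{thm:char of p.s.h ftn,intr}. Two packaging differences are worth recording. First, the paper restricts $\psi_m$ to graphs of holomorphic sections $s:U\to\Omega$ and applies the characterization theorem on the base $U$, with weight $\psi_m(t,s(t))/m$ and the fiber-product degree $k$ serving as the multiplier; you apply it directly on the total space with multiplier $M$ and degree $N=Mm$. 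Your bookkeeping is sound, and your concern that a fixed near-extremal tolerance would be raised to the power $N$ and destroy the growth condition is exactly right --- the paper sidesteps this by using exact minimizers (which exist by the same normal-families argument), while your $1+M^{-2}$ tolerance works equally well.

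The one point that needs repair is the final integration. Because you verify the hypothesis of Theorem \ref{thm:char of p.s.h ftn,intr} on all of $\Omega$, you must integrate over each fiber and you invoke $\operatorname{vol}(\Omega_t)\le V_0$; but the theorem does not assume $\Omega$ or its fibers are bounded (the paper only normalizes $U$ to be bounded, which is harmless since shrinking the base does not change the fibers, whereas shrinking the fibers would change $K_{m,t}$). The paper's graph trick produces an integral over the bounded base $U$ alone and never meets this issue. The fix is immediate: plurisubharmonicity is local, and your pointwise inequality $|F(t,z)|^{2/m}K_{m,t}(z)^{-M}\le A_t$ holds at every point of $p^{-1}(U')$ for $U'\Subset U$, so you should verify the hypothesis of Theorem \ref{thm:char of p.s.h ftn,intr} on a bounded polydisc $B\Subset\Omega$ (whose $z$-slices have uniformly bounded volume) rather than on $\Omega$ itself; the constant $V_0$ is then replaced by the volume of the slice of $B$ and the growth condition is unaffected. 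With that localization inserted, your argument is complete.
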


The method in the proof of Theorem \ref{thm-intr:psh variation of m
Bergman kernel} can also be applied to the case of families of
compact K\"ahler manifolds (see Theorem \ref{thm:p.s.h variation
Berg compact} and Remark \ref{rem:about boundedness of wt}).\\

The second application of the observations is to prove positivity of certain direct image sheaves.
We first consider the case of families of pesudoconvex domains.

Let $U, D$ be bounded pseudoconvex domains in $\mc^r$ and $\mc^n$ respectively,
and let $\Omega=U\times D\subset\mc^r\times\mc^n$.
Let $\varphi$ be a p.s.h function on $\Omega$,
which is assumed to be bounded.
For $t\in U$, let $D_t=\{t\}\times D$ and $\varphi_t(z)=\varphi(t,z)$.
Let $E_t=H^2(D_t, e^{-\varphi_t})$ be the space of $L^2$ holomorphic functions on $D_t$ with respect to the weight $e^{-\varphi_t}$.
Then $E_t$ are Hilbert spaces with the natural inner product.
Since $\varphi$ is assumed to be bounded on $\Omega$, all $E_t$ for $t\in U$ are equal as vector spaces.
However, the inner products on $E_t$ depend on $t$ if $\varphi(t,z)$ is not constant with $t$.
So, under the natural projection,  $E=\coprod_{t\in\Delta}E_t$ is a trivial holomorphic vector bundle (of infinite rank) over $U$
with varying Hermitian metric.

In \cite{Bob09a}, Berndtsson proved that $E$ is semipositive in the
sense of Griffths, namely, for any local holomorphic section $\xi$
of the dual bundle $E^*$, the function $\log|\xi|$ is
plurisubharmonic (indeed Berndtsson proved a stronger result which
says that $E$ is semipositive in the sense of Nakano). The aim here
is to apply the above observations to present a new proof of
Berntdsson's result. The argument in \cite{Bob09a} involves taking
derivatives and hence $\varphi$ is assumed to be smooth up to the
boundary of $\Omega$. In the following Theorem
\ref{thm-intr:Positivity of hodge domain case}, smoothness for
$\varphi$ is not necessary.

\begin{thm}\label{thm-intr:Positivity of hodge domain case}
The vector bundle $E$ is semipositive in the sense of Griffths.
\end{thm}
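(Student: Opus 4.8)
The plan is to deduce the result directly from the characterization of Griffiths positivity in Theorem~\ref{thm-intr:cha of positive bundle}: it suffices to equip $E$ with its natural Hermitian (hence Finsler) metric and to verify that $(E,h)$ enjoys the multiple $L^2$-extension property of Definition~\ref{def-intr:multiple extension prop}, so I take $p=2$. The engine producing the required sections will be the fiber-product power construction together with the Ohsawa--Takegoshi extension theorem. Concretely, since $\Omega=U\times D$, the $m$-th fiber product of $p:\Omega\ra U$ over $U$ is $\Omega^{(m)}=U\times D^m\subset\mc^r\times\mc^{nm}$, a bounded pseudoconvex domain, carrying the weight $\Phi^{(m)}(t,z_1,\dots,z_m)=\sum_{i=1}^m\varphi(t,z_i)$, which is plurisubharmonic as a sum of pullbacks of $\varphi$. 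Under the Hilbert tensor product I identify the fiber $E_t^{\otimes m}$ with $H^2(D^m,e^{-\Phi^{(m)}_t})$, where $\Phi^{(m)}_t(z)=\sum_i\varphi_t(z_i)$, in such a way that the pure tensor $a^{\otimes m}$ corresponds to the function $(z_1,\dots,z_m)\mapsto a(z_1)\cdots a(z_m)$ and the tensor metric gives $|a^{\otimes m}|=|a|^m$.

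Fix $t_0\in U$ and a nonzero $a\in E_{t_0}$ of finite norm. I would apply Theorem~\ref{thm-intr:Demailly OT} on $X=\Omega^{(m)}$ with $L$ the trivial line bundle metrized by $e^{-\Phi^{(m)}}$ and with $s(t,z)=t-t_0$, whose zero set is $Y=\{t_0\}\times D^m\cong D^m$. The map $s$ is a submersion (so $0$ is a regular value), $|s|\leq M:=\mathrm{diam}(U)$ is bounded, $|\Lambda^r(ds)|\equiv 1$, and the curvature of $(L,e^{-\Phi^{(m)}})$ is semipositive; thus all hypotheses hold. Extending the section $a^{\otimes m}$, viewed as a section of $K_X\otimes L$ over $Y$ through the natural adjunction isomorphism, produces a holomorphic $F$ on $\Omega^{(m)}$ with $F|_Y=a^{\otimes m}$ and
\begin{align*}
\int_{\Omega^{(m)}}|F|^2e^{-\Phi^{(m)}}\,dV\leq C_{r,M}\int_{D^m}|a^{\otimes m}|^2e^{-\Phi^{(m)}_{t_0}}\,dV=C_{r,M}\Big(\int_D|a|^2e^{-\varphi_{t_0}}\Big)^m=C_{r,M}\,|a|^{2m}.
\end{align*}
The decisive point is that the codimension of $Y$ in $X$ equals $r$ for every $m$, so the constant $C_{r,M}$ depends only on $r$ and $M$ and is \emph{independent of} $m$; the growing fiber dimension $nm$ never enters the estimate.

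Finally I would reinterpret $F$ as a section $f_m$ of $E^{\otimes m}$ over $U$: Fubini shows $F_t\in E_t^{\otimes m}$ for a.e.\ $t$, and holomorphicity of $F$ in $t$ upgrades this to a genuine holomorphic section of the Hilbert bundle $E^{\otimes m}$, with $f_m(t_0)=a^{\otimes m}$ and $\int_U|f_m|^2\,dV(t)=\int_{\Omega^{(m)}}|F|^2e^{-\Phi^{(m)}}\,dV\leq C_{r,M}|a|^{2m}$. Hence the multiple $L^2$-extension property holds with $C_m=C_{r,M}$, which trivially satisfies $\tfrac{1}{m}\log C_m\ra 0$. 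After checking the standing hypothesis of Theorem~\ref{thm-intr:cha of positive bundle} that the norm of every local holomorphic section of $E^*$ is upper semicontinuous (here one uses the boundedness of $\varphi$), that theorem yields that $\log|u|$ is plurisubharmonic for every local holomorphic section $u$ of $E^*$, i.e.\ $E$ is Griffiths semipositive. I expect the main obstacle to be two matching issues: first, making the identification $E_t^{\otimes m}\cong H^2(D^m,e^{-\Phi^{(m)}_t})$ precise enough that the Ohsawa--Takegoshi estimate translates \emph{exactly} into the extension estimate of Definition~\ref{def-intr:multiple extension prop}, including the tensor-metric identity $|a^{\otimes m}|=|a|^m$; and second, justifying that the slicewise-$L^2$ extension $F$ really defines a holomorphic section of the infinite-rank bundle $E^{\otimes m}$, together with the upper-semicontinuity hypothesis.
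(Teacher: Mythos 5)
Your proposal is correct and follows essentially the same route as the paper: the same fiber-product power $U\times D^m$ with weight $\sum_{i}\varphi(t,z_i)$, the same Hilbert tensor-product identification $E_t^{\hat\otimes m}\cong H^2(D^m_t,e^{-\varphi_m})$ (Lemma \ref{lem:product property Bergman space}), and the same Ohsawa--Takegoshi extension from $\{t_0\}\times D^m$ with a constant depending only on $r$ and the bound on $U$, hence independent of $m$. The only difference is packaging: you invoke the infinite-rank version of Theorem \ref{thm-intr:cha of positive bundle} as a black box, whereas the paper inlines its proof (a second Ohsawa--Takegoshi application on the base producing a weight function $h$, followed by Cauchy--Schwarz) and concludes directly from Theorem \ref{thm:cha. of p.s.h function}, with the upper semicontinuity of $|u|$ supplied by Proposition \ref{prop: dual hodge u.s.c domain}.
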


\begin{rem}\label{rem-intr:OT=>optimal OT}
We will see that Theorem \ref{thm-intr:Positivity of hodge domain case}
can be deduced from Theorem \ref{thm-intr:Demailly OT} for pseudoconvex domains.
We have mentioned that Theorem \ref{thm-intr:Demailly OT} with optimal estimate for pseudocovnex domains
can be deduced from Theorem \ref{thm-intr:Positivity of hodge domain case} \cite{BL16}.
So logically one can think that Theorem \ref{thm-intr:Demailly OT} with optimal estimate
is a consequence of Theorem \ref{thm-intr:Demailly OT} without optimal estimate.
However, this is not a proper viewpoint since the work in \cite{BL16}
contains some essentially new input, which can be viewed as a new technique of localization.
On the other hand, it is natural to expect that Theorem \ref{thm-intr:Demailly OT} with optimal estimate
can be deduced from Theorem \ref{thm-intr:Demailly OT},
by the method of raising powers of complex manifolds that is discussed above.
To do this, it seems that one need to prove a modification of the $L^2$-existence theorem in \cite{Dem00}.
\end{rem}

We now consider the positivity of the direct image sheaf
of the twisted relative canonical bundle associated to a family of compact K\"ahler manifolds.
Let $X, Y$ be K\"ahler  manifolds of  dimension $r+n$ and $r$ respectively,
and let $p:X\rightarrow Y$  be  a proper holomorphic map.
For $y\in Y$ let $X_y=p^{-1}(y)$ , which is a compact submanifold of $X$ of dimension $n$ if $y$ is a regular value of $p$.
Let $L$ be a holomorphic line bundle over $X$, and $h$ be a singular  Hermitian metric on $L$,
whose curvature current is semi-positive.
Let $K_{X/Y}$ be the relative canonical bundle on $X$.
Let $\mathcal E_m=p_*(mK_{X/Y}\otimes L\otimes \mathcal I_m(h))$
be the direct image sheaves on $Y$ (see \S \ref{sec:Ohsawa-Takegoshi} for the definition of the idea sheaf $\mathcal I_m(h)$).
One can choose a proper analytic subset $A\subset Y$ such that:
\begin{itemize}
\item[(1)] $p$ is submersive over $Y\backslash A$,
\item[(2)] $\mathcal E_m$  is locally free on $X\backslash A$, and
\item[(3)] $E_{m,y}$  is naturally identified with
$H^0(X_y,mK_{X_y}\otimes L|_{X_y}\otimes \mathcal I_m(h)|_{X_y})$, for $y\in Y\backslash A$.
\end{itemize}
where $E_m$  is the vector bundle on $Y\backslash A$
associated to $\mathcal E_m$.
For $u\in E_{m,y}$,
the $m$-norm of $u$ is defined to be
$$H_m(u):=\|u\|_m=\left(\int_{X_y}|u|^{2/m}h^{1/m}\right)^{m/2}\leq +\infty.$$
Then $H_m$ is a Finsler metric on $E_m$.
In the case that $m=1$, we denote $\mathcal E_1, H_1$
by $\mathcal E, H$ respectively.
The following theorem says that $H$ is a positively curved singular Hermitian metric
in the coherent sheaf $\mathcal E$ (see Definition \ref{def:finsler on sheaf} for definition).

\begin{thm}\label{thm-intr:Positivity of hodge compact case}
$H$ is a positively curved singular Hermitian metric on $\mathcal E$.
\end{thm}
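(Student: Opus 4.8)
The plan is to derive Theorem~\ref{thm-intr:Positivity of hodge compact case} from the characterization of Griffiths positivity in Theorem~\ref{thm-intr:cha of positive bundle}, by showing that the $L^2$-Hodge metric $H=H_1$ satisfies the multiple $L^2$-extension property. Fix $y_0\in Y\setminus A$ and choose a bounded pseudoconvex coordinate chart $U\subset Y\setminus A$ around $y_0$, with coordinate $w\in\mc^r$; over $U$ the sheaf $\mathcal E$ is locally free with fibre $E_y=H^0(X_y,K_{X_y}\otimes L|_{X_y}\otimes\mathcal I_1(h))$ and $|u|^2=\int_{X_y}|u|^2h$. Since $y_0$ is arbitrary, it suffices to prove that $(E|_U,H)$ is Griffiths positive; a final step then extends positivity across $A$ to the whole sheaf $\mathcal E$.

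The central device is the $m$-th fibre-product power. For each $m\ge1$ form the $m$-fold fibre product $\mathcal X:=X\times_U\cdots\times_U X$ over $U$, with natural projection $p^{[m]}:\mathcal X\to U$ and factor projections $\mathrm{pr}_i:\mathcal X\to X$; it is a weakly pseudoconvex K\"ahler manifold because $U$ is pseudoconvex and $p$ is proper. Equip $\mathcal L:=\bigotimes_{i=1}^m\mathrm{pr}_i^*L$ with the external tensor metric $h^{[m]}:=\bigotimes_{i=1}^m\mathrm{pr}_i^*h$, whose curvature current is again semi-positive. The relative canonical bundle of a fibre product is the external tensor product of the relative canonical bundles, and the integrability defining $\mathcal I(h^{[m]})$ factorises over the product; hence K\"unneth gives a canonical identification
\[
E^{\otimes m}\;\cong\;p^{[m]}_*\bigl(K_{\mathcal X/U}\otimes\mathcal L\otimes\mathcal I(h^{[m]})\bigr)
\]
over $U$, under which a decomposable element $a^{\otimes m}\in E_{y_0}^{\otimes m}$ corresponds to $a\boxtimes\cdots\boxtimes a$ on the compact fibre $\mathcal X_{y_0}=(X_{y_0})^m$, and
\[
\int_{\mathcal X_{y_0}}|a^{\otimes m}|^2h^{[m]}=\Bigl(\int_{X_{y_0}}|a|^2h\Bigr)^m=|a|^{2m}.
\]

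Now apply Theorem~\ref{thm-intr:Demailly OT} on $\mathcal X$ to $(\mathcal L,h^{[m]})$ and the fibre $\mathcal X_{y_0}=s^{-1}(0)$, where $s:=(p^{[m]})^*(w-y_0):\mathcal X\to\mc^r$; here $0$ is a regular value and $|s|\le M$ with $M$ depending only on the diameter of $U$. This extends $a\boxtimes\cdots\boxtimes a$ to a holomorphic section $\Phi$ of $K_{\mathcal X}\otimes\mathcal L$ over $\mathcal X$ with $\int_{\mathcal X}|\Phi|^2h^{[m]}\le C_{r,M}\int_{\mathcal X_{y_0}}|a^{\otimes m}|^2|\Lambda^r(ds)|^{-2}$. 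Writing $\Phi=(p^{[m]})^*(dw)\wedge\psi$ and using the identification above, $\psi$ determines a holomorphic section $f_m$ of $E^{\otimes m}$ over $U$ with $f_m(y_0)=a^{\otimes m}$, and Fubini along the fibres gives $\int_U|f_m|^2\,dV_U=c\int_{\mathcal X}|\Phi|^2h^{[m]}$. The decisive point is that the Ohsawa--Takegoshi constant $C_{r,M}$ depends only on $r=\dim Y$ and on the fixed bound $M$, and is therefore independent of $m$; the only $m$-dependence of the final constant comes from the factor $|\Lambda^r(ds)|^{-2}$, which by the product structure of $\mathcal X_{y_0}$ grows at most polynomially in $m$. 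Hence there is $C_m$, independent of $y_0$ and with $\frac1m\log C_m\to0$, such that $\int_U|f_m|^2\,dV_U\le C_m|a|^{2m}$. Thus $(E|_U,H)$ has the multiple $L^2$-extension property, and Theorem~\ref{thm-intr:cha of positive bundle} yields Griffiths positivity of $(E,H)$ on $Y\setminus A$.

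The step I expect to be the main obstacle is the norm bookkeeping used above: one must verify that the tensor-product Finsler norm on $E^{\otimes m}$ appearing in Definition~\ref{def-intr:multiple extension prop} agrees, for the Hermitian metric $H$, with the fibrewise $L^2$-norm $\int_{\mathcal X_y}|\psi|^2h^{[m]}$ on the fibre product, the latter being exactly the Hilbert-space tensor-product norm on $E_y^{\otimes m}$ via K\"unneth; one must also check the upper semicontinuity hypothesis on local norms of sections of $E^*$, which holds for the $L^2$-Hodge metric. Finally, to pass from Griffiths positivity on $Y\setminus A$ to positivity of $H$ on the coherent sheaf $\mathcal E$ in the sense of Definition~\ref{def:finsler on sheaf}, note that for every local holomorphic section $\xi$ of $E^*$ the function $\log|\xi|$ is plurisubharmonic on $U\setminus A$ and locally bounded above, hence extends plurisubharmonically across the analytic set $A$; this last point is routine.
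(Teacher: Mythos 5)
Your overall strategy is the paper's own: raise the family to its $m$-th fibre-product power, extend $a^{\otimes m}$ from the central fibre by Theorem \ref{thm-intr:Demailly OT} with a constant $C_{r,M}$ that does not depend on $m$, and feed the resulting estimate into the characterization of positivity coming from Theorem \ref{thm:cha. of p.s.h function}. The only organizational difference is that you invoke Theorem \ref{thm-intr:cha of positive bundle} as a black box, whereas the paper re-runs its proof inline (Step 1 of the proof of Theorem \ref{thm:Positivity of hodge compact case}): it pairs the fibrewise extension $F$ with $u^{\otimes m}$, performs a second Ohsawa--Takegoshi extension on the base $U$ to manufacture a holomorphic function $h$ with $h(y_0)=1$, and applies Cauchy--Schwarz to land in the hypotheses of Theorem \ref{thm:cha. of p.s.h function} with $p=1$. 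Your norm bookkeeping also goes in the right direction: the injective tensor norm of Definition \ref{def:metric on tensor power} is dominated by the Hilbert tensor norm coming from the fibre product, and the two agree on $a^{\otimes m}$, so establishing the $L^2$ bound in the Hilbert norm does yield the multiple $L^2$-extension property. One structural difference worth noting: rather than pushing the multiplier ideal through a K\"unneth identification as you do, the paper first proves positivity for the full direct image $\tilde{\mathcal E}=p_*(K_{X/Y}\otimes L)$ and then descends to $\mathcal E=p_*(K_{X/Y}\otimes L\otimes\mathcal I(h))$ by splitting $\tilde E=E\oplus E'$ over the Stein base and observing that vectors outside $E$ have infinite norm, so dual norms are unchanged (Step 2). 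This sidesteps having to prove that $\mathcal I(h^{[m]})$ factorises, which you assert but do not justify.

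The genuine gap is your last sentence. To extend $\log|\xi|$ plurisubharmonically across $A$ one needs $\log|\xi|$ to be locally bounded above near $A$ (the discriminant locus can be a divisor, so codimension alone does not save you), and this boundedness is not automatic and is not ``routine'': it is the entire content of the paper's Step 3. The paper proves it by taking, for each $y\in V\setminus A$, a unit vector $a\in E_y$ realizing $\langle\xi(y),a\rangle=|\xi(y)|$, extending $a$ by Theorem \ref{thm-intr:Demailly OT} to a global section $s$ of $\mathcal E$ over $V$ with $\int_V|s|^2_h\le C$ uniformly in $y$ -- note this extension must cross the singular fibres, which is exactly where the weakly pseudoconvex K\"ahler form of the extension theorem is needed -- and then arguing that the set of all such $s$ is a normal family, so that $\{\langle\xi,s\rangle\}$ is uniformly bounded on compact subsets of $V$, whence $|\xi(y)|=|\langle\xi(y),s(y)\rangle|$ is bounded. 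Without this argument (or an equivalent one) your proof does not produce a positively curved metric on the coherent sheaf $\mathcal E$ in the sense of Definition \ref{def:finsler on sheaf}, only on its restriction to $Y\setminus A$.
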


In the proof of Theorem \ref{thm-intr:Positivity of hodge compact case},
we first show that $H$ is a positively curved Hermitian metric on $p_*(K_{X/Y}\otimes L)$,
and deduce form which the positivity of $(\mathcal E, H)$ by Oka-Grauert principle.

By the plurisubharonic variation of the $m$-Bergman kernel metrics
 and Theorem \ref{thm-intr:Positivity of hodge compact case},
one can see that the NS metric (see \S \ref{subsec:Bergman relative case} for definition)
on the direct image $p_*(kK_{X/Y}\otimes L\otimes \mathcal I_k(h))$
is positively curved in the sense of Griffiths.

Plurisubharmonic variation of Bergman kernels and positivity of
direct images of twisted relative canonical bundles have been
extensively studied in recent years by many authors (see e.g.
\cite{Bob06}\cite{Bob09a}\cite{BP08}\cite{BP10}\cite{GZh15d}\cite{DZZ17}\cite{PT18}\cite{Cao141}\cite{HPS16}\cite{ZZ}\cite{ZZ18}), in different settings and
different generality. The method in the present paper is quite
different from those in the cited works.

\begin{rem}
The same argument can be used to show that Theorem \ref{thm-intr:Positivity of hodge compact case}
still holds if $L$ is replaced by a holomorphic vector bundle with a Nakano semi-positive Hermitian metric (see also \cite{Liu-Yang14}).
\end{rem}

\begin{rem}
It is possible to prove that the metric $H_m$ (which is different from the NS metric defined in \S \ref{subsec:Bergman relative case}) is a
positively curved singular Finsler metric on the coherent sheaf $\mathcal E_m$ for all $m\geq 1$.
Considering their significance in birational classification of algebraic varieties (see e.g. \cite{Yau15}),
it seems that the study of the sheaves $\mathcal E_m$ is more important than that of their descendent objects - $m$-Bergman kernel metrics.
This topic will be discussed in a forthcoming work \cite{DWZZ18}.
\end{rem}


\subsection*{Acknowledgements}
The authors are partially supported by NSFC grants.

\section{$m$-Bergman kernel metric and Narasimhan-Simha metric}\label{sec:(relative)m-Bergman kernel}
In this section, we recall the so-called (relative) $m$-Bergman kernel metrics and Narasimhan-Simha metrics (NS metric for short) on
the twisted relative pluricanonical bundles associated to a family of complex manifolds.
We also prove a product property of (relative) $m$-Bergman kernels which will be frequently used in the rest of this paper.

\subsection{$m$-Bergman kernels associated to domains in $\mathbb{C}^n$}\label{subsec:m-Bergman kernel}
We first recall the definition of $m$-Bergman kernels on domains (see \cite{NZZ16} for more details).
Let $\Omega\subset \mathbb{C}^n$ be a domain.
Let $\varphi$ be an upper semicontinuous (u.s.c for short) function on $\Omega$.
Let $m\geq 1$ be an integer (indeed $m$ can be a real number in domain case).
The \emph{weighted $m$-Bergman space} is defined as
$$H_m(\Omega,\varphi):=\{u\in\mathcal{O}(\Omega): \|u\|_m:=\left(\int_\Omega |u|^{2/m}e^{-\varphi/m}\right)^{m/2}<+\infty \}.$$
Note that $\|u\|_m$ is not a norm if $m>1$ since the triangle inequality does not hold.
The associated $m$-Bergman kernel is defined as
\begin{align}\label{m Bergman kernel definition}
K_m(x):=\sup_{u\in H_m(\Omega,\varphi)\setminus \{0\}}\frac{|u(x)|^2}{\|u\|^2_m},
\end{align}
if $u(x)\neq 0$ for some $u\in H_m(\Omega,\varphi)$, and $K_m(x)=0$ if $u(x)=0$ for all $u\in H_m(\Omega,\varphi)$. Equivalently,  for any $x\in \Omega$ such that $K_m(x)\neq 0$,
\begin{align}\label{m Bergman kernel definition 2}
K_m(x)^{-1}=\inf\{\|u\|^2_m: u(x)=1, u\in H_m(\Omega,\varphi)\}.
\end{align}
\begin{rem}The $1$-Bergman kernel is  the  usual weighted Bergman kernel with weight $e^{-\varphi}$.
\end{rem}

The $m$-Bergman kernels have the  following product property.
\begin{prop}\label{prop:product property} Let $\Omega_1\subset \mathbb{C}^{n}, \Omega_2\subset \mathbb{C}^{s}$ be two domains, and $\varphi_1, \varphi_2$ be two  u.s.c.   functions on $\Omega_1$ and $\Omega_2$ respectively. Let $K_{m,1}, K_{m,2}, K_m$ be the $m$-Bergman kernels of $H_{m}(\Omega_1,\varphi_1)$, $H_m(\Omega_2,\varphi_2)$ and $H_m(\Omega_1\times \Omega_2,\varphi_1+\varphi_2)$ respectively. Then  for any point $(x_1,x_2)\in \Omega_1\times \Omega_2$,
\begin{align*}
K_m(x_1,x_2)=K_{m,1}(x_1)K_{m,2}(x_2).
\end{align*}
\end{prop}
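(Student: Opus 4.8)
The plan is to prove each inequality separately by exploiting the extremal characterizations \eqref{m Bergman kernel definition} and \eqref{m Bergman kernel definition 2}, combined with the fact that a product of holomorphic functions on the factors gives a holomorphic function on the product whose $2/m$-integral factors as a product of integrals.

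First I would establish the inequality $K_m(x_1,x_2)\geq K_{m,1}(x_1)K_{m,2}(x_2)$. Given $\varepsilon>0$, I choose near-extremal competitors $u_1\in H_m(\Omega_1,\varphi_1)$ and $u_2\in H_m(\Omega_2,\varphi_2)$ for the sup in \eqref{m Bergman kernel definition} at $x_1$ and $x_2$ respectively, with $u_j(x_j)\neq 0$. Set $u(z_1,z_2)=u_1(z_1)u_2(z_2)$, which lies in $\mathcal O(\Omega_1\times\Omega_2)$. By Fubini/Tonelli,
\begin{align*}
\|u\|_m^2=\left(\int_{\Omega_1\times\Omega_2}|u_1|^{2/m}|u_2|^{2/m}e^{-(\varphi_1+\varphi_2)/m}\right)^{m/2}=\|u_1\|_m^2\,\|u_2\|_m^2,
\end{align*}
and $|u(x_1,x_2)|^2=|u_1(x_1)|^2|u_2(x_2)|^2$. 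Hence $\frac{|u(x_1,x_2)|^2}{\|u\|_m^2}=\frac{|u_1(x_1)|^2}{\|u_1\|_m^2}\cdot\frac{|u_2(x_2)|^2}{\|u_2\|_m^2}$, and taking the supremum over the factors gives the claimed lower bound. (If either factor kernel vanishes, both sides are zero and there is nothing to prove.)

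For the reverse inequality $K_m(x_1,x_2)\leq K_{m,1}(x_1)K_{m,2}(x_2)$, I would take an arbitrary $u\in H_m(\Omega_1\times\Omega_2,\varphi_1+\varphi_2)$ with $u(x_1,x_2)\neq 0$ and estimate it slicewise. The key observation is that for a fixed $z_1$, the slice $u(z_1,\cdot)$ is a competitor in $H_m(\Omega_2,\varphi_2)$, so by \eqref{m Bergman kernel definition} one has the pointwise bound $|u(z_1,z_2)|^2\leq K_{m,2}(z_2)\|u(z_1,\cdot)\|_m^2$; evaluating the analogous slice estimate in the first variable and combining the two, one controls $|u(x_1,x_2)|^2$ by $K_{m,1}(x_1)K_{m,2}(x_2)\|u\|_m^2$. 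Dividing by $\|u\|_m^2$ and taking the supremum over $u$ yields the upper bound.

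\textbf{The main obstacle} is the slicewise estimate when $m>1$, since $\|\cdot\|_m$ is not a norm and one cannot naively invoke a Hilbert-space or triangle-inequality argument. The clean route is to rewrite everything in terms of the genuine norm on the power: writing $v=u^{\otimes}$ is not available, so instead I would pass to the substitution $w=|u|^{2/m}$ and work with the $L^1$ integrals $\int|u|^{2/m}e^{-\varphi/m}$, which behave multiplicatively under products and additively (via Fubini) under integration, so that the extremal problem \eqref{m Bergman kernel definition 2} becomes an honest $L^1$-minimization to which the product structure applies directly. In effect I reduce the $m$-Bergman problem to the $m=1$ product formula for the measure $|u|^{2/m}e^{-\varphi/m}\,d\lambda$, where the slicing argument is standard. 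Care must be taken that the near-extremal slices vary measurably in the transverse variable, but this follows from holomorphicity of $u$ together with the explicit kernel bound, so no serious regularity issue arises.
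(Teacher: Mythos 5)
Your proposal is correct and follows essentially the same route as the paper: products of near-extremal competitors plus Fubini for the lower bound, and for the upper bound the slicewise definitional inequality applied to $u(z_1,\cdot)$ and then to $u(\cdot,x_2)$, working throughout with the unpowered integrals $\int|u|^{2/m}e^{-\varphi/m}$ so that Fubini applies. The "main obstacle" you flag is not really one: the slice bound $|u(z_1,x_2)|^{2/m}\leq K_{m,2}(x_2)^{1/m}\int_{\Omega_2}|u(z_1,\cdot)|^{2/m}e^{-\varphi_2/m}$ is just the definition of $K_{m,2}$ as a supremum and needs no triangle inequality or measurable selection, which is exactly how the paper proceeds.
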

\begin{proof}
It is obvious that $K_m(x_1,x_2)\geq K_{m,1}(x_1)K_{m,2}(x_2)$. It suffices to prove that $K_m(x_1,x_2)\leq K_{m,1}(x_1)K_{m,2}(x_2)$. If $K_m(x_1,x_2)=0$, it is trivial, since one of $K_m(x_1)$ and $K_{m}(x_2)$ equals zero.

Now we assume that $K_m(x_1,x_2)\neq 0$.  Let $u\in \mathcal{O}(\Omega_1\times \Omega_2)$, such that $u(x_1,x_2)=1$, and $u\in H_m(\Omega_1\times \Omega_2, \varphi_1+\varphi_2)$
Then from the definition of $m$-Bergman kernel, one can obtain that
\begin{align*}
\int_{\Omega_2}|u(z_1, z_2)|^{2/m}e^{-\varphi_2(z_2)/m}d\lambda_{z_2}\geq \frac{|u(z_1,x_2)|^{2/m}}{(K_{m,2}(x_2))^{1/m}}.
\end{align*}
By Fubini theorem and the definition of $m$-Bergman kernel,
\begin{align*}
&(\int_{\Omega_1\times \Omega_2}|u(z_1,z_2)|^{2/m}e^{-(\varphi_1(z_1)+\varphi_2(z_2))/m}d\lambda_{z_1}d\lambda_{z_2})^{m}\\
&\geq \frac{1}{K_{m,2}(x_2)}(\int_{\Omega_1}|u(z_1,x_2)|^{2/m}e^{-\varphi_1(z_1)/m}d\lambda_{z_1})^{m}\\
&\geq \frac{1}{K_{m,1}(x_1)}\cdot\frac{1}{K_{m,2}(x_2)}
\end{align*}
Taking infimum on all $u\in H_m(\Omega_1\times\Omega_2, \varphi_1+\varphi_2)$ with $u(x_1,x_2)=1$, we get
\begin{align*}
\frac{1}{K_{m}(x_1,x_2)}\geq  \frac{1}{K_{m,1}(x_1)}\cdot\frac{1}{K_{m,2}(x_2)},
\end{align*}
and hence $K_m(x_1,x_2)\leq K_{m,1}(x_1)K_{m,2}(x_2)$.
\end{proof}

\subsection{$m$-Bergman kernel metrics on complex manifolds}\label{subsec:Bergman manifold-case}
Let $X$ be a compact manifold of dimension $n$,
and let $L$ be a holomorphic line bundle on $X$ with a (singular) Hermitian metric $h=e^{-\varphi}$.
We assume that local weights $\varphi$ of $h$ take values in $[-\infty,+\infty)$ and are upper semicontinuous.
For a section $u\in H^0(X, mK_X\otimes L)$, we define its $m$-norm as
$$\|u\|_m:=\left(\int_X|u|^{2/m}e^{\varphi/m}\right)^{m/2}.$$
Let $H^0_m(X,mK_X\otimes L)=\{u\in H^0(X,mK_X\otimes L); \|u\|_m<\infty\}$.
If $H^0_m(X,mK_X\otimes L)\neq \{0\}$, it induces a metric on $mK_X\otimes L$ as follows.
For $x\in X$ such that $u(x)\neq 0$ for some $u\in H^0_m(X,mK_X\otimes L)$,
the evaluation map $H^0_m(X,mK_X\otimes L)\ra (mK_X\otimes L)|_x$ is surjective and hence induces a metric on $(mK_X\otimes L)|_x$
given by
$$|v|_m:=\inf\{\|u\|_m: u\in H^0_m(X,mK_X\otimes L), u(x)=v\},$$
$v\in (mK_X\otimes L)|_x$; for $x\in X$ such that $u(x)=0$ for all $u\in H^0_m(X,mK_X\otimes L)$,
the metric on $(mK_X\otimes L)|_x$ is defined to be $+\infty$ for all nonzero vectors.

The metric defined above is called the \emph{$m$-Bergman kernel metric} on $mK_X\otimes L$ and will be denoted by $B^m_{(L,h)}$.
The local weights of $B^m_{(L,h)}$ can be given as follows.
Assume $(U,z)$ is a local coordinate on $X$ and $e$ is a local frame of $L$ on $U$.
Then an element $u\in H^0(X,mK_X\otimes L)$ can be represented on $U$ as $u=\tilde u dz^{\otimes m}\otimes e$,
where $\tilde u$ is a holomorphic function on $U$.
Let $K_{(U,m)}(x)=\sup\{\frac{1}{\|u\|^2_m}; u\in H_m^0(X,mK_X\otimes L), \tilde u(x)=1\}, \ x\in U$
if $u(x)\neq 0$ for some $u\in H_m^0(X,mK_X\otimes L)$, and set $K_{(U,m)}(x)$ otherwise.
Then $\ln K_{(U,m)}$ is  a local weight of $B_{(L,h)}$ on $U$ with respect to the frame $dz^{\otimes m}\otimes e$;
namely, with respect to the $m$-Bergman metric,
$$|dz^{\otimes m}\otimes e|^2=e^{-\ln K_{(U,n)}}=\frac{1}{K_{(U,m)}}.$$

The relation between the $m$-Bergman kernel defined in the previous subsection and the $m$-bergman kernel metric is as follows.
For a domain $\Omega\subset\mc^n$ and a p.s.h function $\varphi$ on $\Omega$,
viewing $e^{-\varphi}$ as a metric on the trivial line bundle $L=\Omega\times \mc$ over $\Omega$,
then the $m$-Bergman kernel metric on $mK_\Omega\otimes L\equiv \Omega\times \mc$ is given by $\frac{1}{K_m}$,
where $K_m$ is the $m$-Bergman kernel on $\Omega$ with weight $\varphi$.
Therefore, the $m$-Bergman kernel metric can be seen as an intrinsic definition of the $m$-Bergman kernel.


Similar to Proposition \ref{prop:product property}, we have the following product property for $m$-Bergman kernel metrics.

\begin{prop}\label{prop:product property-manifold}
Let $X_1$ and $X_2$ be two complex manifolds,
and $L_1\rightarrow X_1$ and $L_2\rightarrow X_2$ be two holomorphic line bundles over $X_1$ and $X_2$ respectively.
Let $h_1$ and $h_2$ be two Hermitian metrics on $L_1$ and $L_2$ respectively.
Let $p_i:X_1\times X_2\rightarrow X_i$ $(i=1,2)$ be the natural projections.
Let $L$ be the induced holomorphic line bundle $p_1^*L_1\otimes p_2^*L_2$ over $ X_1\times X_2$ with the Hermitian metric $h=p_1^*h_2\cdot p_2^*h_2$.
Let $B^m_{(L_1,h_1)}$, $B^m_{(L_2,h_2)}$ and $B^m_{(L,h)}$ be the corresponding $m$-Bergman kernel metrics
on $mK_{X_1}\otimes L_1, mK_{X_2}\otimes L_2$ and $mK_{X}\otimes L$ respectively.
Then
$$B^m_{(L,h)}=B^m_{(L_1,h_1)}\cdot B^m_{(L_2,h_2)}.$$
\end{prop}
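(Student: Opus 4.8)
The plan is to reduce Proposition \ref{prop:product property-manifold} to the domain version already established in Proposition \ref{prop:product property}, using the intrinsic local description of the $m$-Bergman kernel metric recorded just above. The main point is that the metric $B^m_{(L,h)}$ is defined purely in terms of global $m$-norms of sections of $mK_X\otimes L$, and that the relevant integrals factor over a product once we choose product local data. I would first fix points $x_1\in X_1$, $x_2\in X_2$ and choose local coordinate charts $(U_1,z)$ around $x_1$ and $(U_2,w)$ around $x_2$ together with local frames $e_1$ of $L_1$ and $e_2$ of $L_2$. On $U_1\times U_2\subset X_1\times X_2$ the product $L=p_1^*L_1\otimes p_2^*L_2$ has the natural frame $p_1^*e_1\otimes p_2^*e_2$, and since $K_{X_1\times X_2}\cong p_1^*K_{X_1}\otimes p_2^*K_{X_2}$, the form $(dz\wedge dw)^{\otimes m}\otimes(p_1^*e_1\otimes p_2^*e_2)$ gives a local frame of $mK_X\otimes L$ adapted to the product structure. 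The key identity to verify is that the local weights $K_{(U_1\times U_2,m)}$ of $B^m_{(L,h)}$ factor as the product $K_{(U_1,m)}\cdot K_{(U_2,m)}$ of the weights of $B^m_{(L_1,h_1)}$ and $B^m_{(L_2,h_2)}$, which is exactly the content of the asserted equality once written in these frames.

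The heart of the argument is therefore the factorization of the extremal problem. For a global section $u$ of $mK_X\otimes L$ with finite $m$-norm, the quantity $|u|^{2/m}h^{1/m}$ is an integrable $(n_1+n_2,n_1+n_2)$-form on $X$, and because $h=p_1^*h_1\cdot p_2^*h_2$ and the canonical bundle splits, the integrand restricted to the product chart is the pointwise product of a $z$-factor and a $w$-factor. The subtlety is that a general global $u$ on $X_1\times X_2$ need not be a pure tensor $u_1\boxtimes u_2$; nevertheless the extremal characterization only requires comparing with the two ``sup over sections normalized at a point'' problems. I would run the two inequalities separately, mirroring the proof of Proposition \ref{prop:product property}. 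For the inequality $B^m_{(L,h)}\le B^m_{(L_1,h_1)}\cdot B^m_{(L_2,h_2)}$ (i.e.\ $K_{(U_1\times U_2,m)}\ge K_{(U_1,m)}K_{(U_2,m)}$ in the weight picture) I would produce a competing global section on the product by tensoring near-extremal sections $u_1$ on $X_1$ and $u_2$ on $X_2$, using the product property of the $m$-norm of a pure tensor. For the reverse inequality I would take an arbitrary normalized global $u$ on $X_1\times X_2$, apply Fubini to integrate out the $z_2$-variable using the fiberwise $m$-Bergman extremal inequality on $X_2$, and then integrate out $z_1$ using the extremal inequality on $X_1$ — precisely the two-step estimate carried out in the proof of Proposition \ref{prop:product property}.

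The main obstacle I anticipate is bookkeeping rather than conceptual: one must check that the $m$-norm of a product section really factors multiplicatively, i.e.\ that $\|u_1\boxtimes u_2\|_m=\|u_1\|_m\,\|u_2\|_m$, which uses both the splitting $(dz\wedge dw)^{\otimes m}=dz^{\otimes m}\boxtimes dw^{\otimes m}$ (up to a universal sign/constant that is absorbed in the identification) and the factorization $h^{1/m}=(p_1^*h_1)^{1/m}(p_2^*h_2)^{1/m}$. A second point needing care is that, unlike the domain setting, sections here are genuinely global on compact manifolds, so the Fubini step must be justified on the full product rather than on a single chart; but since the $m$-norm is an integral of a globally defined density over $X=X_1\times X_2$, the Tonelli/Fubini argument applies to the positive integrand directly, and the pointwise fiber inequality used in Proposition \ref{prop:product property} transfers verbatim once $u$ is restricted to the fibers $\{z_1\}\times X_2$ and $X_1\times\{z_2\}$. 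Having matched both inequalities, the equality of local weights yields $B^m_{(L,h)}=B^m_{(L_1,h_1)}\cdot B^m_{(L_2,h_2)}$, completing the proof.
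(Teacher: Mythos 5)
Your proposal is correct and follows essentially the same route as the paper: the easy inequality $B^m_{(L,h)}\le B^m_{(L_1,h_1)}\cdot B^m_{(L_2,h_2)}$ by tensoring (near-)extremal sections, and the reverse inequality by restricting an arbitrary competitor to the slices, applying the fiberwise extremal inequality on $X_2$, then Fubini and the extremal inequality on $X_1$ — exactly the two-step estimate the paper transplants from Proposition \ref{prop:product property}. The extra bookkeeping you flag (factorization of the $m$-norm of pure tensors and of the local frames of $mK_X\otimes L$) is precisely what the paper leaves implicit in the phrase ``by definition, it is obvious that $B\leq B_1\otimes B_2$.''
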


The statement in Proposition \ref{prop:product property-manifold} is understood as follows.
In a canonical way, we can identify $K_X$ with $p_1^*K_{X_1}\otimes p^*_2K_{X_2}$,
and therefore identify $mK_X\otimes L$ with $(mK_{X_1}\otimes L_1)\otimes (mK_{X_2}\otimes L_2)$.
Then Proposition \ref{prop:product property-manifold} says that the $m$-Bergman kernel metric
$B^m_{(L,h)}$ is induced by the $m$-Bergman kernel metrics $B^m_{(L_1,h_1)}, B^m_{(L_2,h_2)}$
on $(mK_{X_1}\otimes L_1), (mK_{X_2}\otimes L_2)$.

\begin{proof}
The proof of Proposition \ref{prop:product property-manifold} is similar
to that of Proposition \ref{prop:product property}.

For simplicity, we denote $B^m_{(L_1, h_1)}, B^m_{(L_2, h_2)}, B^m_{(L, h)}$ by $B_1, B_2, B$ respectively.

By definition, it is obvious that $B\leq B_1\otimes B_2$.
We now prove  that $B\geq B_1\otimes B_2$.

Fix an arbitrary $(x_1,x_2)\in X_1\times X_2$.
If $B(x_1, x_2)=+\infty$, there is nothing to prove.
We assume that $B(x_1, x_2)\neq +\infty$.
Let $a\in (mK_{X_1}\otimes L_1)|_{x_1}, b\in (mK_{X_2}\otimes L_2)|_{x_2}$ with $a, b\neq 0$.
Let $u\in H^0_m(X_1\times X_2, mK_{X_1\times X_2}\otimes L)$ such that $u(x_1,x_2)=a\otimes b$.
From the definition of the Bergman kernel,
\begin{align*}
\int_{X_2} |u(z,w)|^{2/m}h^{1/m}_2 \geq |u(z,x_2)|^{2/m}B^{1/m}_2(b).
\end{align*}
By Fubini theorem and the definition of $m$-Bergman kernel,
\begin{align*}
&\left(\int_{X_1\times X_2}|u(z,w)|^{2/m}h^{1/m}_1 h^{1/m}_2\right)^m\\
&\geq B_2(b)\left(\int_{X_1}|u(z,x_2)|^{2/m}h^{1/m}_2\right)^m\\
&\geq B_1(a)B_2(b)=(B_1\cdot B_2)(a\otimes b).
\end{align*}
Taking infimum on all such $u$, we complete the proof.
\end{proof}

\subsection{Relative Bergman kernel metrics and Narasimhan-Simha metrics}\label{subsec:Bergman relative case}
Let $X$ be a compact complex manifold of dimension $n$,
and let $L$ be a holomorphic line bundle on $X$ with a (singular) Hermitian metric $h=e^{-\varphi}$.
We assume that local weights $\varphi$ of $h$ take values in $[-\infty,+\infty)$ and are upper semicontinuous.
In \S \ref{subsec:Bergman manifold-case} we have defined the $m$-Bergman kernel metric $B^m_{(L,h)}$ on $mK_X\otimes L$.
It is clear that $h_m:=(B^m_{(L,h)})^{\frac{m-1}{m}}h^{\frac{1}{m}}$ defines a metric on $L_m:=(m-1)K_X\otimes L$,
which is called the \emph{Narasimhan-Simha metric} (NS metric for short) on $L_m$.
Then the Bergman kernel metric on $H^0(X, K_X\otimes L_m)=H^0(X,mK_X\otimes L)$ induced from $h_m$ is also called the \emph{Narasimhan-Simha metric} on $H^0(X,mK_X\otimes L)$.

Let $X$ and $Y$ be complex manifolds of dimension $n+s$ and $s$ respectively.
Let $p:X\rightarrow Y$ be a  holomrophic surjective map. Denote by $X_y:=p^{-1}(y)$.
Note that  if $p$ is a submersion,
then all the fibers $X_y$ are complex submanifolds of the same dimension.
Let $K_X$, $K_Y$  be the cannonical bundles on $X$, $Y$ respectively,
 and $K_{X/Y}:=K_X-p^*K_Y$  be the relative canonical bundle on $X$.
 Let $(L,h)\rightarrow X$ be a holomorphic  line bundle over $X$ with a (singular) Hermitian mertic $h$,
 whose weights are locally integrable functions.

Firstly, we assume that $p$ is a submersion. Let  $t=(t_1,\cdots, t_s)$ be  local coordinates on an open set  $U\subset Y$. Then $dt:=dt_1\wedge \cdots\wedge dt_s$ is a trivialization of  $K_Y$ on $U$, and gives a natural map from $(n,0)$-forms $u$ on the fibers $X_y$ to sections $\widetilde{u}$ of $K_X$ over $X_y$ for $y\in U$ by
\begin{align*}
\widetilde{u}=u\wedge p^*(dt).
\end{align*}
Conversely, given a local section $\widetilde{u}$ of $K_X$, we can write $\widetilde{u}=u\wedge dt$ locally. The restriction of $u$ to fibers is then uniquely defined and thus defines a  section of  $K_{X_y}$. From the correspondence between $u$ and $\widetilde{u}$, we  get an isomorphism
\begin{align*}
K_{X_y}&\rightarrow K_{X/Y}|_{X_y}\\
u&\mapsto  u\wedge\frac{p^*(dt)}{dt}.
\end{align*}
It is worth to mention that through the correspondence between $u$ and $\widetilde{u}$ depends on the choice of $dt$, but the above isomorphism is independent of the choice of $dt$. Similarly, we have the following isomorphism
\begin{align*}
mK_{X_y}&\rightarrow mK_{X/Y}|_{X_y}\\
u&\mapsto u\wedge\frac{(p^*(dt))^{\otimes m}}{(dt)^{\otimes m}},
\end{align*}
which further induces a canonical isomorphism between $mK_{X_y}\times L|_{X_y}$ and $(mK_{X/Y}\otimes L)|_{X_y}$.

Recall that we have defined the $m$-Bergman kernel metric on $mK_{X_y}\times L|_{X_y}$,
which can also be viewed an Hermitian metric on $(mK_{X/Y}\otimes L)|_{X_y}$, according to the above isomorphism.
As $y$ varies in $Y$, we get a (singular) Hermitian metric on $mK_{X/Y}\otimes L$,
which will be called the relative $m$-Bergman kernel metric on $mK_{X/Y}\otimes L$.
The relative NS metric on $(m-1)K_{X/Y}\otimes L$ is defined in the same way
from the NS metric on $(m-1)K_{X_y}\times L|_{X_y}$.

If $p:X\ra Y$ is not a submersion,
the relative $m$-Bergman kernel metric given above is only defined on $(mK_{X/Y}\otimes L)|_{p^{-1}(U)}$
for some Zariski open set $U$ in $Y$.
One of the main aims of the paper is to show that, for some cases, the $m$-Bergman kernel metric on $(mK_{X/Y}\otimes L)|_{p^{-1}(U)}$
is positively curved and can be extended to a positively curved metric on $mK_{X/Y}\otimes L$.

\section{Singular Finsler metrics on coherent analytic sheaves}\label{sec:metric on sheaf}
In this section,
we recall the notions of singular Finsler metrics on holomorphic vector bundles
and give a definition of positively curved singular Finsler metrics on coherent analytic sheaves.

\begin{defn}\label{def:finsler on v.b.}
Let $E\rightarrow X$ be a holomorphic vector bundle over a complex manifold $X$. A (singular) Finsler metric $h$ on $E$
is a function $h:E\ra [0,+\infty]$, such that $|v|^2_h:=h(cv)=|c|^2h(v)$ for any $v\in E$ and $c\in\mc$.
\end{defn}
In the above definition, we do not assume any regularity property of a singular Finsler metric.
Only when considering Griffiths positivity certain regularity is required,
as shown in the following Definition \ref{def:finsler on v.b.}.

\begin{defn}\label{def:dual finsler metric}
For a singular Finsler metric $h$ on $E$,
its dual Finsler metric $h^*$ on the dual bundle $E^*$ of $E$ is defined as follows.
For $f\in E^*_x$, the fiber of $E^*$ at $x\in X$,
$|f|_{h^*}$ is defined to be $0$ if $|v|_h=+\infty$ for all nonzero $v\in E_x$;
otherwise,
$$|f|_{h^*}:=\sup\{|f(v)|; v\in E_x, |v|_h\leq 1\}\leq +\infty.$$
\end{defn}

\begin{defn}\label{def:positivity of finsler}
Let $E\rightarrow X$ be a holomorphic vector bundle over a complex manifold $X$.
A singular Finsler metric $h$ on $E$ is called negatively curved (in the sense of Griffiths)
if for any local holomorphic section $s$ of $E$ the function $\log|s|^2_h$ is plurisubharmonic,
and is called positively curved (in the sense of Griffiths) if its dual metric $h^*$ on $E^*$ is negatively curved.
\end{defn}


As far as our knowledge, there have not been natural definition of singular Finsler metric on a coherent analytic sheaf.
In the present paper, we will propose a definition of  positively curved Finsler metrics on coherent analytic sheaves.
Let $\mathcal F$ be a coherent analytic sheaf on $X$,
it is well known that $\mathcal F$ is locally free on some Zariski open subset $U$ of $X$.
On $U$, we will identify $\mathcal F$ with the vector bundle associated to it.

\begin{defn}\label{def:finsler on sheaf}
Let $\mathcal{F}$ be a coherent analytic sheaf on a complex manifold $X$.
Let $Z\subset X$ be an analytic subset of $X$ such that $\mathcal{F}|_{X\setminus Z}$ is locally free.
A positively curved singular Finsler metric $h$ on $\mathcal{F}$ is a singular Finsler metric on the holomorphic vector bundle $\mathcal{F}|_{X\setminus Z}$,
such that for any local holomorphic section $g$ of the dual sheaf $\mathcal{F}^*$ on an open set $U\subset X$,
the function $\log|g|_{h^*}$ is p.s.h.  on  $U\setminus Z$, and can be extended to a p.s.h. function on $U$.
\end{defn}

\begin{rem}
Suppose that $\log|g|_{h^*}$  is p.s.h. on $U\setminus Z$.
It is well-known that if  codim$_{\mathbb{C}}(Z)\geq 2$ or $\log|g|_{H^*}$ is locally bounded above near $Z$,
then $\log|g|_{h^*}$ extends across $Z$ to $U$ uniquely as a p.s.h function.
Definition \ref{def:finsler on sheaf} matches Definition \ref{def:finsler on v.b.}
and Definition \ref{def:positivity of finsler} if $\mathcal F$ is a vector bundle.
\end{rem}

\section{Extension theorems of Ohsawa-Takegoshi type} \label{sec:Ohsawa-Takegoshi}
In this section, we prepare some extension theorems of Ohsawa-Takegoshi type
which will be used as a central tool in this paper.

\begin{thm}\label{thm:Demailly OT}
Let $(X,\omega)$ be a weakly pseudoconvex K\"{a}hler manifold
and $L$ be a holomorphic line bundle over $X$ with a (singular) hermitian metric $h$.
Let $s:X\ra\mc^r$ be a holomorphic map such that $0\in\mc^r$ is not a critical value of $s$.
Assume that the curvature current of $(L,h)$ is semi-positive and $|s(x)|\leq M$ for some constant $M$.
Let $Y=s^{-1}(0)$ be the zero set of $s$.
Then for every holomorphic section $f$ of $K_X\otimes L$ over $Y$
such that  $\int_Y|f|^2|\Lambda^r(ds)|^{-2}dV_\omega<+\infty$,
there exists a holomorphic section $F$ of  $K_X\otimes L$ over $X$
such that $F|_Y=f$ and
\begin{align*}
\int_X|F|_L^2dV_{X,\omega}\leq C_{r,M}\int_Y\frac{|f|_L^2}{|\Lambda^r(ds)|^2}dV_{Y,\omega}.
\end{align*}
where $C_{r,M}$ is a constant depending only on $r$ and $M$.
\end{thm}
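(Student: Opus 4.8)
The plan is to prove the theorem by the Ohsawa--Takegoshi--Manivel--Demailly method: to solve a $\bar\partial$-equation against a weight that is singular along $Y$, so that the singularity forces the solution into the ideal sheaf of $Y$ while a weighted $L^2$-estimate yields the stated bound. Two preliminary reductions come first. Since $h=e^{-\varphi}$ is only singular, I would approximate $\varphi$ from above by a decreasing sequence of smooth weights $\varphi_\nu$ with $i\partial\bar\partial\varphi_\nu\geq-\epsilon_\nu\omega$, $\epsilon_\nu\ra 0$, prove the estimate for each smooth $(L,e^{-\varphi_\nu})$ with a bound uniform in $\nu$, and recover the singular case by a standard normal-families argument together with the monotone convergence theorem (extracting a locally uniformly convergent subsequence of the extensions and passing the $L^2$-bound to the limit). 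Since $(X,\omega)$ is only weakly pseudoconvex, I would also exhaust $X$ by relatively compact sublevel sets of a smooth exhaustion and equip each with a complete K\"ahler metric (adding a large multiple of $i\partial\bar\partial$ of a convex function of the exhaustion), solve on each piece with a constant independent of the piece, and pass to the limit. This reduces matters to $h$ smooth with $i\Theta_{(L,h)}\geq 0$ on a complete K\"ahler manifold, where H\"ormander-type machinery applies.

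Because $0$ is a regular value of $s$, the set $Y=s^{-1}(0)$ is a smooth submanifold of codimension $r$. I would choose a smooth section $\tilde f$ of $K_X\otimes L$ near $Y$ that extends $f$ and is holomorphic to first order in the directions normal to $Y$, cut it off by a function $\chi$ supported in a tubular neighborhood to obtain a global smooth section $F_0=\chi\tilde f$, and set $g:=\bar\partial F_0$, which is supported where $|s|$ is bounded away from $0$. The goal is then to solve $\bar\partial u=g$ with $u$ lying in the ideal sheaf generated by the components $s_1,\dots,s_r$, so that $F:=F_0-u$ is holomorphic and satisfies $F|_Y=f$, all while controlling $\int_X|F|_L^2\,dV_{X,\omega}$.

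The analytic core is a weighted $L^2$-estimate. Following Demailly, I would introduce auxiliary functions $\eta=\eta(|s|^2)$ and $\lambda=\lambda(|s|^2)$ built from $\log(|s|^2/M^2)$ and apply the twisted Bochner--Kodaira--Nakano inequality, which in schematic form reads
\[
\int_X\big\langle\,[\,\eta\,i\Theta_{(L,h)}-i\partial\bar\partial\eta-i\lambda^{-1}\,\partial\eta\wedge\bar\partial\eta,\ \Lambda\,]\,v,\,v\,\big\rangle\,dV_\omega\ \leq\ \int_X(\eta+\lambda^{-1})\,\big|(D'')^{*}v\big|^2\,dV_\omega
\]
for test forms $v$ of bidegree $(n,1)$ with values in $L$. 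The construction is arranged so that the term $-i\partial\bar\partial\eta$ coming from the $\log|s|^2$-singularity produces a strictly positive curvature contribution dominating the pairing $|\langle g,v\rangle|^2$; the standard duality argument then yields a solution $u$ of $\bar\partial u=g$ with a weighted $L^2$-bound. The $\log|s|^2$-singularity of the weight is exactly what forces $u$ into the ideal sheaf, and a residue/coarea computation along the normal bundle of $Y$ converts the ambient bound into the fiber integral $\int_Y|f|_L^2/|\Lambda^r(ds)|^2\,dV_{Y,\omega}$.

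The hard part is twofold. First, to make $C_{r,M}$ depend only on $r$ and $M$, the functions $\eta,\lambda$ must be chosen as explicit, universal profiles of the scale-invariant quantity $|s|^2/M^2$, so that no geometric datum of $X$ enters the final constant; this is precisely where the hypothesis $|s|\leq M$ is used, since it fixes the range over which these weights are integrated. Second, the higher-codimension case $r>1$ is genuinely more delicate than the hypersurface case: one must work with a singular weight of the form $r\log|s|^2$, regularize it as $\log(|s|^2+\epsilon^2)$, and then carry out the limit $\epsilon\ra 0$ together with the identification of the normal residue with $\Lambda^r(ds)$ carefully enough to land on the exact denominator $|\Lambda^r(ds)|^2$ and to guarantee that $u$ vanishes to the correct order along $Y$.
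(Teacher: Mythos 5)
The paper does not prove this theorem at all: it is imported as a known result, with Remark \ref{rem:OT with singular metric} attributing the pseudoconvex-domain case to Ohsawa--Takegoshi \cite{OT1}, the smooth-metric K\"ahler case to Manivel \cite{Man93} and Demailly \cite{Dem00}, and the singular-metric weakly pseudoconvex K\"ahler case stated here to Cao \cite{Cao141} and Zhou--Zhu \cite{ZZ}. So there is no in-paper argument to compare yours against; the only fair comparison is with the literature proof, of which your text is an outline of the classical Manivel--Demailly scheme (extension by cut-off, twisted Bochner--Kodaira--Nakano inequality with auxiliary functions $\eta,\lambda$ built from $\log(|s|^2/M^2)$, and a residue computation along $Y$). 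That part of your plan is the right strategy, and you correctly identify where the hypotheses $|s|\leq M$ and the regularity of $0$ enter.

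The genuine gap is in your first reduction. Passing from singular $h$ to smooth $h$ on a weakly pseudoconvex K\"ahler manifold is not ``a standard normal-families argument together with monotone convergence.'' On a general (non-projective, non-Stein) K\"ahler manifold, Demailly-type regularization of a semipositively curved singular metric necessarily loses positivity, i.e.\ one only gets $i\partial\bar\partial\varphi_\nu\geq-\epsilon_\nu\omega$; this negative term enters the curvature operator $[\eta\, i\Theta-i\partial\bar\partial\eta-i\lambda^{-1}\partial\eta\wedge\bar\partial\eta,\Lambda]$ and destroys the positivity your duality argument needs, uniformly in $\nu$ only after nontrivial work. Moreover the multiplier ideals of $\varphi_\nu$ can jump in the limit, so one must also check that the limiting section still restricts to $f$ on $Y$ and that the $L^2$ bound survives. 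Overcoming exactly these two difficulties is the content of \cite{Cao141} and \cite{ZZ}; your proposal defers the hardest step to a ``standard'' argument that does not exist in this generality. The remaining deferred items (explicit profiles for $\eta,\lambda$, the $\epsilon\ra 0$ limit in the weight $r\log(|s|^2+\epsilon^2)$, and the identification of the residue with $|\Lambda^r(ds)|^{-2}$) are genuinely carried out in \cite{Dem00} and are standard, so flagging them as ``the hard part'' without executing them is acceptable for a sketch, but the singular-metric reduction is not.
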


\begin{rem}\label{rem:OT with singular metric}
For Theorem \ref{thm-intr:Demailly OT}, the case that $X$ is a
pseudoconvex domain is proved by Ohsawa and Takegoshi in \cite{OT1}.
A geometric presentation in the case that $X$ is K\"ahler and $h$ is
smooth  was given by Manivel \cite{Man93} and Demailly \cite{Dem00}.
Recently, after the works by Blocki (\cite{Bl13}) and Guan-Zhou
(\cite{GZh12}, \cite{GZh15}, \cite {GZh15d}), the optimal $L^2$
extension in the setting of pseudoconvex K\"{a}hler manifolds presented as
above was proved by Cao \cite{Cao141} and Zhou-Zhu \cite{ZZ} with an
optimal estimate of the constant $C_{r,M}$.
\end{rem}

Combining Theorem \ref{thm:Demailly OT} and the  iteration method in \cite{BP10}, we ge the following

\begin{thm}[\cite{BP10}]\label{thm:Lm-extension on psc}
Let $\Omega\subset \mathbb{C}^{n+r}$ be a pseudoconvex domain,
and $p: \Omega\rightarrow p(\Omega)\subset \mathbb{C}^r$ be the natural projection.
For $y\in p(\Omega)$, we denote $\Omega_y:=p^{-1}(y)$ by $\Omega_y$.
Let $\varphi$  be a p.s.h function on $\Omega$.
Assume that $|y|\leq M$ for all $y\in p(\Omega)$.
Let $m\geq 1$ be an integer and $y_0\in p(\Omega)$ such that $\varphi$ is not identically $-\infty$ on any branch of $\Omega_{y_0}$.
Then for any holomorphic function $u$ on $\Omega_{y_0}$ such that
\begin{align*}
\int_{\Omega_{y_0}}|u|^{2/m}e^{-\varphi}<+\infty,
\end{align*}
there exists a holomorphic function $U$ on $\Omega$ such that $U|_{\Omega_{y_0}}=u$ and
\begin{align*}
\int_{\Omega}|U|^{2/m}e^{-\varphi}\leq C_{r,M}\int_{\Omega_{y_0}}|u|^{2/m}e^{-\varphi},
\end{align*}
where $C_{r,M}$ is the constant as in Theorem \ref{thm:Demailly OT}.
\end{thm}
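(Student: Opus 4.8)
The plan is to deduce this $L^{2/m}$-extension from the $L^2$-extension of Theorem \ref{thm:Demailly OT} by the iteration scheme of Berndtsson--P\u{a}un, whose essential idea is to hide the gap between the exponents $2$ and $2/m$ inside the weight. Write $q:=2/m\in(0,2]$ and $B_0:=\int_{\Omega_{y_0}}|u|^{q}e^{-\varphi}<+\infty$. I identify holomorphic functions on the (Stein, hence weakly pseudoconvex K\"ahler) domain $\Omega$ with sections of $K_\Omega$ through the Euclidean volume form, and take $s(z,y):=y-y_0\in\mc^r$, which is a submersion (so $0$ is not a critical value), has $\Omega_{y_0}=s^{-1}(0)$, satisfies $|s|\le 2M$, and has $|\Lambda^r(ds)|\equiv 1$. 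In this picture Theorem \ref{thm:Demailly OT} reads: for every plurisubharmonic weight $\psi$ on $\Omega$ and every holomorphic $f$ on $\Omega_{y_0}$ with $\int_{\Omega_{y_0}}|f|^2e^{-\psi}<\infty$ there is a holomorphic extension $F$ on $\Omega$ with $\int_\Omega|F|^2e^{-\psi}\le C_{r,M}\int_{\Omega_{y_0}}|f|^2e^{-\psi}$. The case $m=1$ is exactly this, so assume $m>1$, i.e.\ $q<2$.

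First I would build a sequence of holomorphic extensions $U_k$ of $u$. Given $U_k$ with $U_k|_{\Omega_{y_0}}=u$, set $\psi_k:=\varphi+(2-q)\log|U_k|$, which is plurisubharmonic as a positive combination of the p.s.h.\ functions $\varphi$ and $\log|U_k|$. Since $\psi_k|_{\Omega_{y_0}}=\varphi+(2-q)\log|u|$ and $2-(2-q)=q$, the fiber integral of $u$ against $e^{-\psi_k}$ is precisely $B_0$, so Theorem \ref{thm:Demailly OT} applies and produces an extension $U_{k+1}$ with
\[
\int_\Omega |U_{k+1}|^2\,|U_k|^{-(2-q)}\,e^{-\varphi}=\int_\Omega|U_{k+1}|^2e^{-\psi_k}\le C_{r,M}\,B_0 .
\]
Thus this quantity is finite and bounded uniformly in $k$.

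Next I would convert this into a recursion for $a_k:=\int_\Omega|U_k|^q e^{-\varphi}$. Factoring $|U_{k+1}|^qe^{-\varphi}=\big(|U_{k+1}|^2|U_k|^{-(2-q)}e^{-\varphi}\big)^{q/2}\big(|U_k|^qe^{-\varphi}\big)^{(2-q)/2}$ and applying H\"older's inequality with conjugate exponents $2/q$ and $2/(2-q)$ gives
\[
a_{k+1}\le \big(C_{r,M}B_0\big)^{q/2}\,a_k^{(2-q)/2}.
\]
Putting $b_k:=a_k/(C_{r,M}B_0)$ and $\theta:=(2-q)/2\in(0,1)$, this reads $b_{k+1}\le b_k^{\theta}$, whence $b_k\le b_0^{\theta^{k}}\to 1$ and therefore $\limsup_k a_k\le C_{r,M}B_0$. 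The key point is that the fixed point of the recursion is exactly $C_{r,M}B_0$, so the constant of Theorem \ref{thm:Demailly OT} is reproduced with no loss.

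Finally I would pass to the limit. Because $e^{-\varphi}$ is locally bounded below ($\varphi$ being locally bounded above) and $\sup_k a_k<\infty$, the sub-mean-value inequality for the subharmonic functions $|U_k|^q$ makes the $U_k$ locally uniformly bounded, hence a normal family; any locally uniform limit $U=\lim_j U_{k_j}$ is holomorphic, satisfies $U|_{\Omega_{y_0}}=u$, and obeys $\int_\Omega|U|^q e^{-\varphi}\le C_{r,M}B_0$ by Fatou's lemma. The step I expect to be the main obstacle is guaranteeing a \emph{finite starting value} $a_0$: one must first produce some holomorphic extension $U_0$ of $u$ with $\int_\Omega|U_0|^qe^{-\varphi}<\infty$, which is not immediate since only the $L^{q}$-norm of $u$ along the fiber is controlled. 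I would resolve this by exhausting $\Omega$ with relatively compact pseudoconvex subdomains $\Omega_j\uparrow\Omega$, on each of which a bounded holomorphic extension (hence finite $a_0$) exists and the iteration runs with the single constant $C_{r,M}$, and then extracting the extension on $\Omega$ by a final normal-families argument. A minor technical point, harmless by the uniform bound above, is that $\psi_k=-\infty$ on the zero locus of $U_k$; the displayed estimates show the relevant integrals stay finite there.
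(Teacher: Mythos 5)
Your proposal is correct and follows essentially the same route as the paper's proof: the Berndtsson--P\u{a}un iteration in which the exponent gap $2-2/m$ is absorbed into the weight $\varphi+(1-1/m)\log|U_k|^2$, the H\"older step giving the recursion $a_{k+1}\le (C_{r,M}B_0)^{q/2}a_k^{(2-q)/2}$ whose fixed point is exactly $C_{r,M}B_0$, and a normal-families/Fatou passage to the limit. The two points you flag as delicate --- securing a finite starting value by first shrinking to a relatively compact subdomain and then exhausting, and the harmlessness of the $-\infty$ locus of the modified weight --- are handled the same way (if more tersely) in the paper.
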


\begin{proof}
We follow the idea as in the proof of  Theorem  in \cite{BP10}.
By standard regularization argument,  without loss of generality,
one may assume that $\varphi\in Psh(\overline{\Omega})\cap\mathcal{C}^\infty(\overline{\Omega})$,
$u$ is holomorphic on some neighborhood of $\overline{\Omega}_{y_0}$.
We can also assume
\begin{align*}
\int_{\Omega_{y_0}}|u|^{2/m}e^{-\varphi}d\lambda=1.
\end{align*}
From the pseudoconvexity of $\Omega$,
one can find a holomorphic function $F_1\in \mathcal{O}(\Omega)$ with $F_1|_{{\Omega}_{y_0}}=u(x)$.
Replacing $\Omega$ by a relatively compact domain in it, we can assume
\begin{align*}
\int_\Omega|F_1|^{2/m}e^{-\varphi}d\lambda\leq A<+\infty.
\end{align*}
Let $\varphi_1=\varphi+(1-1/m)\log|F_1|^2$,
by Theorem \ref{thm:Demailly OT} with the weight $\varphi_1$,
there is a new extension of $F_2$ of $u$ satisfying
\begin{align*}
\int_\Omega\frac{|F_2|^2}{|F_1|^{2-2/m}}e^{-\varphi}d\lambda\leq C_{r,M}\int_{\Omega_{y_0}}\frac{|u|^{2}}{|F_1|^{2-2/m}}e^{-\varphi}d\lambda_V=C_{r,M}.
\end{align*}
By H\"{o}lder's inequality,
\begin{align*}
\int_\Omega|F_2|^{2/m}e^{-\varphi}d\lambda&\leq \Big(\int_\Omega\frac{|F_2|^2}{|F_1|^{2-2/m}}e^{-\varphi}d\lambda\Big)^{1/m}\Big(\int_\Omega|F_1|^{2/m}e^{-\varphi}d\lambda\Big)^{(m-1)/m}\\
&\leq C_{r,M}^{1/m}A^{(m-1)/m}=A(C_{r,M}/A)^{1/m}=:A_1.
\end{align*}
We can assume $A>C_{r,M}$, then $A_1<A$.
Repeating the same argument with $F_1$ replaced by $F_2$, etc,
we get a decreasing sequence of constants $A_k$, such that
\begin{align*}
A_{k+1}=A_k(C_{r,M}/A_k)^{1/m}
\end{align*}
for $k\geq 1$.
It is easy to see that $A_k$ tends to $C_{r,M}$.
Taking limit, we obtain a holomorphic function $U$ on $\Omega$ extending $u$, such that
\begin{align*}
\int_\Omega |U(x)|^{2/m}e^{-\varphi}d\lambda\leq C_{r,M}.
\end{align*}
\end{proof}

Theorem \ref{thm:Lm-extension on psc} can be extended to a family of compact K\"ahler manifolds.
Let $B\subset \mc^r$ be the unit ball and let $X$ be a K\"ahler manifold of dimension $n+r$.
Let $p:X\ra B$ be a holomorphic proper submersion.
For $t\in B$, denote by $X_t$ the fiber $p^{-1}(t)$.
Let $L$ be a holomorphic line bundle on $X$ with a (singular) Hermitian metric $h$
whose curvature current is positive.

Let $k>0$ be a fixed integer.
The multiplier ideal sheaf $\mathcal I_k(h)\subset \mathcal O_X$ is defined as follows.
If $\varphi$ is a local weight of $h$ on some open set $U\subset X$,
then the germ of $\mathcal I_k(h)$ at a point $p\in U$ consists of the germs
of holomorphic functions $f$ at $p$ such that $|f|^{2/k}e^{-\varphi/k}$ is integrable at $p$.
It is known that $\mathcal I(h^{1/k})$ is a coherent analytic sheaf on $X$ \cite{Cao141}.

Let $\mathcal E=p_*(kK_{X/B}\otimes L\otimes \mathcal I_k(h))$ be the direct image sheaf on $B$.
For any open subset $U$ of $B$ containing the origin 0 and any $s\in H^0(U,\mathcal E|_U))$,
the restriction of $s$ on $X_0$, denoted by $s|_{X_0}$, gives a section in $H^0(X_0, kK_{X_0}\otimes \mathcal I_k(h)|_{X_0})$.
By Cartan's Theorem B, there exists a global section $\tilde s$ of $\mathcal E$ on $B$ such that $\tilde s|_{X_0}=s|_{X_0}$.
For For $u\in H^0(X_0, kK_{X_0}\otimes\mathcal I_k(h)|_{X_0})$, as in \S \ref{subsec:Bergman manifold-case},
the $k$-norm of $u$ is defined to be
$$\|u\|_k=\left(\int_{X_0}|u|^{2/k}h^{1/k}\right)^{k/2}\leq +\infty.$$


Modifying the argument in the proof of Theorem \ref{thm:Lm-extension on psc},
we can prove the following

\begin{thm}\label{thm:Lm-extension projective}
Let $u\in H^0(X_0, kK_{X_0}\otimes\mathcal I_k(h)|_{X_0})$ with $\|u\|_k<\infty$.
Assume there exist an open subset $U$ containing the origin and $s_0\in H^0(U,\mathcal E|_U)$ such that $s_0|_{X_0}=u$,
then there exists $s\in H^0(X,kK_X\otimes L\otimes \mathcal I_k(h))=H^0(B, kK_B\otimes \mathcal E)$
such that $s|_{X_0}=u\wedge dt^{\otimes k}$ and
$$\int_X |s|^{2/k}h^{1/k}\leq C_{r,1}\int_{X_0}|u|^{2/k}h^{1/k},$$
where $t=(t_1,\cdots, t_r)$ is the standard coordinate on $B$ and $dt=dt_1\wedge\cdots\wedge dt_r$,
and $C_{r,1}$ is the constant as in Theorem \ref{thm:Lm-extension on psc}.
\end{thm}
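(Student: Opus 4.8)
The plan is to follow the iteration scheme of Berndtsson--P\u{a}un used in the proof of Theorem \ref{thm:Lm-extension on psc}, now carried out on the total space $X$ with the line bundle $L$ and the relative canonical bundle in place of the trivial bundle on a domain. The essential idea is to trade the single $L^{2/k}$ extension for an infinite sequence of honest $L^2$ extensions, each governed by Theorem \ref{thm:Demailly OT}. To put ourselves in the $L^2$ setting I would write $kK_X\otimes L=K_X\otimes L'$ with $L':=(k-1)K_X\otimes L$, so that a section of $kK_X\otimes L$ is exactly a section of $K_X\otimes L'$ and Theorem \ref{thm:Demailly OT} applies with $s=p$, $Y=X_0=p^{-1}(0)$ (here $0$ is a regular value since $p$ is a submersion) and $M=1$ (because $|p(x)|=|t|\le 1$ on $B$), which produces the constant $C_{r,1}$. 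The hypothesis that $u$ is the restriction of a section $s_0\in H^0(U,\mathcal E|_U)$, together with Cartan's Theorem B, furnishes an initial global holomorphic extension $F_1\in H^0(X,kK_X\otimes L\otimes\mathcal I_k(h))$ of $u\wedge dt^{\otimes k}$; using properness of $p$ one arranges $A_1:=\int_X|F_1|^{2/k}h^{1/k}<\infty$.

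The heart of the argument is the choice of auxiliary metric at each step. Given an extension $F_j$ of $u\wedge dt^{\otimes k}$, I would equip $L'$ with the singular metric whose local weight in the frame $(dz)^{\otimes(k-1)}\otimes e_L$ is
\[
\psi_j=\tfrac1k\varphi+\tfrac{k-1}{k}\log|g_j|^2,
\]
where $\varphi$ is the weight of $h$ in the frame $e_L$ of $L$ and $g_j$ is the coefficient of $F_j$ in the frame $(dz)^{\otimes k}\otimes e_L$ of $kK_X\otimes L$. A direct frame-change check shows that $e^{-\psi_j}|g_{j+1}|^2\,dz\wedge d\bar z$ is a globally well-defined measure, and that $i\partial\bar\partial\psi_j=\tfrac1k\Theta(L,h)+\tfrac{k-1}{k}[\mathrm{div}\,F_j]\ge0$; thus the curvature of $(L',\psi_j)$ is semipositive and Theorem \ref{thm:Demailly OT} applies. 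Since every $F_j$ restricts to $u\wedge dt^{\otimes k}$ on $X_0$, the weight $\psi_j$ restricted to $X_0$ turns the right-hand side of the extension estimate into the intrinsic fibre integral, yielding a new extension $F_{j+1}$ with
\[
\int_X\frac{|g_{j+1}|^2}{|g_j|^{2(1-1/k)}}e^{-\varphi/k}\,dV\le C_{r,1}\int_{X_0}|u|^{2/k}h^{1/k}=:B.
\]

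Applying H\"older's inequality with exponents $k$ and $k/(k-1)$ exactly as in Theorem \ref{thm:Lm-extension on psc}, and writing $A_j=\int_X|F_j|^{2/k}h^{1/k}$, gives the recursion $A_{j+1}\le B^{1/k}A_j^{(k-1)/k}$, whose iterates converge monotonically to the fixed point $B$. A normal-families argument (the uniform bound on $A_j$ yields local uniform bounds via the sub-mean-value property) then extracts a holomorphic limit $s$ of a subsequence of $(F_j)$; since all $F_j$ share the restriction $u\wedge dt^{\otimes k}$ on $X_0$ and lie in $\mathcal I_k(h)$, so does $s$, and Fatou's lemma yields $\int_X|s|^{2/k}h^{1/k}\le B=C_{r,1}\int_{X_0}|u|^{2/k}h^{1/k}$, which is the assertion.

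The step I expect to be most delicate is the bookkeeping that identifies the right-hand side of Theorem \ref{thm:Demailly OT} with the intrinsic $L^{2/k}$ integral over the fibre. This requires tracking the relative-canonical isomorphism of \S \ref{subsec:Bergman relative case} together with the Jacobian factor $|\Lambda^r(ds)|^{-2}$, and checking that the singular weight $\psi_j$, which blows up along $\mathrm{div}\,F_j$, is admissible for the $L^2$ extension theorem (this is precisely where the semipositivity of the current $[\mathrm{div}\,F_j]$ is used). A secondary technical point is guaranteeing finiteness of $A_1$ and membership of every $F_j$ in $\mathcal I_k(h)$, for which the sheaf-theoretic hypothesis on $s_0$ and the properness of $p$ are essential.
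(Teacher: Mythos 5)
Your proposal is correct and follows essentially the same route as the paper: an initial global extension from the hypothesis on $s_0$ (via Cartan's Theorem B and shrinking/properness to get $A_1<\infty$), then the Berndtsson--P\u{a}un iteration in which each extension $F_j$ induces the auxiliary metric $\bigl(1/|F_j|^2\bigr)^{(k-1)/k}h^{1/k}$ on $(k-1)K_X\otimes L$, Theorem \ref{thm:Demailly OT} plus H\"older give the recursion $A_{j+1}\le C_{r,1}^{1/k}A_j^{(k-1)/k}$, and a limit of the $F_j$ yields $s$. Your write-up is in fact slightly more careful than the paper's on the curvature of the auxiliary weight and on the normal-families/Fatou step used to pass to the limit.
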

\begin{proof}
We assume that $\|u\|_k=1$.
As explained above, there is a $\tilde s_1\in H^0(B,\mathcal E)$ such that $\tilde s_1|_{X_0}=u$.
Let $s_1=\tilde s_1\wedge dt$.
By replacing $B$ by a relatively smaller ball,
we can assume that  $\int_X |s_1|^{2/k}h^{1/k}\leq A<\infty$ for some constant $A$.
The section $s_1$ induces a singular Hermitian metric
$$h_1=\left(\frac{1}{|s_1|^2}\right)^{\frac{k-1}{k}}h^{\frac{1}{k}}$$
on $(k-1)K_X\otimes L$, whose curvature current is positive.
By Theorem \ref{thm:Demailly OT}, there is a section $s_2\in H^0(X, K_X\otimes ((k-1)K_X\otimes L))$
such that
$$\int_X\frac{|s_2|^2}{|s_1|^{2-2/k}}h^{1/k}\leq C_{r,1}\int_{X_0}\frac{|u|^{2}}{|s_1|^{2-2/k}}h^{1/k}=C_{r,1}.$$
By H\"{o}lder's inequality,
\begin{align*}
\int_X |s_2|^{2/k}h^{1/k}&\leq \Big(\int_X\frac{|s_2|^2}{|s_1|^{2-2/k}}h^{1/k}\Big)^{1/k}\Big(\int_X |s_1|^{2/k}h^{1/k}\Big)^{(k-1)/k}\\
&\leq C_{r,1}^{1/k}A^{(k-1)/k}=A(C_{r,1}/A)^{1/k}=:A_1.
\end{align*}
We can assume $A>C_{r,1}$, then $A_1<A$.
Repeating the same argument with $s_1$ replaced by $s_2$, etc,
we get a decreasing sequence of constants $A_k$, such that
\begin{align*}
A_{k+1}=A_k(C_{r,M}/A_k)^{1/m}
\end{align*}
for $k\geq 1$.
It is easy to see that $A_k$ tends to $C_{r,M}$.
Taking limit, we obtain a section $s$ that satisfies the condition in the theorem.
\end{proof}

\section{Regularity of Bergman kernel metrics and Hodge-type metrics}\label{sec:regularity}
The aim of this section is to show certain continuity of relative $m$-Bergman kernel metrics and Hodge-type metrics on direct image sheaves.

\subsection{For families of compact K\"ahler manifolds}\label{sebsec:beg regu proj family}
Let $X, Y$ be K\"ahler  manifolds of  dimension $m+n$ and $m$ respectively,
let $p:X\rightarrow Y$  be a proper holomorphic submersion.
Let $L$ be a holomorphic line bundle over $X$, and $h$ be a singular  Hermitian metric on $L$,
whose curvature current is semi-positive. Let $K_{X/Y}$ be the relative canonical bundle on $X$.

Let $\mathcal E_k=p_*(kK_{X/Y}\otimes L\otimes \mathcal I_k(h))$ be the direct image sheaf on $Y$.
By Grauert's theorem, $\mathcal E_k$ is a coherent analytic sheaf on $Y$.
We assume that $\mathcal E_k$ is locally free,
then it is the sheaf of holomorphic sections of a holomorphic vector bundle,
which will be denoted by $E_k$.
For any $y\in Y$,
we can identify the fiber $E_{k,y}$ of $E_k$ at $y$ with $H^0(X_y, (kK_{X/Y}\otimes L\otimes \mathcal I_k(h))|_{X_y})\subset H^0(X_y, kK_{X_y}\otimes L|_{X_y})$.
For $u\in E_{k,y}$, as in \S \ref{subsec:Bergman manifold-case},
the $k$-norm of $u$ is defined to be
$$H_y(u):=\|u\|_k=\left(\int_{X_y}|u|^{2/k}h^{1/k}\right)^{k/2}\leq +\infty.$$
Note that here we view $u$ as an element in $H^0(X_y, kK_{X_y}\otimes L|_{X_y})$.
Then $H$ is a Finsler metric on $E_k$.
It is clear that $H$ is locally bounded below by positive constants.
The following proposition shows that $H$ is lower semicontinuous.

\begin{prop}[\cite{HPS16}]\label{prop:Hodge metric lower semi-continuous:compact}
Let $s$ be a holomorphic section of $E_k$.
The function $|s|_k(y):=\|s(y)\|_k:Y\rightarrow [0,+\infty]$ is lower semi-continuous.
\end{prop}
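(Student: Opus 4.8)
The plan is to reduce the statement to a standard application of Fatou's lemma, the only non-formal ingredient being that the local weights of $h$ are upper semicontinuous, so that the measure $|u|^{2/k}h^{1/k}$ depends lower-semicontinuously on the point of the total space. Since $t\mapsto t^{k/2}$ is continuous and nondecreasing on $[0,+\infty]$, it suffices to prove lower semicontinuity of
$$G(y):=\int_{X_y}|s(y)|^{2/k}h^{1/k}=\|s(y)\|_k^{2/k}.$$
As lower semicontinuity is a local and sequential property, I would fix $y_0\in Y$ and an arbitrary sequence $y_j\ra y_0$, and aim to show $\liminf_j G(y_j)\geq G(y_0)$.

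First I would set up convenient coordinates. Because $p$ is a holomorphic submersion, around each point of the compact fiber $X_{y_0}$ there are holomorphic coordinates $(t,z)=(t_1,\dots,t_m,z_1,\dots,z_n)$ on an open set $W\subset X$ with $p(t,z)=t$ and $W\cap X_t=\{t\}\times B$ for a fixed ball $B\subset\mc^n$. Covering $X_{y_0}$ by finitely many such charts $W_1,\dots,W_N$ and shrinking a neighborhood $V\ni y_0$ so that $\overline{p^{-1}(V)}$ is compact (using properness) and contained in $\bigcup_i W_i$, I choose a partition of unity $\{\chi_i\}$ on $p^{-1}(V)$ subordinate to $\{W_i\}$. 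Since $s$ is a holomorphic section of $E_k=p_*(kK_{X/Y}\otimes L\otimes\mathcal I_k(h))$, it is given by a holomorphic section $\tilde s$ of $kK_{X/Y}\otimes L$ over $p^{-1}(V)$ with $\tilde s|_{X_y}=s(y)$; in the $i$-th chart $K_{X/Y}$ is framed by $dz=dz_1\wedge\cdots\wedge dz_n$ and $L$ by a frame $e_i$ with $|e_i|^2_h=e^{-\varphi_i}$, so that $\tilde s=f_i(t,z)\,(dz)^{\otimes k}\otimes e_i$ with $f_i$ holomorphic. Using the intrinsic description of the pluricanonical density, for $y\in V$ one then has
$$G(y)=\sum_{i=1}^N\int_{B}\chi_i(y,z)\,|f_i(y,z)|^{2/k}\,e^{-\varphi_i(y,z)/k}\,c_n\,d\lambda(z),$$
where $c_n\,d\lambda$ is the fixed positive $(n,n)$-form $i^{n^2}\,dz\wedge d\bar z$ and $\varphi_i$ is the (psh, hence upper semicontinuous) local weight of $h$.

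Now the integrand over the fixed domain $B$ with respect to the fixed Lebesgue measure $d\lambda$ is lower semicontinuous in the parameter: as $y_j\ra y_0$ we have $\chi_i(y_j,z)\ra\chi_i(y_0,z)$ and $|f_i(y_j,z)|^{2/k}\ra|f_i(y_0,z)|^{2/k}$ by continuity, while upper semicontinuity of $\varphi_i$ gives $\liminf_j e^{-\varphi_i(y_j,z)/k}\geq e^{-\varphi_i(y_0,z)/k}$; since the first two factors are nonnegative and convergent, the $\liminf_j$ of the product is $\geq$ its value at $y_0$ for every fixed $z$. Applying Fatou's lemma on each $B$ and summing the finitely many charts (using that the $\liminf$ of a finite sum dominates the sum of the $\liminf$'s) yields $\liminf_j G(y_j)\geq G(y_0)$, which is the desired lower semicontinuity.

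I expect the only point requiring real care to be the direction of semicontinuity: the argument works precisely because $\varphi$ is upper semicontinuous, so that $e^{-\varphi}$ is lower semicontinuous, which is exactly compatible with Fatou's lemma. An attempt to prove genuine continuity, or to bound $G$ from above, would instead require a domination or uniform-integrability input that the singular weight need not provide. The bookkeeping of the local product trivialization and the partition of unity is routine, as is the verification that $|u|^{2/k}h^{1/k}$ equals $|f_i|^{2/k}e^{-\varphi_i/k}\,c_n\,d\lambda$ in these coordinates.
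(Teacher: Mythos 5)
Your proof is correct and is essentially the same as the paper's: both reduce the fiber integral to an integral of a jointly lower semicontinuous density over a fixed measure space and conclude by Fatou's lemma, with the upper semicontinuity of the local weights of $h$ as the only substantive input. The only difference is in the bookkeeping — the paper uses a single Ehresmann trivialization $X\cong B\times X_0$ with a fixed K\"ahler form $\omega_0$ on the central fiber, while you use finitely many holomorphic product charts and a partition of unity — and this does not change the argument.
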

\begin{proof}
We present the proof given in \cite{HPS16}.
Without loss of generality, we assume that $Y=B$, which is  the unit ball in $\mathbb{C}^m$
and prove that $|s|_k$ is lower semicontinuous at the origin 0.
Denote by $(t_1,\cdots, t_m)$ the standard coordinate system on $B$,
then the canonical bundle $K_B$ is trivialized by the global section $dt=dt_1\wedge \cdots \wedge dt_m$,
and the volume form on $B$ is
\begin{align*}
d\mu=c_mdt\wedge d\overline{t}.
\end{align*}
Denote by
\begin{align*}
\beta=s\wedge (dt)^{\otimes k}\in H^0(B,kK_B\otimes E_k)\simeq H^0(X,kK_X\otimes L\otimes \mathcal{I}_k(h)).
\end{align*}
Since $p:X\rightarrow B$ is a submersion, Ehresmann's fibration theorem
shows that $X$ is diffeomorphic to the product $ B\times X_0$.
Choosing a K\"{a}hler metric $\omega_0$ on $X_0$,
we can write
\begin{align*}
|\beta|^{2/k}h^{1/k}=F\cdot d\mu\wedge\frac{\omega_0^{n}}{n!}
\end{align*}
where $F:B\times X_0\rightarrow [0,+\infty]$ is lower semi-continous and locally integrable;
the reason is that the local weights for $(L,h)$ are upper semi-continuous.
At every point $y\in B$, we have that
\begin{align*}
|s(y)|_{k,y}=\Big(\int_{X_0}F(y,-)\frac{\omega_0^n}{n!}\Big)^{k/2}
\end{align*}
By Fubini's theorem,
the function $|s|_k$ is measurable function on $B$.
Moreover, since $F$ is locally integrable and $X_0$ is compact,
$\|s(y)\|_{k}<+\infty$ for almost every $y\in B$.

We now need to show that
\begin{align}\label{lower-compact-ineq}
|s(0)|_{k}\leq \liminf\limits_{j\rightarrow +\infty}|s(y_k)|_{k}
\end{align}
holds for every sequence $y_1,y_2,\cdots\in B$  which converges to the origin.
By the lower semi-continuity of $F$ and Fatou's lemma, we obtain
\begin{align*}
\int_{X_0}F(0,-)\frac{\omega_0^n}{n!}&\leq \int_{X_0}\liminf\limits_{k\rightarrow +\infty}F(y_k,-)\frac{\omega_0^n}{n!}\\
&\leq \liminf\limits_{k\rightarrow +\infty}\int_{X_0}F(y_k,-)\frac{\omega_0^n}{n!}.
\end{align*}
This completes the proof of this proposition.
\end{proof}

The lower semicontinuity of the Finsler metric $H$ on $E_k$ does not implies automatically
the upper semicontinuity of its dual metric $H^*$ on $E_k^*$.
But in our case, $H^*$ is indeed upper semicontinuous, as shown in the following proposition.

\begin{prop}\label{prop:dual Hodge metric upper semi-cont.:compact}
With the same notations and assumptions as in Proposition \ref{prop:Hodge metric lower semi-continuous:compact},
For every $\xi\in H^0(Y, E^*_k)$, the function $|\xi|(y):=H^*(\xi(y)):Y\ra [0,+\infty]$ is  upper semi-continuous.
\end{prop}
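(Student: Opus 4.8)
The plan is to establish upper semicontinuity of $|\xi|(y) = H^*(\xi(y))$ by combining the lower semicontinuity of $H$ (Proposition \ref{prop:Hodge metric lower semi-continuous:compact}) with a suitable pointwise description of the dual norm. By Definition \ref{def:dual finsler metric}, for $\xi(y) \in E^*_{k,y}$ we have
\begin{align*}
|\xi|(y) = \sup\{|\xi(y)(v)|\,;\, v\in E_{k,y},\ H_y(v)\leq 1\},
\end{align*}
so $|\xi|(y)$ is a supremum of quantities controlled by $H_y$. The key difficulty is that the dual norm is a supremum over the fiber $E_{k,y}$, and these fibers vary with $y$; a supremum of continuous (or upper semicontinuous) functions is in general only lower semicontinuous, so the naive approach gives the wrong direction of semicontinuity. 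To obtain upper semicontinuity I will instead realize $|\xi|(y)$ as an infimum over holomorphic sections that attain the pairing with $\xi$, using the extension machinery from Section \ref{sec:Ohsawa-Takegoshi}.

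First I would fix $y_0 \in Y$ (reducing to $Y = B$ and $y_0 = 0$ as in the previous proposition) and choose a local frame for $E_k$ near $y_0$, so that $\xi$ is represented by a holomorphic family of functionals. The strategy is to show that for any sequence $y_j \to 0$, one has $\limsup_j |\xi|(y_j) \leq |\xi|(0)$. To do this I would start from a nearly optimal competitor: given $\varepsilon > 0$, pick $v_j \in E_{k,y_j}$ with $H_{y_j}(v_j) \leq 1$ and $|\xi(y_j)(v_j)| \geq |\xi|(y_j) - \varepsilon$. The main obstacle is to transfer these fiberwise competitors $v_j$ into the central fiber $E_{k,0}$ while controlling their $k$-norms; this is precisely where Theorem \ref{thm:Lm-extension projective} enters, allowing one to extend a holomorphic section over $X_{y_j}$ (or rather to produce, by extension and restriction, an admissible competitor on $X_0$) with a controlled loss in the $L^{2/k}$-norm governed by the constant $C_{r,1}$.

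The crucial point is that the extension constant $C_{r,1}$ in Theorem \ref{thm:Lm-extension projective} does not depend on the base point, and by the local boundedness of $H$ from below (stated just before Proposition \ref{prop:Hodge metric lower semi-continuous:compact}) together with the lower semicontinuity of $H$, the normalizations can be controlled uniformly as $y_j \to 0$. I expect the argument to proceed by passing to a weak limit: after normalizing, the competitors $v_j$ converge (along a subsequence, by a normal families or Montel-type argument on the compact fiber $X_0$, using that $H_{y_j}(v_j)$ is bounded and the weights are uniformly controlled) to a limit $v_\infty \in E_{k,0}$, and lower semicontinuity of $H$ forces $H_0(v_\infty) \leq \liminf_j H_{y_j}(v_j) \leq 1$. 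Since $\xi$ is holomorphic, the pairing passes to the limit, giving $|\xi(0)(v_\infty)| = \lim_j |\xi(y_j)(v_j)| \geq \limsup_j |\xi|(y_j) - \varepsilon$, whence $|\xi|(0) \geq \limsup_j |\xi|(y_j) - \varepsilon$. Letting $\varepsilon \to 0$ yields the desired inequality. The hardest step will be the compactness/limit extraction for the $v_j$: one must ensure that a norm bound in the $L^{2/k}$ (non-convex when $k>1$) sense still yields enough compactness to extract a holomorphic limit on $X_0$, which I would handle by working with the honest $L^2$ structure on the finite-dimensional fiber $E_{k,0}$ after using the uniform lower bound on $H$ to compare the $k$-norm with a fixed reference norm.
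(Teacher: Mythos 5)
Your proposal follows essentially the same route as the paper: pick near-optimal competitors $u_j\in E_{k,y_j}$ with $\|u_j\|_k=1$, extend them to global sections with uniformly bounded $L^{2/k}$-norm via Theorem \ref{thm:Lm-extension projective}, extract a limit by Montel's theorem, and use Fatou's lemma together with the lower semicontinuity of the local weights to conclude that the limiting competitor on the central fiber has $k$-norm at most $1$, so that the pairing with $\xi(0)$ bounds $\limsup_j|\xi|(y_j)$. The only cosmetic difference is that the paper applies the normal-families argument directly to the global extensions $s_j$ on $X$ (rather than to the fiberwise data, which sidesteps your concern about the non-convexity of the $k$-norm), but the substance is identical.
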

\begin{proof}
Our proof here is based on the idea in \cite{HPS16} and Theorem \ref{thm:Lm-extension projective}.
Without loss of generality, we assume that $Y=B$, the unit ball in $\mathbb{C}^m$.
It suffices to prove $|\xi|_k$ is upper semi-continuous at the origin of $B$.
We need to show that
\begin{align*}
\limsup\limits_{j\rightarrow +\infty}|\xi|(y_j)\leq |\xi|(0)
\end{align*}
for every sequence $y_1,y_2,\cdots\in B$ which converges to the origin.
We may assume that $|\xi|(y_j)\neq 0$ for all $k\in \mathbb{N}$,
and that the sequence $|\xi|(y_j)$ actually has a  limit.
By the lower semicontinuity of $H$ on $E_k$ as shown in Proposition \ref{prop:Hodge metric lower semi-continuous:compact},
$|\xi|(y_j)\neq +\infty$ for all $j$.
From the definition of the dual metric, for each $j\in \mathbb{N}$,
there is a holomorphic section $u_j\in E_{k,y_j}$, such that $\|u_j\|_k=1$
and  $|\langle \xi(y_j),u_j\rangle|=|\xi|(0)$.
By Theorem \ref{thm:Lm-extension projective}, there are section $s_j\in H^0(X, kK_X\otimes L)$
such that $\int_X|s_j|^{2/k}h^{1/k}\leq C$ for some constant $C>0$ independent of $j$.
By Montel's theorem, we may assume $s_j$ converges uniformly on compact sets of $X$ to some $s\in H^0(X, kK_X\otimes L)$.
Then $\lim_{j\ra\infty}<\xi(0),u_j>=<\xi(0), u:=s(0)>$.
It suffices to prove that $\|u\|_k\leq 1$.
From the proof of Proposition \ref{prop:Hodge metric lower semi-continuous:compact},
each $s_j$ determines a lower semi-continuous function $F_j:B\times X_0\rightarrow [0,+\infty]$ with
\begin{align*}
1=\|u_j\|_k=\left(\int_{X_0}F_j(y_j,-)\frac{\omega_0^n}{n!}\right)^{k/2}.
\end{align*}
In the same way, $s$ determines a lower semi-continuous function $F:B\times X_0\rightarrow [0,+\infty]$.
Since the local weight $e^{-\varphi}$ of $h$ is lower semi-continuous,
and $s_j$ converges uniformly on compact subsets to $s$, we get
\begin{align*}
F(0,-)\leq \liminf\limits_{j\rightarrow +\infty}F_j(y_j,-).
\end{align*}
Then by Fatou's lemma, we complete the proof of this proposition.
\end{proof}

A direct consequence of Proposition \ref{prop:dual Hodge metric upper semi-cont.:compact} is the following

\begin{cor}\label{cor:m-Berg continuous:compact case}
For any $m\geq 1$, the relative $m$-Bergman kernel metric (see \S \ref{subsec:Bergman relative case} for definition) on $mK_{X/Y}\otimes L$ is lower semi-continuous,
namely, the norm of any local holomorphic section of $mK_{X/Y}\otimes L$ with respect to the relative $m$-Bergman kernel metric
is lower semi-continuous.
\end{cor}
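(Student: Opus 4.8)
The plan is to reduce the statement about the relative $m$-Bergman kernel metric on $mK_{X/Y}\otimes L$ directly to the upper semicontinuity established in Proposition \ref{prop:dual Hodge metric upper semi-cont.:compact}. First I would fix a local holomorphic section $w$ of $mK_{X/Y}\otimes L$ over an open set $V\subset Y$ and a point $y_0\in V$, and aim to prove that the function $y\mapsto |w(y)|_{B^m}$, where $B^m$ denotes the relative $m$-Bergman kernel metric, is lower semicontinuous at $y_0$. The central observation is that the relative $m$-Bergman kernel metric on $mK_{X/Y}\otimes L$ is, fiberwise, precisely the $m$-Bergman kernel metric on $mK_{X_y}\otimes L|_{X_y}$ constructed in \S \ref{subsec:Bergman manifold-case}, and by the very definition of that metric (an infimum of $m$-norms of global sections taking the prescribed value at the point), it is exactly the dual of the Hodge-type Finsler metric $H$ on the direct image bundle $E_m$.

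The key step is to identify $|w(y)|_{B^m}$ with the value of a dual metric $H^*$ applied to an appropriate section of $E_m^*$. Indeed, evaluating a section $u\in E_{m,y}=H^0(X_y, mK_{X_y}\otimes L|_{X_y})$ at a point of the fiber is a linear functional, so a local frame together with the point determines a holomorphic section $\xi$ of $E_m^*$ near $y_0$ whose norm $H^*(\xi(y))$ reproduces the $m$-Bergman kernel metric on $mK_{X/Y}\otimes L$. More precisely, I would express $w$ in a local trivialization, use the pointwise definition
\begin{align*}
|w(y)|_{B^m}^2=\inf\{\|u\|_m^2\ :\ u\in E_{m,y},\ u(x(y))=w(y)\},
\end{align*}
and recognize the reciprocal of this quantity as $H^*(\xi(y))^2$ for the evaluation functional $\xi(y)$. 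Once this identification is in place, Proposition \ref{prop:dual Hodge metric upper semi-cont.:compact} gives that $y\mapsto H^*(\xi(y))$ is upper semicontinuous, and hence its reciprocal—the $m$-Bergman kernel metric applied to $w$—is lower semicontinuous, which is exactly the claim.

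The main obstacle I anticipate is the careful bookkeeping of the canonical isomorphisms relating $mK_{X_y}\otimes L|_{X_y}$ to $(mK_{X/Y}\otimes L)|_{X_y}$, as described in \S \ref{subsec:Bergman relative case}, and ensuring that the evaluation functional $\xi$ depends holomorphically (or at least measurably in a way compatible with the semicontinuity argument) on $y$. One must check that the choice of local frame $dt^{\otimes m}\otimes e$ used to trivialize $mK_{X/Y}\otimes L$ along fibers is compatible with the identification of fibers $E_{m,y}$ with spaces of pluricanonical sections, so that the infimum defining $|w(y)|_{B^m}$ genuinely matches the dual-norm expression. The remaining content is then a formal consequence: upper semicontinuity of $H^*$ passes to lower semicontinuity of its reciprocal, and since $w$ and $y_0$ were arbitrary, the relative $m$-Bergman kernel metric is lower semicontinuous everywhere on $Y$.
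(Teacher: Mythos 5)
Your reduction is exactly the paper's: the paper offers no argument beyond calling the corollary a direct consequence of Proposition \ref{prop:dual Hodge metric upper semi-cont.:compact}, and your identification of the fiberwise $m$-Bergman norm of a local section (in a frame $dz^{\otimes m}\otimes e$) with $|\tilde w|^2$ times the reciprocal of the squared dual Hodge norm of the evaluation functional is precisely the intended content of that remark. The only point to watch is that the norm is a function on an open subset of $X$ rather than of $Y$, so you need upper semicontinuity of $H^*$ jointly as the evaluation point varies in $X$, not merely along a fixed holomorphic section $y\mapsto x(y)$ of $p$; this is obtained by rerunning the proof of Proposition \ref{prop:dual Hodge metric upper semi-cont.:compact} with the near-extremal sections evaluated at varying points $x_j\to x_0$ (the uniform convergence on compacts supplied by Montel's theorem handles the joint variation), rather than by quoting the proposition verbatim for a single section of $E_m^*$ over $Y$.
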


\subsection{For families of pseudoconvex domains}
Let $\Omega\subset \mathbb{C}^{m+n}=\mathbb{C}_t^m\times \mathbb{C}_z^n$ be a pseudo-convex domain.
Let $p:\Omega\rightarrow \mathbb{C}^m$ be the natural  projection.
We denote $p(\Omega)$ by $D$ and denote $p^{-1}(t)$ by $\Omega_t$ for $t\in D$.
Let $\varphi$ be a plurisubharmonic function on $\Omega$ and let $k\geq 1$ be an fixed integer.
For an open subset $U$ of $D$, we denote by $\mathcal F(U)$ the space of holomorphic functions $F$ on $p^{-1}(U)$ such that
$\int_{p^{-1}(K)}|F|^{2/k}e^{-\varphi}\leq\infty$ for all compact subset $K$ of $D$.
For $t\in D$, let
$$E_{k,t}=\{F|_{\Omega_t}:F\in\mathcal F(U),\ U\subset D\ \text{open\ and}\ t\in U\}.$$
$E_{k,t}$ is a vector space and we define a norm on it as follows:
$$H(f):=|f|_k=\left(\int_{D_t}|f|^{2/k}e^{-\varphi_t}\right)^{k/2}\leq\infty,$$
where $\varphi_t=\varphi|_{D_t}$.
Let $E_k=\coprod_{t\in D}E_{k,t}$ be the disjoint union of all $E_{k,t}$.
Then we have a natural projection $\pi:E_k\ra D$ which maps elements in $E_{k,t}$ to $t$.
We view $H$ as a Finsler metric on $E_k$.

In general $E_k$ is not a genuine holomorphic vector bundle over $D$.
However, we can also talk about its holomorphic sections,
which are the objects we are really interested in.
By definition, a section $s:D\ra E_k$ is a \emph{holomorphic section} if it varies holomorphically with $t$,
namely, the function $s(t,z):\Omega\ra \mc$ is holomorphic with respect to the variable $t$.
Note that $s(t,z)$ is automatically holomorphic on $z$ for $t$ fixed,
by Hartogs theorem, $s(t,z)$ is holomorphic jointly on $t$ and $z$ and hence is a holomorphic function on $\Omega$.
In some sense, $E_k$ can be viewed as an object similar to holomorphic vector fields studied in \cite{LS14}.



Let $E^*_{k,t}$ be the dual space of $E_{k,t}$,
namely the space of all complex linear functions on $E_{k,t}$.
Let $E^*_k=\coprod_{t\in D}E^*_{k,t}$.
The natural projection from $E^*_k$ to $D$ is denoted by $\pi^*$.
Note that we do not define any topology on $E^*_{k,t}$ and $E_k$.
The only object we are interested in is holomorphic sections of $E^*_k$ which we are going to define.
Given a holomorphic section $s$ of $E_k$ on some open set $U$ of $D$,
$s$ induces a function $|s|_k:U\ra \mr$ with $|s|_k(t)$ given by $|s(t)|_k$,
which is lower semicontinuous and hence measurable, by the following Proposition \ref{prop:lower semi-continuous: noncompact}.

\begin{defn}\label{def:holo section of dual bundle}
A section $\xi$ of $E^*_{k}$ on $D$ is holomorphic if:
\begin{itemize}
\item[(1)] for any local holomorphic section $s$ of $E_k$, $<\xi, s>$ is a holomorphic function;
\item[(2)] for any sequence $s_j$ of holomorphic sections of $E_k$ on $D$ such that
$\int_D|s_j|_k\leq 1$, if $s_j(t,z)$ converges uniformly on compact subsets of $\Omega$ to $s(t,z)$ for some holomorphic section $s$ of $E_k$,
then $<\xi,s_j>$ converges uniformly to $<\xi,s>$ on compact subsets of $D$.
\end{itemize}
\end{defn}

In the same way we can define holomorphic section of $E^*_k$ on open subsets of $D$.
The Finsler metric $H$ on $E_k$ induces a Finsler metric $H^*$ on $E^*_k$,
as defined the Definition \ref{def:dual finsler metric}.
We will show that $H$ is lower semicontinuous and $H^*$ is upper semicontinuous,
as analogues of Proposition \ref{prop:Hodge metric lower semi-continuous:compact} and Proposition \ref{prop:dual Hodge metric upper semi-cont.:compact}
in the case of families of pseudoconvex domains.

\begin{prop}\label{prop:lower semi-continuous: noncompact}
With the above notations and assumptions.
Assume $s$ is a holomorphic section of $E_k$,
then the function $|s|_k(t):=H(s(t)):D\ra [0,+\infty]$ is lower semicontinuous.
\end{prop}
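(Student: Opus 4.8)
The plan is to mirror the strategy of Proposition \ref{prop:Hodge metric lower semi-continuous:compact}, with the essential new feature that the fibers $\Omega_t$ now genuinely vary with $t$ instead of all being diffeomorphic to a fixed model. Since $s$ corresponds to a holomorphic function $s(t,z)$ on $\Omega$, I would first reduce the claim to the lower semicontinuity of
$$G(t):=\int_{\Omega_t}|s(t,z)|^{2/k}e^{-\varphi(t,z)}\,d\lambda(z),$$
because $|s|_k(t)=G(t)^{k/2}$ and $x\mapsto x^{k/2}$ is continuous and increasing on $[0,+\infty]$. Writing $\Phi(t,z):=|s(t,z)|^{2/k}e^{-\varphi(t,z)}$, I note that $\Phi$ is a nonnegative lower semicontinuous function on $\Omega$: the factor $|s|^{2/k}$ is continuous since $s$ is holomorphic, while $e^{-\varphi}$ is lower semicontinuous because $\varphi$ is plurisubharmonic and hence upper semicontinuous, and a product of two nonnegative lower semicontinuous functions is lower semicontinuous.

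Fix $t_0\in D$ and a sequence $t_j\ra t_0$ in $D$; the goal is $\liminf_{j}G(t_j)\geq G(t_0)$. To deal with the moving domain of integration, I would work over a fixed compact piece of the central fiber. Given any compact set $K\subset\Omega_{t_0}$, viewed as a compact subset of $\mc^n_z$, the set $\{t_0\}\times K$ is a compact subset of the open set $\Omega$, so a tube-lemma argument produces $\delta>0$ with $\{t:|t-t_0|<\delta\}\times K\subset\Omega$. Consequently $K\subset\Omega_{t_j}$ for all $j$ large, and for such $j$
$$G(t_j)\geq\int_K\Phi(t_j,z)\,d\lambda(z).$$
Applying Fatou's lemma together with the lower semicontinuity estimate $\liminf_j\Phi(t_j,z)\geq\Phi(t_0,z)$ yields
$$\liminf_{j}G(t_j)\geq\liminf_{j}\int_K\Phi(t_j,z)\,d\lambda(z)\geq\int_K\Phi(t_0,z)\,d\lambda(z).$$

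Finally, I would let $K$ exhaust $\Omega_{t_0}$: since $\Omega_{t_0}$ is open in $\mc^n$, hence $\sigma$-compact, and $\Phi(t_0,\cdot)\geq 0$ is measurable, the monotone convergence theorem gives $\sup_K\int_K\Phi(t_0,z)\,d\lambda(z)=\int_{\Omega_{t_0}}\Phi(t_0,z)\,d\lambda(z)=G(t_0)$, the supremum being taken over compact $K\subset\Omega_{t_0}$. Taking this supremum in the previous inequality then gives $\liminf_j G(t_j)\geq G(t_0)$, proving that $G$, and therefore $|s|_k$, is lower semicontinuous. The one point requiring genuine care, which is absent in the compact case, is precisely this control of the varying domains: the openness of $\Omega$ through the tube lemma is what guarantees that a fixed compact test set $K$ eventually sits inside every nearby fiber, so that Fatou's lemma may be applied on a fixed domain before the exhaustion is carried out.
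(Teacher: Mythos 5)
Your proposal is correct and follows essentially the same route as the paper: both arguments exhaust the central fiber by compact sets, use the lower semicontinuity of the fiber correspondence $t\mapsto\Omega_t$ (your tube-lemma step) to fix a compact test domain inside all nearby fibers, and then combine the lower semicontinuity of $e^{-\varphi}$ with Fatou's lemma before passing to the exhaustion. Your write-up is in fact somewhat more explicit than the paper's about where Fatou and monotone convergence enter.
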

\begin{proof}
We assume $0\in D$ and prove that $|s|_k$ is lower semicontinuous for a point $0$.
Let $K_1\Subset K_2\Subset \cdots\Subset K_j\Subset\cdots\Subset \Omega_0$ be an
increasing sequence of compact subsets of $\Omega_0$, such that $\cup_jK_j=\Omega_0$.
Since the set valued function $t\rightarrow \Omega_t$ is lower semi-continuous,
in the sense that if $\Omega_t$ contains a compact set $K$, then $K$ is contained
in all $\Omega_s$ for $s$ sufficiently close to $t$.
Thus for any $j$, there is a small disk $B_j\subset D$ centered at $a$,
such that $B_j\times K_j\Subset \Omega$.
Note that $e^{-\varphi}$ is lower semicontinuous,
hence $\liminf_{t\ra 0}|s|_k(t)\geq \left(\int_{K_j}|s(0,z)|^{2/k}e^{-\varphi_t}\right)^{k/2}$ for all $j$.
Letting $j$ goes to $\infty$, we get $\liminf_{t\ra 0}|s|_k(t)\geq |s|_k(0)$.
\end{proof}

The following lemma shows that $|\xi|_k(t)$ can not take value $+\infty$ anywhere.

\begin{lem}\label{lem:dual norm finite}
Let $\xi$ be a holomorphic section of $E^*_k$, then $|\xi|_k(t)<+\infty$ for all $t\in D$.
\end{lem}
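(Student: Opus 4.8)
The plan is to argue by contradiction, using the $L^{2/k}$-extension theorem to turn a hypothetical unbounded functional on a single fibre into a sequence of global sections converging to zero, and then to invoke the continuity built into the definition of a holomorphic section of $E^*_k$. Suppose $|\xi|_k(t_0)=+\infty$ for some $t_0\in D$. By the definition of the dual Finsler metric (Definition \ref{def:dual finsler metric}), there are $u_j\in E_{k,t_0}$ with $\|u_j\|_k\le 1$ and $|\langle\xi(t_0),u_j\rangle|\to+\infty$. Setting $w_j:=u_j/\langle\xi(t_0),u_j\rangle$ and using the linearity of $\xi(t_0)$, I may assume $\langle\xi(t_0),w_j\rangle=1$ for all $j$ while $\|w_j\|_k\to 0$.

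Next I would extend each $w_j$ to a global section. Since $\Omega$ projects into a bounded set, so that the constant in Theorem \ref{thm:Lm-extension on psc} is available, that theorem produces $W_j\in\mathcal O(\Omega)$ with $W_j|_{\Omega_{t_0}}=w_j$ and
\begin{equation*}
\int_\Omega |W_j|^{2/k}e^{-\varphi}\le C\int_{\Omega_{t_0}}|w_j|^{2/k}e^{-\varphi_{t_0}}=C\,\|w_j\|_k^{2/k}\longrightarrow 0 .
\end{equation*}
Each $W_j$ is a holomorphic section of $E_k$ over $D$. Because $|W_j|^{2/k}$ is plurisubharmonic and $e^{-\varphi}$ is locally bounded below by a positive constant, the sub-mean value inequality upgrades the vanishing of these weighted $L^{2/k}$-norms to locally uniform convergence $W_j\to 0$ on $\Omega$; thus $s:=0$ is the locally uniform limit of the $W_j$.

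It then remains to check the normalization hypothesis of Definition \ref{def:holo section of dual bundle}(2), namely $\int_D|W_j|_k\le 1$ for large $j$. By Fubini, $\int_D|W_j|_k^{2/k}=\int_\Omega|W_j|^{2/k}e^{-\varphi}\to 0$, and since $D$ is bounded, Hölder's inequality (with exponent $2/k\ge 1$, and, for larger $k$, after shrinking to a ball $B\ni t_0$ on which $|W_j|_k$ is controlled fibrewise) gives $\int_D|W_j|_k\to 0$, so this integral is $\le 1$ once $j$ is large. Applying condition (2) of Definition \ref{def:holo section of dual bundle} to the sequence $W_j\to 0$, I conclude $\langle\xi,W_j\rangle\to\langle\xi,0\rangle=0$ uniformly on compact subsets of $D$; evaluating at $t_0$ yields $\langle\xi(t_0),w_j\rangle\to 0$, contradicting $\langle\xi(t_0),w_j\rangle=1$. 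Hence $|\xi|_k(t_0)<+\infty$.

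The step I expect to be the main obstacle is the passage from the weighted $L^{2/k}$-bound on $W_j$ over $\Omega$ to the bound $\int_D|W_j|_k\le 1$ on the \emph{fibrewise} Finsler norm, hand in hand with the locally uniform convergence $W_j\to 0$: the fibres $\Omega_t$ are non-compact, so the tails of the fibre integrals must be controlled uniformly in $t$ near $t_0$ rather than only on compact pieces of the fibres (this is where the argument genuinely differs from the compact-fibre case treated in Proposition \ref{prop:lower semi-continuous: noncompact}). For $k\le 2$ this follows cleanly from boundedness of $D$ and Hölder; for larger $k$ one should localize near $t_0$ and use a fibrewise sub-mean value estimate to bound $|W_j|_k$ pointwise before integrating.
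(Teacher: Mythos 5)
Your argument is correct and essentially the same as the paper's: both proceed by contradiction, extend the fibre elements to global sections via Theorem \ref{thm:Lm-extension on psc}, and conclude with condition (2) of Definition \ref{def:holo section of dual bundle}; the only differences are that you normalize so that $\langle\xi(t_0),w_j\rangle=1$ and identify the limit section as $0$ directly by the sub-mean value inequality, whereas the paper keeps $\|u_j\|_k=1$ and uses Montel's theorem to extract a convergent subsequence. The normalization issue you flag as the main obstacle --- converting the bound on $\int_D|W_j|_k^{2/k}=\int_\Omega|W_j|^{2/k}e^{-\varphi}$ supplied by the extension theorem into the bound on $\int_D|W_j|_k$ literally required in Definition \ref{def:holo section of dual bundle}(2) --- is a genuine point of care, but it is present in the paper's own proof as well, which silently writes $\int_D|s_j|_k\le C$ for the quantity the extension theorem actually controls; so your fix for $k>2$ (or a reading of the definition with the exponent $2/k$) is needed in either version.
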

\begin{proof}
We argue by contradiction.
Assume $0\in D$ and $|\xi(0)|_k=+\infty$.
By definition, there is a sequence $\{u_j\}\subset E_{k,0}$ such that $|u_j|_k=1$ and $\lim_{j\ra \infty}<\xi(0),u_j>=+\infty$.
By Theorem \ref{thm:Lm-extension on psc}, there are holomorphic sections $s_j$ of $E_k$ such that $s_j(0)=u_j$ and
$\int_D|s_j|_k\leq C$ for some constant $C$ independent of $j$.
By Montel's theorem there is a subsequence of $\{s_j\}$, may assumed to be $\{s_j\}$ itself,
that converges uniformly on compact subsets of $\Omega$ to some holomorphic section $s$ of $E_k$.
By definition, $<\xi, s_j>$ converges uniformly on compact sets of $D$ to $<\xi,s>$.
In particular, $<\xi(0), u_j>$ converges to $<\xi(0), s(0)>\leq +\infty$,
which is a contradiction.
\end{proof}

\begin{prop}\label{prop: dual hodge u.s.c domain}
Let $\xi:D\ra E^*_k$ be a holomorphic section of $E^*_k$.
Then the function $|\xi|_k(t):=H^*(\xi(t)):D\ra [0,+\infty]$ is upper semicontinuous.
\end{prop}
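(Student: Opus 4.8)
The plan is to mirror the strategy used in the compact case (Proposition \ref{prop:dual Hodge metric upper semi-cont.:compact}), replacing the projective extension theorem by its pseudoconvex-domain analogue, Theorem \ref{thm:Lm-extension on psc}. Without loss of generality I would assume $0\in D$ and prove upper semicontinuity at the origin, i.e.\ that $\limsup_{j\to\infty}|\xi|_k(t_j)\leq |\xi|_k(0)$ for any sequence $t_j\to 0$ in $D$. By Lemma \ref{lem:dual norm finite} we already know $|\xi|_k(t)<+\infty$ everywhere, so all quantities in play are finite; I may also discard the trivial cases and pass to a subsequence along which $|\xi|_k(t_j)$ converges to its $\limsup$.

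The first main step is to extract near-extremal vectors. From the definition of the dual Finsler metric $H^*$, for each $j$ I can choose $u_j\in E_{k,t_j}$ with $|u_j|_k=1$ (i.e.\ $\int_{\Omega_{t_j}}|u_j|^{2/k}e^{-\varphi_{t_j}}=1$) such that $|\langle\xi(t_j),u_j\rangle|$ is within $1/j$ of $|\xi|_k(t_j)$. Next I would invoke Theorem \ref{thm:Lm-extension on psc} to extend each $u_j$ to a holomorphic section $s_j$ of $E_k$ over $D$ (after shrinking $D$ to a relatively compact subdomain if needed), with the uniform bound
\begin{align*}
\int_{\Omega}|s_j|^{2/k}e^{-\varphi}\leq C_{r,M}\int_{\Omega_{t_j}}|u_j|^{2/k}e^{-\varphi_{t_j}}=C_{r,M},
\end{align*}
a constant independent of $j$ since $D$ is bounded. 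This $L^{2/k}$ bound, via the sub-mean-value property of plurisubharmonic-type weights and the local boundedness of $e^{-\varphi}$ from below, gives local uniform bounds on the $s_j$, so by Montel's theorem I may pass to a subsequence converging uniformly on compact subsets of $\Omega$ to a holomorphic section $s$ of $E_k$.

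The second main step is to pass to the limit on both sides. On the pairing side, the defining property (2) in Definition \ref{def:holo section of dual bundle} of a holomorphic section of $E^*_k$ is exactly tailored for this: since $\int_D|s_j|_k$ is uniformly bounded and $s_j\to s$ uniformly on compact sets, $\langle\xi,s_j\rangle\to\langle\xi,s\rangle$ uniformly on compact subsets of $D$; evaluating at (or near) the origin yields $\lim_j|\langle\xi(t_j),u_j\rangle|=|\langle\xi(0),s(0)\rangle|$, whence $\lim_j|\xi|_k(t_j)=|\langle\xi(0),s(0)\rangle|$. On the norm side I must show $\|s(0)\|_k\leq 1$, for then $|\xi|_k(0)\geq |\langle\xi(0),s(0)\rangle|/\|s(0)\|_k\geq |\langle\xi(0),s(0)\rangle|$ closes the inequality. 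Here I would argue as in Proposition \ref{prop:lower semi-continuous: noncompact}: using that $e^{-\varphi}$ is lower semicontinuous and $s_j\to s$ uniformly on compacta, Fatou's lemma gives, on any compact $K\Subset\Omega_0$,
\begin{align*}
\int_{K}|s(0,z)|^{2/k}e^{-\varphi_0}\leq\liminf_{j\to\infty}\int_{\Omega_{t_j}}|s_j|^{2/k}e^{-\varphi_{t_j}}=\liminf_{j\to\infty}\|u_j\|_k^{2/k}=1,
\end{align*}
and exhausting $\Omega_0$ by such $K$ yields $\|s(0)\|_k\leq 1$.

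The step I expect to be the most delicate is the lower-semicontinuity comparison in this last display, because the domains of integration $\Omega_{t_j}$ vary with $j$ while the limiting integral is over $\Omega_0$. Making Fatou's lemma rigorous requires the lower semicontinuity of the set-valued map $t\mapsto\Omega_t$ (so that a fixed compact $K\Subset\Omega_0$ sits inside $\Omega_{t_j}$ for large $j$), together with the joint lower semicontinuity of the integrand $|s_j|^{2/k}e^{-\varphi}$ as both $s_j$ and the base point move — precisely the interplay of the fiberwise convergence and the lower semicontinuity of $e^{-\varphi}$. Once these regularity inputs are combined, the estimate and hence the whole proof go through.
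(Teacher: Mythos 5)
Your proposal is correct and follows essentially the same route as the paper's proof: near-extremal vectors $u_j$ with $|u_j|_k=1$, extension to global sections $s_j$ via Theorem \ref{thm:Lm-extension on psc} with a uniform bound, Montel's theorem, property (2) of Definition \ref{def:holo section of dual bundle} to pass to the limit in the pairing, and lower semicontinuity of $e^{-\varphi}$ to get $\|s(0)\|_k\leq 1$. The only difference is that you spell out the final Fatou/compact-exhaustion step in detail, which the paper disposes of in one sentence.
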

\begin{proof}
We assume $0\in D$ and prove that $|\xi|_k$ is upper semicontinuous at $0$.
We need to show that
\begin{align*}
\limsup\limits_{j\rightarrow +\infty}|\xi|_k(t_j)\leq |\xi|_k(0).
\end{align*}
for every sequence $t_1,t_2,\cdots\in D$ which converges to 0.
We may assume that $|\xi|_k(t_j)\neq -\infty$ for all $j\in \mathbb{N}$,
and that the sequence $|\xi|_k(t_j)$ actually has a  limit.
From the definition of the dual metric and Lemma \ref{lem:dual norm finite}, for each $j$, there exists $u_j\in E_{k,t_j}$,
such that $|u_j|_k=1$ and  $|\xi|_k(t_j)<|\langle \xi(t_j),u_j\rangle|+\epsilon$,
where $\epsilon>0$ is an arbitrary constant.
By Theorem \ref{thm:Lm-extension on psc},
there are holomorphic sections $s_j$ of $E_k$ such that
\begin{align*}
s_j(t_j)=u_j~~~~\mbox{~~~~and~~~} \int_D|s_j(t)|_k\leq K
\end{align*}
for some constant $K$ independent of $j$.
By Montel's theorem, there is a subsequence of $\{s_j\}$, may assumed to be $\{s_j\}$ itself,
that converges on compact subsets of $\Omega$ uniformly to some holomorphic section $s$ of $E_k$.
By definition, $<\xi, s_j>$, as holomorphic functions on $D$, converges uniformly on compact subsets of $D$ to $<\xi, s>$.
In particular $\limsup_{j\ra\infty}|\xi|_k(t_j)\leq|<\xi(t_j), u_j>|+\epsilon\ra |<\xi(0), s(0)>|+\epsilon$.
If $s(0)=0$, we are done.
We assume $s(0)\neq 0$.
Then it suffices to prove that $|s(0)|_k\leq 1$.
But this is true since $s_j$ converges to $s$ uniformly on compacts sets,
$|u_j|_k=1$,  and $e^{-\varphi}$ is lower semicontinuous.
\end{proof}

A direct consequence of Proposition \ref{prop: dual hodge u.s.c domain} is the following

\begin{cor}\label{cor:m-Bergman kernel continu}
Let $\Omega, p, \varphi$ as in the beginning of this subsection.
For any positive integer $k$, let $K_k(t,z)$ be the $k$-Bergman kernel (see \S \ref{subsec:m-Bergman kernel} for definition)
on $\Omega_t:=p^{-1}(t)$, $t\in D$, with weight $e^{\varphi_t}$.
Then the relative $k$-bergman kernel $K_k(t,z)$
is upper semi-continuous on $\Omega$.
\end{cor}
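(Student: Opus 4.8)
The plan is to identify the $k$-Bergman kernel with the dual Finsler norm $H^*$ of point-evaluation functionals, and then to run the argument of Proposition \ref{prop: dual hodge u.s.c domain} with a \emph{moving} evaluation point.

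First I would record the basic identity. For $(t,z_0)\in\Omega$, let $\mathrm{ev}_{z_0}\in E^*_{k,t}$ denote the evaluation functional $f\mapsto f(z_0)$ on $E_{k,t}$. Comparing the definition of $K_k$ in \S\ref{subsec:m-Bergman kernel} with Definition \ref{def:dual finsler metric}, one has
\begin{align*}
K_k(t,z_0)=\sup\{|f(z_0)|^2 : f\in E_{k,t},\ |f|_k\le 1\}=\big(H^*(\mathrm{ev}_{z_0})\big)^2,
\end{align*}
so it suffices to prove upper semicontinuity of $(t,z)\mapsto H^*(\mathrm{ev}_z)$. For a fixed $z_0$ the assignment $t\mapsto \mathrm{ev}_{z_0}$ is a holomorphic section of $E^*_k$ over the open set $\{t : (t,z_0)\in\Omega\}$, since conditions (1) and (2) of Definition \ref{def:holo section of dual bundle} reduce to the statements that $t\mapsto s(t,z_0)$ is holomorphic and that uniform convergence on compacta of $\Omega$ forces convergence of the point values $s_j(t,z_0)$; hence Proposition \ref{prop: dual hodge u.s.c domain} already gives upper semicontinuity in $t$ for each fixed $z_0$. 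The content of the corollary is the \emph{joint} statement, and for this I would adapt the proof of the proposition so as to let the evaluation point move as well.

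Concretely, fix $(t_0,z_0)\in\Omega$ and a sequence $(t_j,z_j)\to(t_0,z_0)$ along which $K_k(t_j,z_j)$ tends to $\limsup_{(t,z)\to(t_0,z_0)}K_k(t,z)$. For each $j$ choose a near-extremal $u_j\in E_{k,t_j}$ with $|u_j|_k=1$ and $|u_j(z_j)|^2\ge K_k(t_j,z_j)-\epsilon$. By Theorem \ref{thm:Lm-extension on psc} I would extend $u_j$ to a holomorphic section $s_j$ of $E_k$ on $\Omega$ with $s_j(t_j)=u_j$ and $\int_\Omega|s_j|^{2/k}e^{-\varphi}\le C$ uniformly in $j$; Montel's theorem then yields a subsequence converging uniformly on compact subsets of $\Omega$ to a holomorphic section $s$. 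Since $s_j(t_j,z_j)=u_j(z_j)$ and $(t_j,z_j)\to(t_0,z_0)$, uniform convergence on compacta gives $u_j(z_j)\to s(t_0,z_0)$.

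It then remains to bound $|s(t_0,z_0)|^2$ by $K_k(t_0,z_0)$, for which it suffices to show $|s(t_0)|_k\le 1$: the definition of the Bergman kernel then forces $K_k(t_0,z_0)\ge|s(t_0,z_0)|^2$. This last inequality is exactly the Fatou-type step from Propositions \ref{prop:lower semi-continuous: noncompact} and \ref{prop: dual hodge u.s.c domain}: using the lower semicontinuity of $e^{-\varphi}$, a compact exhaustion $K_1\Subset K_2\Subset\cdots$ of $\Omega_{t_0}$, and $|u_j|_k=1$, one gets $\int_{\Omega_{t_0}}|s|^{2/k}e^{-\varphi_{t_0}}\le\liminf_j\int_{\Omega_{t_j}}|s_j|^{2/k}e^{-\varphi_{t_j}}=1$. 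Letting $\epsilon\to 0$ yields $\limsup K_k(t_j,z_j)\le K_k(t_0,z_0)$. The main obstacle is precisely that Proposition \ref{prop: dual hodge u.s.c domain} only controls the variation in $t$ for a fixed functional, so the evaluation functionals $\mathrm{ev}_{z_j}$ do not assemble into a single holomorphic section of $E^*_k$; the reason the joint statement still goes through is that uniform convergence of $s_j$ on compacta, combined with convergence of the evaluation points, controls the moving point values, while the Fatou step survives the simultaneous change of fibers.
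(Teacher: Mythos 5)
Your proof is correct, and it actually does more than the paper does: the paper offers no argument for this corollary beyond declaring it a ``direct consequence'' of Proposition \ref{prop: dual hodge u.s.c domain}. Your opening observation is the right one and is worth making explicit. For a \emph{fixed} $z_0$, the evaluation functional $\mathrm{ev}_{z_0}$ is a holomorphic section of $E^*_k$ over $\{t:(t,z_0)\in\Omega\}$ and $K_k(t,z_0)=H^*(\mathrm{ev}_{z_0})^2$, so the proposition yields upper semicontinuity in $t$ alone; since the corollary asserts \emph{joint} upper semicontinuity on $\Omega$ and separate semicontinuity does not imply it, the statement is not a formal consequence of the proposition as written. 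What rescues it is exactly what you do: rerun the proof of Proposition \ref{prop: dual hodge u.s.c domain} with the evaluation point moving along with the base point. The three ingredients transfer without change --- the uniform bound $\int_\Omega|s_j|^{2/k}e^{-\varphi}\le C$ from Theorem \ref{thm:Lm-extension on psc} is independent of the fiber, Montel plus convergence of $(t_j,z_j)$ gives $u_j(z_j)=s_j(t_j,z_j)\to s(t_0,z_0)$, and the Fatou step (compact exhaustion of $\Omega_{t_0}$, lower semicontinuity of $t\mapsto\Omega_t$ and of $e^{-\varphi}$, as in Proposition \ref{prop:lower semi-continuous: noncompact}) survives the simultaneous change of fibers to give $|s(t_0)|_k\le 1$ and hence $K_k(t_0,z_0)\ge|s(t_0,z_0)|^2$. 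Two minor points you handle implicitly but could state: pass to a subsequence realizing the $\limsup$, discarding the trivial case where it is $0$; and note that the case $\limsup K_k(t_j,z_j)=+\infty$ is excluded by the same compactness argument, since $u_j(z_j)$ converges to the finite value $s(t_0,z_0)$. With those remarks your write-up is a complete proof and, in my view, the proof the paper should have recorded.
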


\begin{rem}
Let $\xi$ be a holomorphic section of $E^*_k$.
By Lemma \ref{lem:dual norm finite} and Theorem \ref{prop: dual hodge u.s.c domain},
$|\xi|_k(t)$ is locally bounded above by positive constants.
On the other hand, when $k=1$, it is not difficult to show that a section of $E^*_k$
is holomorphic if it satisfies condition (1) in Definition \ref{def:holo section of dual bundle}
and its norm is locally bounded above.
It seems that the same result should be true for general $k$,
but we can not give a proof right off the bat.
\end{rem}

\begin{rem}
For the case that $\Omega=D\times D'$ is a product domain and $\varphi$ is bounded on $\Omega$,
by the mean value inequality, $|f|_k<\infty$ for any $f\in E_{k,t}$, $t\in D$.
If in addition that $k=1$, $E_{1,t}$ consists of square integrable holomorphic functions
on $D'$ with respect to the weight $e^{-\varphi_t}$,
which is the setting considered by Berndtsson in \cite{Bob06}.
\end{rem}

\section{New characterization of plurisubharmonic functions and positively curved vector bundles}\label{sec:char of p.s.h and
posit bdle}

\subsection{Characterization of plurisubharmonic functions}
It is known that a p.s.h function can be used as a weight in the Ohsawa-Takegoshi $L^2$ extension theorem.
In this section, we prove a converse in some sense of this result, namely,
if a function can be used as a weight in the Ohsawa-Takegoshi type $L^p$ extension for some $p>0$,
it is p.s.h. This result is inspired by Demailly's work of regularization of p.s.h functions \cite{Dem92}.

\begin{thm}\label{thm:cha. of p.s.h function}
Let $\varphi:D\ra [-\infty,+\infty)$ be a upper semicontinuous function on $D\subset \mc^n$ that is not identically $-\infty$.
Let $p>0$ is a fixed constant. If for any $z_0\in D$ with $\varphi(z_0)>-\infty$ and any $m>0$, there is $f\in\mathcal O(D)$ such that $f(z_0)=1$ and
$$\int_D|f|^{p}e^{-m\varphi}\leq C_me^{-m\varphi(z_0)},$$
where $C_m$ are constants independent of $z_0$ and satisfying $\log C_m/m\ra 0$,
then $\varphi$ is plurisubharmonic.
\end{thm}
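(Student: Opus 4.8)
The plan is to follow the strategy outlined in the introduction: show that $\frac{1}{m}\log K_m$ converges to $\varphi$ in an appropriate sense, where $K_m$ denotes a suitable weighted Bergman-type kernel associated to the weight $e^{-m\varphi}$ and the $L^p$-norm, and then exploit the fact that logarithms of such extremal functions are automatically plurisubharmonic. First I would define, for each $m>0$ and each $z\in D$, the extremal quantity
\begin{align*}
K_m(z):=\sup\left\{|f(z)|^p : f\in\mathcal O(D),\ \int_D|f|^pe^{-m\varphi}\leq 1\right\},
\end{align*}
so that $\log K_m$ is a supremum of the plurisubharmonic functions $\log|f(z)|^p$ over a family of $f$, hence is plurisubharmonic (after taking the upper semicontinuous regularization, which for such Bergman-type sups coincides with $\log K_m$ itself on the set where it is locally bounded above). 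The hypothesis then gives the lower bound: applying it at a point $z_0$ with $\varphi(z_0)>-\infty$ produces $f$ with $f(z_0)=1$ and $\int_D|f|^pe^{-m\varphi}\leq C_me^{-m\varphi(z_0)}$; normalizing $g=f/(C_me^{-m\varphi(z_0)})^{1/p}$ gives an admissible competitor, so $K_m(z_0)\geq C_m^{-1}e^{m\varphi(z_0)}$, i.e. $\frac{1}{m}\log K_m(z_0)\geq \varphi(z_0)-\frac{1}{m}\log C_m$.

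Next I would establish the matching upper bound $\frac{1}{m}\log K_m\leq \varphi$ pointwise, or at least an asymptotic version sufficient to conclude. This is where the submean-value / interior estimate enters: for an admissible $f$ (with $\int_D|f|^pe^{-m\varphi}\leq 1$) and any small polydisc or ball $B(z,r)\subset D$, one estimates $|f(z)|^p$ from above by a mean of $|f|^p$ over $B(z,r)$ using subharmonicity of $|f|^p$, then inserts $e^{-m\varphi}$ and uses upper semicontinuity of $\varphi$ to bound $e^{m\varphi}$ from below on the ball by roughly $e^{m\sup_{B(z,r)}\varphi}$, yielding $|f(z)|^p\lesssim \frac{1}{\mathrm{vol}(B)}e^{m\sup_{B(z,r)}\varphi}$. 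Taking $\sup$ over $f$, the logarithm, and dividing by $m$ gives $\frac{1}{m}\log K_m(z)\leq \sup_{B(z,r)}\varphi+O(\frac{1}{m})$. Combined with the growth condition $\frac{1}{m}\log C_m\to 0$ and the lower bound above, letting $m\to\infty$ and then $r\to0$ (invoking upper semicontinuity of $\varphi$) pins down $\limsup_m\frac{1}{m}\log K_m$ and $\liminf_m\frac{1}{m}\log K_m$ between $\varphi$ and $\sup_{B(z,r)}\varphi$.

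Finally, to conclude that $\varphi$ is plurisubharmonic I would verify the sub-mean-value inequality for $\varphi$ directly from the convergence of the plurisubharmonic functions $\psi_m:=\frac{1}{m}\log K_m$. The clean route is: for a complex line $z_0+\zeta v$ and small circle, write the sub-mean-value inequality for each plurisubharmonic $\psi_m$, namely $\psi_m(z_0)\leq \frac{1}{2\pi}\int_0^{2\pi}\psi_m(z_0+re^{i\theta}v)\,d\theta$; then pass to the limit, using the lower bound $\psi_m\geq \varphi-\frac{1}{m}\log C_m$ on the right and the upper bound $\psi_m\leq \sup_{\text{nbhd}}\varphi+o(1)$ on the left together with Fatou/dominated convergence. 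The main obstacle, and the point requiring the most care, is this interchange of limits: establishing the upper-bound inequality $\limsup_m\psi_m\leq\varphi$ uniformly enough (or in $L^1_{loc}$) to legitimately take the limit inside the circle average, while only assuming $\varphi$ upper semicontinuous rather than continuous. I expect to handle this by working with the upper regularization and invoking that a decreasing-type limit of plurisubharmonic functions whose regularization agrees with $\varphi$ is itself plurisubharmonic, so that $\varphi$ inherits the sub-mean-value property and is therefore plurisubharmonic on $D$.
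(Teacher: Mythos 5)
Your proposal is correct and follows essentially the same route as the paper: define the $L^p$ extremal function $K_m$ for the weight $e^{-m\varphi}$, observe $\frac1m\log K_m$ is plurisubharmonic, sandwich it between $\varphi-\frac1m\log C_m$ (from the hypothesis) and $\sup_{B(z,r)}\varphi+O(\frac1m)$ (from the submean value inequality), and pass plurisubharmonicity to the limit via the upper semicontinuous regularization of $\sup_{j\ge m}\frac1j\log K_j$, which decreases to $\varphi$. The fallback you describe for the limit interchange is exactly the paper's argument, so no gap remains.
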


We need some preparation for the proof of Theorem \ref{thm:cha. of p.s.h function}.
Let $D$, $\varphi$, and $p$ as in Theorem \ref{thm:cha. of p.s.h function}.
Let
$$H^p(D,\varphi)=\{f\in\mathcal O(D);|f|_p:=\int_D|f|^pe^{-\varphi}<\infty\}.$$
For $z\in D$, define
$$K_{\varphi,p}(z)=(\inf\{|f|_p; f\in H^p(D,\varphi), f(z)=1\})^{-1}$$
if there exist $f\in H^p(D,\varphi)$ with $f(z)\neq 0$,
and otherwise $K_{\varphi,p}(z)$ is defined to be 0.
It is also easy to see that $K_{\varphi,p}=\sup\{|f(z)|^p; f\in H^p(D,\varphi), |f|_p=1\}$.
We want to show that $K_{\varphi}$ is continuous and $\log K_{\varphi,p}$ is plurisubharmonic.

\begin{lem}\label{lem:Berg continu general p}
With the above notations, $K_{\varphi,p}$ is a continuous function on $D$.
\end{lem}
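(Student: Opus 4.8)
The plan is to prove continuity by establishing lower and upper semicontinuity of $K_{\varphi,p}$ separately. The single analytic input that drives everything is that, since $\varphi$ is upper semicontinuous and hence bounded above on every compact $K\subset\subset D$, the weight $e^{-\varphi}$ is bounded below by a positive constant on $K$; combined with the fact that $|f|^p$ is plurisubharmonic for holomorphic $f$ (because $p\log|f|$ is p.s.h. and $t\mapsto e^t$ is convex increasing), this converts weighted $L^p$-bounds into local uniform bounds. Concretely, for any ball $B(w,r)\subset\subset D$ the sub-mean value inequality for $|f|^p$ and the bound $e^{-\varphi}\geq e^{-M}$ on a compact neighborhood give
$$|f(w)|^p\leq \frac{C}{r^{2n}}\int_{B(w,r)}|f|^p\leq \frac{Ce^{M}}{r^{2n}}\int_{B(w,r)}|f|^pe^{-\varphi}\leq \frac{Ce^{M}}{r^{2n}}|f|_p .$$
In particular $K_{\varphi,p}$ is locally bounded, and any family of holomorphic functions with uniformly bounded $|\cdot|_p$ is locally uniformly bounded, hence normal by Montel's theorem.

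For lower semicontinuity, fix $z_0$ with $K_{\varphi,p}(z_0)>0$ and $\varepsilon>0$, and choose $f\in H^p(D,\varphi)$ with $f(z_0)=1$ and $|f|_p\leq K_{\varphi,p}(z_0)^{-1}+\varepsilon$. For $z$ close to $z_0$ we have $f(z)\neq 0$, so $f/f(z)$ is admissible at $z$ and yields $K_{\varphi,p}(z)\geq |f(z)|^p/|f|_p$. Letting $z\ra z_0$ and then $\varepsilon\ra 0$ gives $\liminf_{z\ra z_0}K_{\varphi,p}(z)\geq K_{\varphi,p}(z_0)$; at points where $K_{\varphi,p}(z_0)=0$ this is automatic.

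For upper semicontinuity, take any sequence $z_j\ra z_0$ and, passing to a subsequence, assume $K_{\varphi,p}(z_j)\ra L$; we may assume $L>0$, since otherwise $\limsup K_{\varphi,p}(z_j)=0\leq K_{\varphi,p}(z_0)$ is trivial. Pick near-extremal $g_j$ with $|g_j|_p=1$ and $|g_j(z_j)|^p\geq K_{\varphi,p}(z_j)-1/j$. By the normality above, a subsequence of $\{g_j\}$ converges locally uniformly to some $g\in\mathcal O(D)$, whence $|g(z_0)|^p=\lim_j|g_j(z_j)|^p=L$, while Fatou's lemma gives $|g|_p=\int_D|g|^pe^{-\varphi}\leq \liminf_j\int_D|g_j|^pe^{-\varphi}=1$. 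Since $L>0$ forces $g(z_0)\neq 0$ and $|g|_p>0$, the function $g/|g|_p^{1/p}$ is admissible at $z_0$, so $K_{\varphi,p}(z_0)\geq |g(z_0)|^p/|g|_p\geq L$. This proves $\limsup_{z\ra z_0}K_{\varphi,p}(z)\leq K_{\varphi,p}(z_0)$, and together with lower semicontinuity gives continuity.

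The main obstacle is the compactness step rather than the competitor arguments. The weight $e^{-\varphi}$ is only lower semicontinuous and may equal $+\infty$ on $\{\varphi=-\infty\}$, so two points require care: (a) justifying the local uniform bound that feeds Montel's theorem, which is precisely where upper semicontinuity of $\varphi$ is indispensable, and (b) applying Fatou's lemma cleanly with a possibly infinite weight. For the latter I would note that the existence of a nonzero element of $H^p(D,\varphi)$ forces $\{\varphi=-\infty\}$ to have Lebesgue measure zero (a nonzero holomorphic function vanishes only on a null set, so integrability against $e^{-\varphi}$ rules out $\{\varphi=-\infty\}$ having positive measure), so the integrands $|g_j|^pe^{-\varphi}$ converge pointwise almost everywhere to $|g|^pe^{-\varphi}$ and Fatou applies without modification.
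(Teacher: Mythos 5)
Your proof is correct and follows essentially the same route as the paper's: lower semicontinuity from the definition of $K_{\varphi,p}$ as a supremum over explicit competitors, and upper semicontinuity via a normal-family argument on near-extremal functions followed by Fatou's lemma. The only difference is that you spell out two points the paper leaves implicit --- the local uniform bound (via the sub-mean value inequality for $|f|^p$ and the upper bound on $\varphi$ over compacts) that justifies Montel's theorem, and the fact that $\{\varphi=-\infty\}$ is Lebesgue-null so that Fatou applies cleanly --- both of which you handle correctly.
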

\begin{proof}
This is proved by an elementary normal family argument.
By definition, it is clear that $K_{\varphi,p}$ is lower semicontinuous.
We now show it is is also upper semicontinuous.
Assume $a\in D$ and $z_j\in D$ which converge to $a$ as $j\ra\infty$.
Let $\epsilon>0$ be arbitrary. There exist $f_j\in H^p(D,\varphi)$ such that $|f_j|_p=1$
and $|f_j(z_j)|^p>K_{\varphi,p}(z_j)+\epsilon$.
Since $\varphi$ is upper semicontinuous, $\{f_j\}$ is a normal family on $D$
and hence have a subsequence, may assumed to be $\{f_j\}$ itself,
that converges uniformly to some $f\in \mathcal O(D)$ on compact sets of $D$.
It is clear that $f\in H^p(D,\varphi)$ and $|f|_p\leq 1$.
So $$K_{\varphi,p}(a)\geq |f(a)|^p=\lim_{j\ra\infty}|f_j(z_j)|^p\geq \limsup_{j\ra\infty}K_{\varphi,p}(z_j)-\epsilon.$$
Letting $\epsilon$ goes to 0, we see $K_{\varphi,p}$ is upper semicontinuous.
\end{proof}

\begin{lem}\label{lem:Berg p.s.h general p}
$\log K_{\varphi,p}$ is a plurisubharmonic function on $D$.
\end{lem}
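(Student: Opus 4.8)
The plan is to prove that $\log K_{\varphi,p}$ is plurisubharmonic by establishing the sub-mean value inequality directly, exploiting the characterization $K_{\varphi,p}(z) = \sup\{|f(z)|^p : f \in H^p(D,\varphi), |f|_p = 1\}$. Since Lemma \ref{lem:Berg continu general p} already gives continuity, it suffices to verify the sub-mean value property along each complex line. First I would fix $a \in D$ and a direction $v \in \mc^n$, and consider the restriction of $\log K_{\varphi,p}$ to a small disk $\zeta \mapsto a + \zeta v$. The aim is to show
\begin{align*}
\log K_{\varphi,p}(a) \leq \frac{1}{2\pi}\int_0^{2\pi} \log K_{\varphi,p}(a + r e^{i\theta} v)\, d\theta
\end{align*}
for all sufficiently small $r > 0$.

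The key idea is the standard Bergman-kernel trick of extremal functions. For each boundary point $a + re^{i\theta}v$ there is a near-extremal $f_\theta$, but to get a clean inequality I would instead run the argument in the reverse direction, the way one proves plurisubharmonicity of logarithms of Bergman kernels. Pick an extremal (or near-extremal) $f \in H^p(D,\varphi)$ for the center point $a$, so that $|f(a)|^p$ is close to $K_{\varphi,p}(a)$ with $|f|_p = 1$. Then for each point $w$ on the circle, the defining supremum gives $|f(w)|^p \leq K_{\varphi,p}(w)$, hence $\log|f(w)| \leq \frac{1}{p}\log K_{\varphi,p}(w)$. Now $\log|f|$ is plurisubharmonic (indeed subharmonic on the disk, since $f$ is holomorphic), so it satisfies the sub-mean value inequality at $a$; combining this with the pointwise bound on the circle yields
\begin{align*}
\log|f(a)| \leq \frac{1}{2\pi}\int_0^{2\pi} \log|f(a + re^{i\theta}v)|\, d\theta \leq \frac{1}{2\pi}\int_0^{2\pi} \frac{1}{p}\log K_{\varphi,p}(a + re^{i\theta}v)\, d\theta.
\end{align*}
Multiplying by $p$ and letting $f$ approach the extremal for $a$ gives the desired sub-mean value inequality for $\log K_{\varphi,p}$ at $a$.

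I expect the main technical obstacle to be the two extremal-value subtleties: first, ensuring an actual extremal function exists (or handling the supremum by a limiting sequence and passing the inequality through the limit using the continuity from Lemma \ref{lem:Berg continu general p} and the normal-family compactness already established there), and second, justifying that $\log K_{\varphi,p}(a)$ is not $-\infty$ in a way that makes the mean value integral degenerate — one must treat the locus where $K_{\varphi,p}$ vanishes. Since $\log K_{\varphi,p}$ may take the value $-\infty$ where no admissible $f$ is nonvanishing, I would note that the sub-mean value inequality is automatic there, and on the set where $K_{\varphi,p} > 0$ the argument above applies. The integrability of $\log K_{\varphi,p}$ on the circle follows from its continuity (it is bounded on compact circles), so the right-hand integral is well defined. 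Finally, because continuity plus the sub-mean value inequality over all complex lines through every point characterizes plurisubharmonicity, this completes the proof.
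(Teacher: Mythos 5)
Your proposal is correct and is essentially the paper's argument: the paper simply observes that $\log K_{\varphi,p}=\sup\{p\log|f|;\ f\in H^p(D,\varphi),\ |f|_p=1\}$ is an upper semicontinuous supremum of plurisubharmonic functions (using Lemma \ref{lem:Berg continu general p}) and is therefore plurisubharmonic, while you unwind the proof of that standard fact by verifying the sub-mean value inequality along complex lines via a near-extremal function at the center point. The handling of the degenerate locus $K_{\varphi,p}=0$ and the well-definedness of the circle integral are taken care of exactly as you indicate.
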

\begin{proof}
Note that $\log K_{\varphi,p}=\sup\{p\log|f|; f\in H^p(D,\varphi), |f|_p=1\}$
and $\log K_{\varphi,p}$ is upper semicontinuous by Lemma \ref{lem:Berg continu general p},
$\log K_{\varphi,p}$ is  plurisubharmonic.
\end{proof}

We now give the proof of Theorem \ref{thm:cha. of p.s.h function}:
\begin{proof}
We will use the above notations and definitions.
We denote $\frac{1}{m}\log K_{m\varphi,p}(z)$ by $\varphi_m(z)$.
By Lemma \ref{lem:Berg p.s.h general p}, $\varphi_m$ is p.s.h on $D$.
We want to show that $\varphi_m$ converges to $\varphi$ as $m\ra\infty$.

By assumption, we know
$$-\varphi_m(z)\leq \frac{1}{m}\log\left(e^{-m\varphi(z)}\right)+\frac{\log C_m}{m}=-\varphi(z)+\frac{\log C_m}{m}.$$
This is
$$\varphi_m(z)\geq \varphi(z)-\frac{\log C_m}{m}.$$

On the other hand, if $d(z,\partial D)>r$, the mean value inequality for p.s.h functions implies
$$\int_D|f(\zeta)|^{2p}e^{-m\varphi(\zeta)}\geq \frac{\pi r^n}{n!}e^{-m\sup_{\zeta\in B(z,r)}\varphi(\zeta)},$$
where $f$ is the minimal solution in the definition of $K_{m\varphi,p}(z)$ and $B(z,r)=\{\zeta\in D;|\zeta-z|\leq r\}$.
So we get
$$\varphi_m(z)\leq \sup_{\zeta\in B(z,r)}\varphi(\zeta)-\frac{1}{m}\log(\frac{\pi^n r^n}{n!}).$$

So get
$$\varphi(z)-\frac{\log C_m}{m}\leq\varphi_m(z)\leq \sup_{\zeta\in B(z,r)}\varphi(\zeta)-\frac{1}{m}\log(\frac{\pi^n r^n}{n!}).$$
We now take $r=e^{-\sqrt{m}/n}$, then the above inequality becomes
$$\varphi(z)-\frac{\log C_m}{m}\leq\varphi_m(z)\leq \sup_{\zeta\in B(z,e^{-\sqrt{m}/n})}\varphi(\zeta)-\frac{1}{m}\log\frac{\pi^n}{n!}+\frac{1}{\sqrt{m}}.$$
Note that $\varphi$ is u.s.c, the above inequality implies that $\limsup_{\zeta\ra z}\varphi(\zeta)=\varphi(z)$
and $\varphi_m$ converges to $\varphi$ pointwise as $m\ra\infty$.
Let $\psi_m=\sup_{j\geq m}\varphi_m$ and let $\psi_m^*$ be the upper semicontinuous regularization of $\psi_m$.
We have
$$\varphi(z)\leq \psi_m(z)\leq \sup_{\zeta\in B(z,e^{-\sqrt{m}/n})}\varphi(\zeta)+\frac{2}{\sqrt{m}}$$
for $m>>1$. Since the right hand side term of the above inequality is u.s.c, $\psi^*_m$ also satisfies the same inequality.
So we also have $\psi^*_m$ converges to $\varphi$ pointwise as $m\ra\infty$, thus $\varphi$ is plurisubharmonic.

\end{proof}


\subsection{Characterization of positive vector bundles}\label{subsec:char. of positive v.b.}
We now generalize Theorem \ref{thm:cha. of p.s.h function} in the previous section
to vector bundles. For simplicity, we only consider holomorphic vector bundles of finite rank here,
but the argument here can be applied to a very general framework,
as we will see in the following sections.

We start from some basic linear algebra.
Let $V$ be a vector space of finite dimension.
Recall that a Finsler metric on $V$ is defined to be a map $h:V\ra [0,+\infty]$ such that $h(cv)=|c|^2h(v)$
for all $v\in V$ and $c\in\mc$.
Given a Finsler metric on $V$, the dual metric $h^*$ on the dual space $V^*$ is defined as in Definition \ref{def:dual finsler metric}.
For a positive integer $m$, the $m$-th tensor power of $V$ is denoted by $V^{\otimes m}$.
Then $h$ and $h^*$ induces naturally Finsler metrics $h^m$ and $h^{*m}$ on $V^{\otimes m}$ and $(V^*)^{\otimes m}$ as follows.
Recall that a vector $\xi\in (V^*)^{\otimes m}$ can be viewed as a map $\xi:V^m\ra\mc$
which is multilinear, namely linear on each components.

\begin{defn}\label{def:metric on tensor power}
The metric $h^{*m}:(V^*)^{\otimes m}\ra [0,+\infty]$ on $(V^*)^{\otimes m}$ is defined as:
$$h^{*m}(\xi):=\sup\{|\xi(u_1,\cdots,u_m)|; u_i\in V, h(u_i)\leq 1, 1\leq i\leq m\}$$
if $h(u)<+\infty$ for some $u\in V$, otherwise $h^{*m}(\xi)$ is defined to be 0.
The metric $h^m$ on $V^{\otimes m}$ is defined in the same way by identifying $V$ and $(V^*)^*$,
the dual space of $V^*$.
\end{defn}

According to this definition, for $\xi_1,\cdots, \xi_m\in V^*$,
we have the product formula $h^{*m}(\xi_1\cdots \xi_m)=h^*(\xi_1)\cdots h^*(\xi_m)$.
Definition \ref{def:metric on tensor power} can be applied to holomorphic vector bundles.
If $E$ is a holomorphic vector bundle over a complex manifold $X$ and $h$ is a Finsler metric on $E$.
The induced metrics $h^m$ and $h^{*m}$ on $E^{\otimes m}$ and $(E^*)^{\otimes m}$ is defined pointwise.

We now introduce a notion that may have independent interest.

\begin{defn}[Multiple $L^p$-extension property]\label{def:multiple extension prop}
Let $(E,h)$ be a holomorphic vector bundle over a bounded domain
$D\subset\mc^n$ equipped with a singular Finsler metric $h$. Let
$p>0$ be a fixed constant. Assume that for any $z\in D$, any nonzero element $a\in E_{z}$
with finite norm $|a|$, and any $m\geq 1$, there is a holomorphic
section $f_m$ of $E^{\otimes m}$ on $D$ such that $f_m(z)=a^{\otimes
m}$ and satisfies the following estimate:
$$\int_D|f_m|^{p}\leq C_m|a^{\otimes{m}}|^{p}=C_m |a|^{mp},$$
where $C_m$ are constants independent of $z$ and satisfying the
growth condition $\frac{1}{m}\log C_m\ra 0$ as $m\ra\infty$. Then
$(E, h)$ is said to have multiple $L^p$-extension property.
\end{defn}

The following theorem says that multiple $L^p$-extension property for some $p>0$ implies Griffiths positivity.

\begin{thm}\label{thm:cha of positive bundle}
Let $(E,h)$ be a holomorphic vector bundle over a bounded domain
$D\subset\mc^n$ equipped with a singular Finsler metric $h$, such
that the norm of any local holomorphic section of $E^*$ is upper
semicontinuous. If $(E,h)$ has multiple $L^p$-extension property for
some $p>0$, then $(E,h)$ is positively curved in the sense of
Griffiths, namely $\log|u|$ is plurisubharmonic for any local
holomorphic section $u$ of $E^*$.
\end{thm}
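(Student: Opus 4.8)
**The plan is to reduce Theorem \ref{thm:cha of positive bundle} to the scalar case Theorem \ref{thm:cha. of p.s.h function}** by testing against an arbitrary local holomorphic section $u$ of $E^*$. Fix such a section $u$ on an open subset $U\subseteq D$; we must show $\log|u|_{h^*}$ is plurisubharmonic on $U$. The key observation is that the $m$-th tensor power $u^{\otimes m}$ is a holomorphic section of $(E^*)^{\otimes m}$, and by the product formula recorded after Definition \ref{def:metric on tensor power} its dual-norm satisfies $h^{*m}(u^{\otimes m})=\left(h^*(u)\right)^m$, so that
\begin{align*}
\log h^{*m}(u^{\otimes m})=m\log h^*(u).
\end{align*}
Thus controlling $\log|u|_{h^*}$ is equivalent to controlling the norms of the tensor powers, and the multiple $L^p$-extension hypothesis is precisely what produces good test functions at that level.

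First I would set up the pairing. For a fixed point $z_0\in U$ with $|u(z_0)|_{h^*}>0$, I would choose (by the definition of the dual metric) a vector $a\in E_{z_0}$ with finite norm realizing the supremum, so that $|\langle u(z_0),a\rangle|$ is close to $|u(z_0)|_{h^*}\,|a|$. Normalizing, one arranges $|a|=1$ and $\langle u(z_0),a\rangle$ near $|u(z_0)|_{h^*}$. The multiple $L^p$-extension property then supplies, for each $m\geq 1$, a holomorphic section $f_m$ of $E^{\otimes m}$ with $f_m(z_0)=a^{\otimes m}$ and $\int_D|f_m|^p\leq C_m|a|^{mp}=C_m$. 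Next I would pair $f_m$ against $u^{\otimes m}$ to manufacture a \emph{scalar} holomorphic function. Define
\begin{align*}
g_m:=\langle u^{\otimes m},f_m\rangle\in\mathcal O(U).
\end{align*}
At $z_0$ this evaluates to $\langle u(z_0)^{\otimes m},a^{\otimes m}\rangle=\langle u(z_0),a\rangle^{m}$, which is essentially $|u(z_0)|_{h^*}^{m}$, while pointwise the defining supremum in Definition \ref{def:metric on tensor power} gives $|g_m(z)|\leq h^{*m}(u(z)^{\otimes m})\,|f_m(z)|=|u(z)|_{h^*}^{m}\,|f_m(z)|$.

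The aim is now to invoke the scalar theorem with the candidate weight $\psi:=p\log|u|_{h^*}$, which is upper semicontinuous by hypothesis on $E^*$. Writing $\varphi=\psi/p$ so that $e^{-m\varphi}=|u|_{h^*}^{-mp}$, the pointwise bound above yields
\begin{align*}
\int_U|g_m|^p\,|u|_{h^*}^{-mp}\leq\int_U|f_m|^p\leq C_m,
\end{align*}
and after dividing by the value at $z_0$ one obtains exactly the $L^p$-extension estimate of Theorem \ref{thm:cha. of p.s.h function} for the function $\varphi=\log|u|_{h^*}$, with the same constants $C_m$ satisfying $\frac1m\log C_m\to0$. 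Applying that theorem gives that $\log|u|_{h^*}$ is plurisubharmonic, which is the Griffiths positivity of $(E,h)$. Since $z_0$ and $u$ were arbitrary, the proof is complete after letting the approximation of the supremum sharpen.

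\textbf{The main obstacle} I expect is bookkeeping at points where the dual norm degenerates or the chosen vector $a$ fails to have finite norm: the supremum defining $h^*$ may not be attained by a single finite-norm vector, so one must work with near-extremizers and pass to the limit, tracking that the resulting function $g_m$ still has value close to $|u(z_0)|_{h^*}^m$ at $z_0$ rather than exactly equal to it. A second delicate point is verifying that $\varphi=\log|u|_{h^*}$ genuinely meets all hypotheses of Theorem \ref{thm:cha. of p.s.h function} — in particular that it is not identically $-\infty$ (which holds wherever $u\neq0$) and is upper semicontinuous (guaranteed by assumption). The tensor-power product formula does the essential work of converting a rank-$r$ positivity question into the rank-one scalar statement, so once the extremizer issue is handled carefully the remainder is a direct citation of the earlier theorem.
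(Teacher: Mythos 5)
Your proposal is correct and follows essentially the same strategy as the paper: test against $u^{\otimes m}$, use the multiple $L^p$-extension property to produce $f_m$ with $f_m(z_0)=a^{\otimes m}$, form the scalar holomorphic function $g_m=\langle u^{\otimes m},f_m\rangle$, use the pointwise bound $|g_m|\leq |u|^m\,|f_m|$, and feed the resulting integral estimate into Theorem \ref{thm:cha. of p.s.h function}. The one place you diverge is the final step: the paper does not normalize $g_m$ by its value at $z_0$ directly, but instead applies the Ohsawa--Takegoshi theorem once more on $D$ with the weight $p\log|\langle u^{\otimes m},f_m\rangle|$ to produce an auxiliary function $h$ with $h(z_0)=1$, and then combines the two integrals by Cauchy--Schwarz, ending with an $L^1$ estimate for the weight $\tfrac{p}{2}\log|u|$; your shortcut of taking $g_m/g_m(z_0)$ as the test function yields the $L^p$ estimate for the weight $p\log|u|$ with constants $C_m(1-\epsilon_m)^{-mp}$, which still satisfy the growth condition, so the extra extension step is not needed. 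Your explicit handling of near-extremizers of the dual norm is in fact more careful than the paper, which simply assumes the supremum is attained.
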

\begin{proof}
Let $u$ be a local holomorphic section of $E^*$ on $U\subset D$.
We need to show that the function $\varphi:=\log|u|$ is plurisubharmonic on $U$.
Our strategy is to prove that $\varphi$ satisfies the condition in Theorem \ref{thm:cha. of p.s.h function}.

Without loss of generality, we assume $U=D$.
Let $z$ be a fixed point in $D$.
We assume that $|u(z)|\neq 0$.
Let $a\in E_z$ such that $|a|=1$ and $<u(z),a>=|u(z)|$.
By assumption, there is a holomorphic section $f$ of $E^{\otimes m}$ over $D$ such that
$f(z)=a^{\otimes m}$ and $\int_D|f|^{p}\leq C_m|a^{\otimes{m}}|^{p}=C_m|a|^{mp}=C_m$.
We view $u^{\otimes m}$ as a holomorphic section of $(E^*)^{\otimes m}$.
It is obvious that $|u^{\otimes m}|=|u|^m$ and $|u^{\otimes m}(z)|=<u^{\otimes m}(z),a^{\otimes m}>$.
By definition,
$$|u(\zeta)|^m\geq |<u^{\otimes m}(\zeta), f(\zeta)>|/|f(\zeta)|$$
for $\zeta\in D,$
which is
\be\label{eqn:weight estimate}
e^{-m\varphi(\zeta)}\leq e^{-\log|<u^{\otimes m}(\zeta),f(\zeta)>|}|f(\zeta)|.
\ee
Since $u^{\otimes m}, f$ are holomorphic section of $(E^*)^{\otimes m}$ and $E^{\otimes m}$ respectively,
$<u^{\otimes m}, f>$ is a holomorphic function on $D$.
By the Ohsawa-Takegoshi extension theorem, there is a holomorphic function $h$ on $D$ such that
$h(z)=1$ and
$$\int_D|h|^2e^{-p\log|<u^{\otimes m}(\zeta),f(\zeta)>|}\leq Ce^{-p\log|<u^{\otimes m}(z),f(z)>|}=Ce^{-pm\varphi(z)},$$
where $C$ is a constant independent $m$ and $z$.
By the above inequality, we have
\begin{equation*}
\begin{split}
\int_D|h|e^{-\frac{p}{2}m\varphi}
& \leq \int_D|h|e^{-\frac{p}{2}\log|<u^{\otimes m}(\zeta),f(\zeta)>|}|f|^{\frac{p}{2}}\\
& \leq \left(\int_D|h|^2e^{-p\log|<u^{\otimes m}(\zeta),f(\zeta)>|}\int_D|f|^{p}\right)^{1/2}\\
& \leq \left(Ce^{-pm\varphi(z)}C_m \right)^{1/2}\\
& = \sqrt{CC_m}e^{-\frac{p}{2}m\varphi(z)}.
\end{split}
\end{equation*}

By Theorem \ref{thm:cha. of p.s.h function}, $\varphi=\log|u|$ is p.s.h on $D$.
\end{proof}

\section{Proof of Berndtsson's minimum principle}\label{sec:minimum principle}
In this section we give a proof of Berndtsson's integral form of minimum principle.
The proof is motivated by Berndtsson's original proof, but our starting point is Theorem \ref{thm:cha. of p.s.h function}
and the main ingredient is Ohsawa-Takegoshi extension theorem,
while the main ingredient in Berndtsson's original proof is H\"ormander's $L^2$ estimate of $\bar\partial$.
Of course Theorem \ref{thm:Berndtsson MP} is a corollary of the plurisubharmonic variation of relative Bergman kernels
that will be discussed in \S \ref{sec:p.s.h var of m-bergman},
but we also present it here to show in this simple case how the main ideas work.

\begin{thm}[\cite{Bob98}]\label{thm:Berndtsson MP}
    Let $\Omega\subset\mc^r_z\times\mc^n_w$ be a pseudoconvex domain
    and let $p:\Omega\ra U:=p(\Omega)\subset\mc^r$ be the natural projection.
    Let $\varphi(z,w)$ be a plurisubharmonic function on $\Omega$.
    If all fibers $\Omega_z:=p^{-1}(z)$ $(z\in p(\Omega))$ are Reinhardt domains in $\mc^n$
    and $\varphi(z,e^{i\theta}w)=\varphi(z,w)$ for all $\theta\in\mr^n$.
    Then the function $\tilde\varphi$ defined by
    $$e^{-\tilde\varphi(z)}=\int_{\Omega_z}e^{-\varphi(z,w)}d\lambda(w)$$
    is a plurisubharmonic function on $p(\Omega)$,where $d\lambda(w)$ is the Lebesgue measure on $\mc^n$.
\end{thm}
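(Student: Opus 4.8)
The plan is to deduce Berndtsson's minimum principle from the characterization of plurisubharmonic functions in Theorem \ref{thm:cha. of p.s.h function}. The key idea is that the Reinhardt symmetry turns the integral $e^{-\tilde\varphi(z)}$ into a weighted Bergman-type quantity that can be realized through an $L^p$-extension. First I would reduce the integral $\int_{\Omega_z}e^{-\varphi(z,w)}d\lambda(w)$ to a manageable form using the $S^1$-invariance $\varphi(z,e^{i\theta}w)=\varphi(z,w)$. Under this symmetry the monomials $w^\alpha$ are orthogonal in the weighted space $L^2(\Omega_z,e^{-\varphi_z})$, so integrating $e^{-\varphi}$ over the fiber is controlled by the weighted $L^2$-norms of constants. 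More concretely, I expect to show that $\tilde\varphi(z)$ agrees (up to an additive constant absorbed into the growth condition) with $-\log$ of the squared norm of the constant function $1$ in the fiber, i.e. with a value of the relative Bergman kernel at a suitable point fixed by the torus action.

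The plan is then to verify the hypothesis of Theorem \ref{thm:cha. of p.s.h function} directly for $\tilde\varphi$ on $U=p(\Omega)$. For a fixed $z_0$ and any $m>0$, I would produce a holomorphic function $f$ on $U$ with $f(z_0)=1$ satisfying the bound $\int_U|f|^{p}e^{-m\tilde\varphi}\leq C_m e^{-m\tilde\varphi(z_0)}$ with $\tfrac{1}{m}\log C_m\to 0$. The natural mechanism is the Ohsawa-Takegoshi extension theorem (Theorem \ref{thm:Demailly OT}, in the pseudoconvex domain case) applied on $\Omega$ itself: one uses the near-extremal function for the weighted integral on the fiber $\Omega_{z_0}$ and extends it to a holomorphic function $F$ on $\Omega$ with a controlled $L^2$-estimate. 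The $S^1$-symmetry is what lets the fiberwise integrals of $e^{-\varphi}$ be matched against the weighted norms appearing in the extension estimate, and it forces the extremal candidate to be (a constant multiple of) the symmetric function whose norm reproduces $e^{-\tilde\varphi}$.

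The main obstacle I anticipate is bookkeeping the passage between the \emph{fiber integral} defining $\tilde\varphi$ and the \emph{weighted norm} that the extension theorem controls, while keeping the constant $C_m$ independent of $z_0$ with subexponential growth in $m$. Concretely, after replacing $\varphi$ by $m\varphi$ one must check that extending the fiber-extremal section and integrating over $\Omega$ (rather than over a single fiber) produces exactly $e^{-m\tilde\varphi}$ as the fiberwise weight, up to a factor that is polynomial (or at worst subexponential) in $m$; this is where the Reinhardt structure and the orthogonality of monomials do the real work, since they guarantee that the integral over $\Omega_z$ of $e^{-m\varphi}$ is comparable, uniformly in $z$, to the quantity controlled by the Ohsawa-Takegoshi estimate. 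Once this comparison is in place with a constant satisfying $\tfrac{1}{m}\log C_m\to 0$, Theorem \ref{thm:cha. of p.s.h function} applies verbatim and yields plurisubharmonicity of $\tilde\varphi$.

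A secondary technical point, which I would address before invoking Theorem \ref{thm:cha. of p.s.h function}, is the upper semicontinuity of $\tilde\varphi$ and its not being identically $-\infty$; both follow from standard properties of fiber integrals of plurisubharmonic weights together with Fatou-type arguments analogous to those used for the Bergman-kernel metrics in \S\ref{sec:regularity}, so I do not expect genuine difficulty there.
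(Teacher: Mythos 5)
Your overall strategy --- verifying the hypothesis of Theorem \ref{thm:cha. of p.s.h function} for $\tilde\varphi$ via an Ohsawa--Takegoshi extension combined with the $S^1$-symmetry, which forces the minimal extension to be independent of $w$ --- is the right one, and your identification $e^{-\tilde\varphi(z)}=\|1\|^2_{L^2(\Omega_z,e^{-\varphi_z})}$ is correct. But the step you flag as ``the main obstacle'' is not bookkeeping: it is a genuine gap, and the comparison you hope for is false. Applying the extension theorem on $\Omega$ itself with the weight $e^{-m\varphi}$ controls $\int_U|g|^2\int_{\Omega_z}e^{-m\varphi_z}$ in terms of $\int_{\Omega_{z_0}}e^{-m\varphi_{z_0}}$, whereas Theorem \ref{thm:cha. of p.s.h function} requires control of $\int_U|g|^2 e^{-m\tilde\varphi}$ with $e^{-m\tilde\varphi(z)}=\bigl(\int_{\Omega_z}e^{-\varphi_z}\bigr)^m$. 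The quantities $\int_{\Omega_z}e^{-m\varphi_z}$ and $\bigl(\int_{\Omega_z}e^{-\varphi_z}\bigr)^m$ are not comparable up to subexponential factors in $m$, Reinhardt symmetry notwithstanding: for $\Omega_z$ the unit disc and $\varphi(z,w)=\log|w|$ one has $\int_{\Omega_z}e^{-\varphi_z}=2\pi$ but $\int_{\Omega_z}e^{-m\varphi_z}=+\infty$ for $m\geq 2$; and even for bounded $\varphi$, Laplace-type asymptotics make the ratio grow exponentially in $m$ unless $\varphi$ is constant on each fiber. So no choice of $C_m$ with $\frac{1}{m}\log C_m\to 0$ can bridge the two, and orthogonality of monomials does not enter.

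The missing idea is the fiber-product power. By Fubini, $\bigl(\int_{\Omega_z}e^{-\varphi(z,w)}d\lambda(w)\bigr)^m=\int_{\Omega_z^m}e^{-\sum_{i=1}^m\varphi(z,w_i)}$, so $e^{-m\tilde\varphi}$ is itself a single fiber integral on $\Omega_m=\{(u_1,\dots,u_m)\in\Omega^m:\ p(u_1)=\cdots=p(u_m)\}$ with weight $\sum_i\varphi(z,w_i)$, and no comparison between different powers of the weight is ever needed. One then extends the constant function $1$ from $D^m_{z_0}=\{z_0\}\times\Omega_{z_0}^m$ to $\Omega_m$ by Theorem \ref{thm:Demailly OT}; the constant is independent of $m$ because the fiber still has codimension $r$ in $\Omega_m$. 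The minimal extension is invariant under the torus action on each $w_i$ and is therefore a function $g(\zeta)$ of the base variable alone, which yields exactly $\int_U|g|^2e^{-m\tilde\varphi}\leq Ce^{-m\tilde\varphi(z_0)}$, and Theorem \ref{thm:cha. of p.s.h function} applies with $C_m=C$. This is precisely the paper's argument, and it is the reason the authors introduce fiber-product powers of the family rather than only raising the weight to the $m$-th power.
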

\begin{proof}
It is clear that $\tilde\varphi$ is upper semicontinuous,
so it suffices to prove that $\varphi$ satisfies the condition in Theorem \ref{thm:cha. of p.s.h function}.
For $m\geq 1$, let
$$\Omega_m=\{(u_1,\cdots, u_m)\in\Omega^m; p(u_1)=\cdots=p(u_m)\}$$
be the fiber product of $\Omega$ and let $D^m_z=\{z\}\times \Omega_z^m\subset \Omega_m$ for $z\in U$.
We can naturally identify $\Omega_m$ with the disjoint union $\coprod_{z\in U}D^m_z$.
For any $z\in U$ with $\int_{\Omega_z}e^{-\varphi(z,w)}d\lambda(w)<\infty$,
by Theorem \ref{thm:Demailly OT}, there is $f\in \mathcal O(\Omega_m)$ such that  $f|_{D^m_z}\equiv 1$ and
$$\int_{\Omega_m}|f(\zeta,w_1,\cdots, w_m)|^2e^{-\sum_{i=1}^m\varphi(z,w_i)}\leq C\int_{D^m_z}e^{-\sum_{i=1}^m\varphi(z,w_i)}=e^{-m\tilde\varphi(z)},$$
where $C$ is a constant independent of $z$ and $m$.
We choose $f$ such that the left hand side in the above inequality is minimal.
By the uniqueness of the minimal element, $f(\zeta, e^{i\theta_1}w_1, \cdots, e^{i\theta_m}w_m)=f(\zeta,w_1,\cdots, w_m)$ for all $\theta_i\in\mr^n$.
So $f$ is independent of $w_1, \cdots, w_m$.
We denote $f(\zeta, w_1,\cdots, w_m)$ by $g(\zeta)$.
From the above inequality, we have $g(z)=1$ and
$$\int_U|g(\zeta)|^2e^{-m\tilde\varphi(\zeta)}\leq Ce^{-m\tilde\varphi(z)}.$$
By Theorem \ref{thm:cha. of p.s.h function}, $\tilde\varphi$ is subharmonic on $U$.
\end{proof}

\begin{rem}
Theorem \ref{thm:Berndtsson MP} can be generalzied to holomorphically convex K\"ahler manifolds
with more general group actions, which is the framework for minimum principle considered in \cite{DZZ14, DZZ17}.
\end{rem}

\section{Plurisubharmonic variation of relative $m$-Bergman kernel metrics}\label{sec:p.s.h var of m-bergman}
The aim of this section is to prove that the relative $m$-Bergman kernel metric associated
to a family of pseudoconvex domains or compact K\"ahler manifolds have semipositive
curvature current.

\subsection{For families of pseudoconvex domains}\label{subsec:Bergman kernel p.s.h domain case}
Let $\Omega\subset\mc^r_t\times\mc^n_z$ be a pseudoconvex domain
and let $p:\Omega\ra U:=p(\Omega)\subset\mc^r$ be the natural projection.
Let $\varphi(t,z)$ be a plurisubharmonic function on $\Omega$.
Let $\Omega_t:=p^{-1}(t)$ $(t\in U)$ be the fibers.
We also denote by $\varphi_t(z)=\varphi(t,z)$ the restriction of $\varphi$ on $\Omega_t$.
Let $m$ be a positive integer, and let $K_{m,t}(z)$ be the $m$-Bergman kernel on $\Omega_t$ with respect to the weight $e^{-\varphi_t}$.
As $t$ varies, we will view $K_{m,t}(z)$ as a function on $\Omega$, which will be called the relative $m$-Bergman kernel on $\Omega$.
When $m=1$, $K_{1,t}(z)$ is the ordinary relative Bergman kernel.
Berndtsson proved that $K_{1,t}(z)$ is log-plurisubharmonic on $U$,
based on regularity of $\bar\partial$-Neumann and H\"{o}rmander's $L^2$-estimate to $\bar\partial$.
In this section, we prove that the relative $m$-Bergman kernel is log-plurisubharmonic for general $m$.
Our main techniques are Theorem \ref{thm:cha. of p.s.h function} and raising powers of the domains.

\begin{thm}\label{thm:psh variation of m Bergman kernel}
With the above assumptions and notations, for any $m\geq 1$
the function $\psi_m(t,z):=\log K_{m,t}(z)$ is a plurisubharmonic function on $\Omega$.
\end{thm}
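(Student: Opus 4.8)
The plan is to verify that $\psi_m$ satisfies the hypothesis of Theorem \ref{thm:cha. of p.s.h function} with the fixed exponent $p=2/m$, and then to quote that theorem to conclude plurisubharmonicity. Two preliminaries settle the regularity requirement: by Corollary \ref{cor:m-Bergman kernel continu} the relative $m$-Bergman kernel $K_{m,t}(z)$ is upper semicontinuous on $\Omega$, so $\psi_m=\log K_{m,t}(z)$ is an upper semicontinuous, not identically $-\infty$, function into $[-\infty,+\infty)$. Since the boundedness of the base is needed for Theorem \ref{thm:Lm-extension on psc} and a uniform bound on the fiber volumes will enter the final estimate, I would assume $\Omega$ bounded and set $V:=\sup_{t}\mathrm{vol}(\Omega_t)<+\infty$.

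Fix $(t_0,z_0)\in\Omega$ with $K_{m,t_0}(z_0)>0$ and a positive integer $\ell$; the weight exponent supplied to Theorem \ref{thm:cha. of p.s.h function} will be $\ell/m$. The decisive device is raising powers via the $\ell$-fold fiber product. I would form
$$W=\{(t,z_1,\dots,z_\ell):(t,z_i)\in\Omega,\ 1\le i\le\ell\},$$
which is again pseudoconvex (a plurisubharmonic exhaustion of $\Omega$ yields one of $W$ by summing over the factors), fibered over $U$ with fiber $W_t=\Omega_t^\ell$, and equip it with the plurisubharmonic weight $\Phi(t,z_1,\dots,z_\ell)=\sum_{i=1}^\ell\varphi(t,z_i)$. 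By the product property (Proposition \ref{prop:product property}) the relative $m$-Bergman kernel of $(W,\Phi)$ is $\prod_{i=1}^\ell K_{m,t}(z_i)$; in particular it equals $K_{m,t}(z)^{\ell}$ on the diagonal $z_1=\dots=z_\ell=z$. This is the whole point of raising powers: the $\ell$-th power $e^{-\ell\psi_m}$ is itself a relative $m$-Bergman kernel, so one application of the extension theorem will produce a constant independent of $\ell$.

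Next I would pick, for $\epsilon_\ell>0$ small, a near-extremal $\mathcal G$ on $W_{t_0}$ for this kernel at $\delta_0:=(z_0,\dots,z_0)$, i.e. $\mathcal G(\delta_0)=1$ and $\|\mathcal G\|_{m,W_{t_0}}^2\le(1+\epsilon_\ell)K_{m,t_0}(z_0)^{-\ell}$ (for instance $\mathcal G(z_1,\dots,z_\ell)=\prod_i g_0(z_i)$ with $g_0$ near-extremal for $K_{m,t_0}(z_0)$). Applying Theorem \ref{thm:Lm-extension on psc} on $W$ with weight $\Phi/m$ extends $\mathcal G$ to $\tilde{\mathcal G}\in\mathcal O(W)$ with
$$\int_W|\tilde{\mathcal G}|^{2/m}e^{-\Phi/m}\le C_{r,M}\int_{W_{t_0}}|\mathcal G|^{2/m}e^{-\Phi_{t_0}/m}=C_{r,M}\|\mathcal G\|_{m,W_{t_0}}^{2/m}\le C_{r,M}(1+\epsilon_\ell)^{1/m}K_{m,t_0}(z_0)^{-\ell/m}.$$
I then restrict to the diagonal: $G(t,z):=\tilde{\mathcal G}(t,z,\dots,z)$ is holomorphic on $\Omega$ with $G(t_0,z_0)=1$, and the extremal (sub-mean-value) characterization of the Bergman kernel on each fiber $W_t$, applied to $\tilde{\mathcal G}_t:=\tilde{\mathcal G}|_{W_t}$ at the diagonal point, gives $|G(t,z)|^2\le K_{m,t}(z)^{\ell}\,\|\tilde{\mathcal G}_t\|_{m,W_t}^2$, hence $|G(t,z)|^{2/m}K_{m,t}(z)^{-\ell/m}\le\|\tilde{\mathcal G}_t\|_{m,W_t}^{2/m}$. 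Integrating over $\Omega$, using Fubini and $V=\sup_t\mathrm{vol}(\Omega_t)$,
$$\int_\Omega|G|^{2/m}e^{-(\ell/m)\psi_m}\le V\int_W|\tilde{\mathcal G}|^{2/m}e^{-\Phi/m}\le VC_{r,M}(1+\epsilon_\ell)^{1/m}e^{-(\ell/m)\psi_m(t_0,z_0)}.$$
Thus $\psi_m$ satisfies the hypothesis of Theorem \ref{thm:cha. of p.s.h function} with exponent $p=2/m$, weight exponents $\ell/m\ (\ell=1,2,\dots)$ tending to $+\infty$, and constants $C_{\ell/m}=VC_{r,M}(1+\epsilon_\ell)^{1/m}$; taking $\epsilon_\ell\to0$ keeps these bounded, so $\tfrac{m}{\ell}\log C_{\ell/m}\to0$, and since the regularization argument proving Theorem \ref{thm:cha. of p.s.h function} only uses a sequence of exponents tending to infinity, $\psi_m$ is plurisubharmonic on $\Omega$.

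The main obstacle will be the correct bookkeeping of exponents. One must ensure that the extension constant does not accumulate with $\ell$, and this is exactly what the product property buys: it exhibits $K_{m,t}(z)^\ell$ as a single relative $m$-Bergman kernel, so that $C_{r,M}$ enters only once. Matching the section power $2/m$ against the weight power $\ell/m$ (rather than $\ell$) is what preserves the growth condition $\tfrac1{\ell'}\log C_{\ell'}\to0$ uniformly in $m$; and the return from $W$ to $\Omega$ must pass through the diagonal restriction together with the extremal characterization of the Bergman kernel, which is precisely the step where the uniform fiber-volume bound $V$ is spent and where boundedness of $\Omega$ is used.
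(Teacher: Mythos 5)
Your proof is correct and rests on exactly the same pillars as the paper's: the $\ell$-fold fiber product together with Proposition \ref{prop:product property} to exhibit $K_{m,t}(z)^{\ell}$ as a single relative $m$-Bergman kernel, one application of Theorem \ref{thm:Lm-extension on psc} so that the constant $C_{r,M}$ enters only once, the extremal characterization of the kernel to come back down, and Theorem \ref{thm:cha. of p.s.h function} with $p=2/m$ and weight multiples $\ell/m\to\infty$. The one structural difference is where you apply the plurisubharmonicity criterion: the paper restricts $\psi_m$ to the graph of a holomorphic section $t\mapsto(t,s(t))$, evaluates the extension $F$ only at the single diagonal point $(s(t),\dots,s(t))$ of each fiber, and verifies the hypothesis of Theorem \ref{thm:cha. of p.s.h function} for the function $g(t)=F(t,s(t),\dots,s(t))$ on the base $U$; you instead verify it directly on all of $\Omega$ by integrating the diagonal restriction $G(t,z)=\tilde{\mathcal G}(t,z,\dots,z)$ over each whole fiber. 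Your variant buys a cleaner statement (no reduction to graphs, hence no need to argue separately that graph restrictions plus fiberwise plurisubharmonicity suffice), but it costs the uniform fiber-volume bound $V=\sup_t\mathrm{vol}(\Omega_t)$, i.e.\ boundedness of all of $\Omega$ rather than just of the base as the paper assumes; to recover the general case you would need to add an exhaustion step, writing $\Omega$ as an increasing union of bounded pseudoconvex subdomains $\Omega\cap B(0,j)$ and using that the corresponding $m$-Bergman kernels decrease to $K_{m,t}$ (a normal-families argument), so that $\log K_{m,t}$ is a decreasing limit of plurisubharmonic functions. With that reduction spelled out, the argument is complete.
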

\begin{proof}
We may assume that $U$ is bounded.
By Corollary \ref{cor:m-Bergman kernel continu},
$\psi_m(t,z)$ is upper semicontinuous.
It suffices  to prove that $\psi_m(t, s(t))$ is subharmonic for all holomorphic sections $s:U\rightarrow \Omega$.
We want to prove that $\psi_m(t, s(t))$ satisfies the condition in Theorem \ref{thm:cha. of p.s.h function} on $U$.

For $k\geq 1$, let
$$\Omega_k=\{(u_1,\cdots, u_k)\in\Omega^k; p(u_1)=\cdots=p(u_k)\}$$
be the fiber product of $\Omega$ and let $D^k_t=\{t\}\times \Omega_t^k\subset \Omega_k$ for $t\in U$.
We can naturally identify $\Omega_k$ with the disjoint union $\coprod_{t\in U}D^k_t$.
Let $\varphi_k(t,z_1,\cdots, z_k):=\varphi(t,z_1)+\cdots+\varphi(t,z_k)$,
then it is a plurisubharmonic function on $\Omega_k$.

Let $t_0\in U$ and $s:U\rightarrow \Omega$ be a holomorphic section
such that $\psi_m(t_0,s(t_0))\neq -\infty$.
By definition of the $m$-Bergman kernel and Proposition \ref{prop:product property},
there is $f\in \mathcal{O}(D_{t_0}^m)$ with $f(t_0,s(t_0),\cdots, s(t_0))=1$ and
\begin{align*}
\int_{D^k_{t_0}}|f|^{2/m}e^{-\varphi_k(t_0,z_1,\cdots,z_k)/m}=e^{-k\psi_m(t_0,s(t_0))/m}.
\end{align*}
By Theorem \ref{thm:Lm-extension on psc},  there is an $F\in \mathcal{O}(\Omega_k)$ such that $F|_{D_{t_0}^k}=f$ and
\begin{align*}
\int_{\Omega_k}|F|^{2/m}e^{-\varphi_k(t,z_1,\cdots,z_k)/m}\leq C\int_{D^k_{t_0}}|f|^{2/m}e^{-\varphi_k(t_0,z_1,\cdots,z_k)/m},
\end{align*}
where $C$ is a constant independent of $t_0, s,$ $k$ and $m$.   The left hand side of the above inequality is
$$\int_U\int_{D^k_t}|F(t,z_1,\cdots,z_k)|^{2/m}e^{-\varphi_k(t,z_1,\cdots,z_k)/m}.$$
By definition of the $m$-Bergman kernel and Proposition \ref{prop:product property},  it is larger than
$$\int_U|F(t,s(t),\cdots,s(t))|^{2/m}e^{-k\psi_m(t,s(t))/m}.$$
Let $g(t)=F(t,s(t),\cdots,s(t))$, then $g$ is a holomorphic function on $U$ and $g(t_0)=1$, and satisfies the following estimate
$$\int_{U}|g|^{2/m}e^{-k\psi_m(t,s(t))/m}\leq Ce^{-k\psi_m(t_0,s(t_0))/m},$$
where $C$ is a constant independent of $m$.
By Theorem \ref{thm:cha. of p.s.h function}, $\psi_m(t,s(t))$ is a plurisubharmonic function on $U$.
\end{proof}

\subsection{For families of compact K\"ahler manifolds}\label{subsec:Bergman kernel p.s.h kahler manifolds}
Let $X, Y$ be K\"ahler  manifolds of  dimension $r+n$ and $r$ respectively,
let $p:X\rightarrow Y$  be a proper holomorphic mapping.
Let $L$ be a holomorphic line bundle over $X$, and $h$ be a singular  Hermitian metric on $L$,
whose curvature current is semi-positive. Let $K_{X/Y}$ be the relative canonical bundle on $X$.

Let $\mathcal E_m=p_*(mK_{X/Y}\otimes L\otimes \mathcal I_m(h))$ be the direct image sheaf on $Y$.
By Grauert's theorem, $\mathcal E_m$ is a coherent analytic sheaf on $Y$.
There is an Zariski open subset $U$ in $Y$ such that $\mathcal E_m|_U$ is locally free
and the fiber $E_{m,y}$ over $y\in U$ of the vector bundle associated to $\mathcal E_m|_U$, denoted by $E_m$,
can be identified with $H^0(X_y, (kK_{X/Y}\otimes L\otimes \mathcal I_k(h))|_{X_y})\subset H^0(X_y, kK_{X_y}\otimes L|_{X_y})$.

Recall that we have define the relative $m$ Bergman kernel metric, denoted by $B^m_{L,h}$, on $(mK_{X/Y}\otimes L)|_{p^{-1}(U)}$
in \S \ref{subsec:Bergman relative case}.
The main result in this subsection is the following

\begin{thm}\label{thm:p.s.h variation Berg compact}
If $h$ is locally bounded, the relative $m$ Bergman kernel metric $B^m_{L,h}$ on $(mK_{X/Y}\otimes L)|_{p^{-1}(U)}$ has nonnegative curvature current;
moreover, $B^m_{L,h}$ can be extended to a hermitian metric on $mK_{X/Y}\otimes L$ whose curvature current is nonnegative.
\end{thm}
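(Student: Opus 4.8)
The plan is to mirror the proof of Theorem \ref{thm:psh variation of m Bergman kernel}, replacing the fiber products of pseudoconvex domains by fiber products of the K\"ahler family $p:X\to Y$ and replacing the domain extension theorem by its K\"ahler counterpart, Theorem \ref{thm:Lm-extension projective}. The statement is local on $Y$, and the curvature current of $B^m_{L,h}$ is nonnegative exactly when its local weight is plurisubharmonic; in a local frame this weight is the logarithm of the relative $m$-Bergman kernel, denoted $\psi_m$, which is upper semicontinuous by Corollary \ref{cor:m-Berg continuous:compact case}. Since $h$ is locally bounded, its local weights are bounded, whence $\mathcal I_m(h)=\mathcal O_X$ and over $U$ the fibers of $E_m$ are simply $H^0(X_y,mK_{X_y}\otimes L|_{X_y})$ with the $m$-norm. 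To prove that $\psi_m$ is plurisubharmonic on $p^{-1}(U)$ I would verify two facts: (i) for every local holomorphic section $\sigma$ of $p$ the function $\psi_m\circ\sigma$ is plurisubharmonic, and (ii) the restriction of $\psi_m$ to each fiber is plurisubharmonic. Fact (ii) is the classical statement for the absolute $m$-Bergman kernel on a single compact manifold, and (i)+(ii) force $\psi_m$ to be plurisubharmonic: any holomorphic disc in $p^{-1}(U)$ either lies in a fiber, where (ii) applies, or projects nonconstantly to $Y$, in which case away from its discrete critical set it factors through a local section, so (i) applies, the finitely many critical points being removable because $\psi_m$ is locally bounded above.

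The heart of the argument is Fact (i), proved by raising the power of the family. Fix $m$, a local section $\sigma:V\to X$ with $V\Subset U$ a coordinate ball, and a point $y_0\in V$ with $\psi_m(\sigma(y_0))\neq-\infty$. For each $k\geq1$ I would form the $k$-fold fiber product $X_k=X\times_Y\cdots\times_Y X$, which over $U$ is a smooth K\"ahler manifold properly submersive onto $Y$ with fiber $X_y^{k}$, carrying the line bundle $L_k=\bigotimes_{i=1}^k\mathrm{pr}_i^*L$ with the metric $h_k=\bigotimes_i\mathrm{pr}_i^*h$, whose curvature current is still semipositive. By the product property (Proposition \ref{prop:product property-manifold}, applied to the fiber $X_y^k=X_y\times\cdots\times X_y$) the relative $m$-Bergman kernel of $X_k$ at the diagonal point equals $K_m(\sigma(y))^{k}$; taking on $X_{y_0}^{k}$ the minimal $m$-Bergman section $f$ with value $1$ at the diagonal point gives $\int_{X_{y_0}^{k}}|f|^{2/m}h_k^{1/m}=e^{-k\psi_m(\sigma(y_0))/m}$. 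Theorem \ref{thm:Lm-extension projective} then yields an extension $F\in H^0(X_k,mK_{X_k}\otimes L_k)$ of $f$ with
\[
\int_{X_k}|F|^{2/m}h_k^{1/m}\leq C_{r,1}\int_{X_{y_0}^{k}}|f|^{2/m}h_k^{1/m}=C_{r,1}\,e^{-k\psi_m(\sigma(y_0))/m},
\]
where $C_{r,1}$ depends only on the base dimension $r$ and hence, crucially, is independent of $k$. Writing the left side as an integral over $Y$ of the fiber integrals and bounding each below by its value at the diagonal point (again Proposition \ref{prop:product property-manifold}), the diagonal restriction $g(y):=F(\sigma(y),\dots,\sigma(y))$ is holomorphic on $V$, satisfies $g(y_0)=1$, and obeys
\[
\int_{V}|g|^{2/m}e^{-k\psi_m(\sigma(y))/m}\leq C_{r,1}\,e^{-k\psi_m(\sigma(y_0))/m}.
\]
This is exactly the hypothesis of Theorem \ref{thm:cha. of p.s.h function} for $\psi_m\circ\sigma$, with the fixed exponent $p=2/m$, with $k$ playing the role of the parameter there, and with the constant sequence $C_{r,1}$ trivially satisfying $\tfrac{1}{k}\log C_{r,1}\to0$. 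Hence $\psi_m\circ\sigma$ is plurisubharmonic, establishing Fact (i).

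For the ``moreover'' part I would extend the metric across the bad locus. The set $p^{-1}(A)$ is a proper analytic subset of $X$, and $\psi_m$ is plurisubharmonic on its complement $p^{-1}(U)$ by the above. Because $h$ is locally bounded, the submean value inequality on fibers furnishes a local upper bound for the relative $m$-Bergman kernel: for a fiber section $u$ with $\|u\|_m=1$ one estimates $|u(x)|^{2/m}$ by its fiber average against the bounded weight $h^{1/m}$, so $\psi_m=\log K_m$ is locally bounded above near $p^{-1}(A)$. The Riemann-type extension theorem for plurisubharmonic functions then extends $\psi_m$ uniquely to a plurisubharmonic weight across $p^{-1}(A)$, producing the desired semipositively curved extension of $B^m_{L,h}$ to all of $mK_{X/Y}\otimes L$.

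I expect the main obstacle to be the bookkeeping around the hypotheses of Theorem \ref{thm:Lm-extension projective}: one must check that each $X_k$ is K\"ahler and smooth over $U$, and, more delicately, that there is a single neighborhood $V$ of $y_0$ over which, for \emph{every} $k$, the minimal fiber section $f$ extends to a local section of the fiber-product direct image, as required there. This is guaranteed by local freeness of the $k$-fold direct image sheaves over $U$ (so that evaluation onto the fiber is surjective), but arranging it uniformly in $k$ is the technical crux. The remaining delicate point is the uniform upper bound for $\psi_m$ as the fibers degenerate along $A$, where the fiber volumes in a fixed local chart must stay bounded below; this is where local boundedness of $h$ enters essentially. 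By contrast, the independence of the constant from $k$ is automatic, since forming fiber products leaves the base dimension $r$ unchanged.
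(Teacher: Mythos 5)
Your main argument --- the fiber products $X_k$, the product property of $m$-Bergman kernel metrics (Proposition \ref{prop:product property-manifold}), the $L^{2/m}$ extension theorem with a constant depending only on the base dimension $r$ and hence independent of $k$, and Theorem \ref{thm:cha. of p.s.h function} applied to $\psi_m\circ\sigma$ --- is exactly the paper's proof of the first statement; your explicit reduction of plurisubharmonicity on the total space to plurisubharmonicity along sections plus along fibers is, if anything, spelled out more carefully than in the paper, and you correctly identify the extendability of the minimal fiber section to a local section of the $k$-fold direct image as the hypothesis of Theorem \ref{thm:Lm-extension projective} that needs checking.

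The gap is in the ``moreover'' part. A submean value inequality taken \emph{on the fiber} bounds $|u(x)|^{2/m}$ by an average of $|u|^{2/m}h^{1/m}$ over a coordinate polydisk \emph{inside} $X_y$ centered at $x$, and the constant it produces involves the inverse volume of that polydisk. As $y$ approaches $A$ the fibers degenerate, and there is no uniform lower bound on the radius of such a fiber polydisk around points approaching the singularities of the limit fiber; so the fiberwise estimate does not yield an upper bound for $\psi_m$ that is locally uniform near $p^{-1}(A)$, which is precisely what the extension across $p^{-1}(A)$ requires. Local boundedness of $h$ does not repair this: it controls the factor $\sup h^{-1/m}$, not the fiber geometry. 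The paper's fix (following Berndtsson--P\u{a}un) is different: given $u$ on $X_y$ with $\|u\|_m\le 1$, one first extends $u$ to a bounded pseudoconvex \emph{ambient} neighborhood $\Omega\subset X$ of the point via Theorem \ref{thm:Lm-extension on psc}, obtaining $\widetilde U$ with $\int_\Omega|\widetilde U|^{2/m}h^{1/m}\le C_0$ for an absolute constant $C_0$, and then applies the mean value inequality to the coefficient of $\widetilde U$ on an ambient polydisk of fixed radius $r_0$; the resulting bound depends only on $X$ and the sup of $h^{-1/m}$ there, not on the geometry of $X_y$. You flagged the degeneration issue as delicate, but the mechanism you propose to resolve it (fiber volumes bounded below in a fixed chart, plus boundedness of $h$) is not the right one --- the ambient Ohsawa--Takegoshi extension step is the missing idea.
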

\begin{proof}
The proof of the first statement is similar to the proof of Theorem \ref{thm:psh variation of m Bergman kernel}.
We may assume that $U=\mathbb B^r$ is the unit ball in $\mc^r$.
By Corollary \ref{cor:m-Berg continuous:compact case}, $B^m_{L,h}$ is lower semi-continuous.

For simplicity, we denote the line bundle $mK_{X/Y}\otimes L$ on $X$ by $F$ and denote $p^{-1}(U)$ by $\Omega$.
Let $s:U\rightarrow \Omega$ be an arbitrary holomorphic map.
Let $W$ be a neighborhood of the image of $h$ in $\Omega$ such that $F$
has a holomorphic frame $e$ on $W$.
Let $\psi$ be the weight of the metric $B^m_{L,h}$ on $W$ with respect to the frame $e$.
It suffices to prove that $\psi\circ s$ is a plurisubharmonic function on $U$ (or identically equal to $-\infty$).
We will prove that $\psi\circ s$ satisfies the condition in Theorem \ref{thm:cha. of p.s.h function}.

For $k\geq 1$, let
$$\Omega_k=\{(x_1,\cdots, x_k)\in \Omega^k; p(x_1)=\cdots=p(x_k)\}$$
be the $k$-th fiber product power of $\Omega$ with respect to the map $p:\Omega\rightarrow U$.
Let $p_k:\Omega_k\rightarrow U$ be the natural projection,
and let $\Omega^k_{t}=p^{-1}_k(t)$ for $t\in U$.
The line bundle $L$ on $\Omega$ induces a line bundle $L_k$ on $\Omega_k$ whose
fiber at $(x_1,\cdots, x_k)$ is $L_{x_1}\otimes\cdots\otimes L_{x_k}$.
The metric $h$ on $L$ naturally induces a metric $h_k$ on $L_k$ whose curvature current is nonnegative.

Let $t_0\in U$ be an arbitrary point such that $\psi(s(t_0))\neq -\infty$.
We denote $s(t_0)\in \Omega$ by $z$  and denote $e(s(t_0))$ by $a$.
Note that $(z,\cdots, z)\in \Omega_k$ and we can identify
$(mK_{\Omega_k/U}\otimes L_m)|_{(z,\cdots,z)}$ with $(mK_{X/Y}\otimes L)|^{\otimes k}_{z}=F_{z}\otimes\cdots\otimes F_{z}$.
By Proposition \ref{prop:product property-manifold}, $a^{\otimes k}:=a\otimes\cdots\otimes a\in (mK_{\Omega_k/U}\otimes L_k)|_{(z,\cdots,z)}$
has square norm $e^{-k\psi(z)}$  with respect to the relative $m$-Bergman kernel metric on $mK_{\Omega_k/U}\otimes L_k$.

By definition of the $m$-Bergman kernel metric,
there is $f\in H^0(\Omega^k_{t_0},mK_{\Omega^k_{t_0}}\otimes L_m|_{\Omega^k_{t_0}})$
such that $f(z,\cdots, z)=a^{\otimes k}$, and
$$\int_{\Omega^k_{t_0}}|f|^{2/m}h_k^{1/m}=e^{-k\psi(z)/m}.$$
By Theorem \ref{thm:Lm-extension projective},  there is an $F\in H^0(\Omega_k, mK_{\Omega_k}\otimes L_k)$ such that $F|_{\Omega_{t_0}^k}=f\wedge dt_1\wedge\cdots\wedge dt_r$ and
\begin{align}\label{eqn:exis of extension if bounded}
\int_{\Omega_k}|F|^{2/m}h^{1/m}_k\leq C\int_{\Omega^k_{t_0}}|f|^{2/m}h^{1/m}_k,
\end{align}
where $(t_1,\cdots, t_r)$ are the coordinates on $U$ and $C$ is a constant independent of $t_0$ and $k$.
The left hand side of the above inequality is
$$\int_U\int_{\Omega^k_t}|(F|_{\Omega^k_t})|^{2/m}h^{1/m}_k.$$
Locally we write $F(x_1,\cdots, x_k)=\tilde F(x_1,\cdots, x_k)e^{\otimes k}$.
By definition of the $m$-Bergman kernel and Proposition \ref{prop:product property-manifold},  it is larger than
$$\int_U|\tilde F(t,s(t),\cdots,s(t))|^{2/m}e^{-k\psi(t,s(t))/m}.$$
Let $g(t)=\tilde F(t,s(t),\cdots,s(t))$, then $g$ is a holomorphic function on $U$ and $g(t_0)=1$, and satisfies the following estimate
$$\int_{U}|g|^{2/m}e^{-k\psi(t,s(t))/m}\leq Ce^{-k\psi(t_0,s(t_0))/m},$$
where $C$ is a constant independent of $k$.
By Theorem \ref{thm:cha. of p.s.h function}, $\psi(t,s(t))$ is a plurisubharmonic function on $U$.

For the proof of the second statement,
it suffices to prove that the Bergman kernel metric is bounded locally below by positive constants.
We follow the idea in \cite{BP08}.  Fix an arbitrary point $p\in X_y$.  Let $u\in H^0(X_y, mK_{X_y}\otimes L|_{X_y})$ such that
\begin{align*}
\int_{X_y}|u|^{2/m}h|_{X_y}^{1/m}\leq 1.
\end{align*}
Let $\Omega\subset X $ be a bounded pseudoconvex  neighborhood of $p$ in $X$. From Theorem \ref{thm:Lm-extension on psc}, there is a local $m$-canonical form $\widetilde{U}$ on $\Omega$  which extends $u$
 and satisfies
 \begin{align*}
 \int_{\Omega}|\widetilde{U}|^{2/m}h^{1/m}\leq C_0\int_{\Omega_y}|u|^{2/m}h|_{\Omega_y}^{1/m}\leq C_0,
 \end{align*}
where $C_0$ is an absolute constant. We write as $\widetilde{U}=U' (dz)^{\otimes m}$. By the mean value inequality on a polydisk  $D_{r_0}$ of poly-radius $(r_0,\cdots, r_0)$ centered at $p$, we  obtain that
\begin{align*}
|u(p)|^{2/m}=|U'(p)|^{2/m}&\leq \frac{1}{(\pi r_0^2)^{r+n}}\int_{D_{r_0}}|U'|^{2/m}d\lambda_z\\
&\leq  \frac{1}{(\pi r_0^2)^{r+n}}\sup_{D_{r_0}}h^{-1/m}\int_{D_{r_0}}|\widetilde{U}|^{2/m}h^{1/m}\\
&\leq C,
\end{align*}
where $C$ is a constant which does not depend on the geometry of the fiber $X_y$, but on the ambient manifold $X$. We thus complete the proof of the theorem.
\end{proof}

\begin{rem}\label{rem:about boundedness of wt}
The boundedness assumption about $h$ in Theorem \ref{thm:p.s.h variation Berg compact}
is to ensure that the extension $F$ exists and satisfies the estimate in \eqref{eqn:exis of extension if bounded}.
When $m=1$, this assumption is not necessary, since by Theorem \ref{thm:Demailly OT},
the extension with estimate always exists.
On the other hand, the result in Theorem \ref{thm:p.s.h variation Berg compact} still holds
without the assumption that $h$ is locally bounded,
because applying similar argument as in \cite{BP08} one can show that Theorem \ref{thm:p.s.h variation Berg compact}
is indeed a consequence of Theorem \ref{thm:Positivity of hodge compact case} that we will prove in \S \ref{sec:positivity of hodge}.
But we will not present the details of the argument of Berndtsson-P\u{a}un in the present article.
\end{rem}

\section{Positivity of direct images of twisted relative canonical bundles}\label{sec:positivity of hodge}


In this section, we generalize the idea in \S \ref{sec:p.s.h var of m-bergman} to show that the direct image of relative canonical bundles twisted by pseudoeffective line bundles
associated to certain families of pseudoconvex domains or compact K\"ahler manifolds are semi-positive in the sense of Griffiths.

\subsection{For families of pseudoconvex domains}
Let $U, D$ be bounded pseudoconvex domains in $\mc^r$ and $\mc^n$ respectively, and let $\Omega=U\times D\subset\mc^r\times\mc^n$.
Let $\varphi$ be a p.s.h function on $\Omega$,
which is for simplicity assumed to be bounded.
For $t\in U$, let $D_t=\{t\}\times D$ and $\varphi_t(z)=\varphi(t,z)$.
Let $E_t=H^2(D_t, e^{-\varphi_t})$ be the space of $L^2$ holomorphic functions on $D_t$ with respect to the weight $e^{-\varphi_t}$.
Then $E_t$ are Hilbert spaces with the natural inner product.
Since $\varphi$ is assumed to be bounded on $\Omega$, all $E_t$ for $t\in U$ are equal as vector spaces,
however, the inner products on them depend on $t$ if $\varphi(t,z)$ is not constant with $t$.
So, under the natural projection,  $E=\coprod_{t\in\Delta}E_t$ is a trivial holomorphic vector bundle (of infinite rank) over $U$
with varying Hermitian metric.

In \cite{Bob06} , Berndtsson proved that $E$ is semipositive in the sense of Griffths, namely,
for any local holomorphic section $\xi$ of the dual bundle $E^*$ of $E$,
the function $\log|\xi|$ is plurisubharmonic (indeed Berndtsson proved a stronger
result which says that $E$ is semipositive in the sense of Nakano).
The aim here is to provide a new proof of the positivity of $E$,
based on the new charaterization of plurisubharmonic functions (Theorem \ref{thm:cha. of p.s.h function})
and the technique of rising powers of domains.

\begin{thm}\label{thm:Positivity of hodge domain case}
The vector bundle $E$ is semipositive in the sense of Griffths.
\end{thm}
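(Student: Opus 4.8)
The plan is to reduce the Griffiths semipositivity of $E$ to the $L^p$-characterization of plurisubharmonic functions (Theorem \ref{thm:cha. of p.s.h function}), following the same scheme as in the proof of Theorem \ref{thm:psh variation of m Bergman kernel}, but now applied to an arbitrary local holomorphic section of the dual bundle $E^*$ rather than to the Bergman kernel itself. Fix a local holomorphic section $\xi$ of $E^*$ on an open set $U'\subset U$, and set $\varphi_\xi(t):=\log|\xi(t)|_{H^*}$. Since $\varphi$ is bounded, the norm $H^*$ is upper semicontinuous by an argument entirely parallel to Proposition \ref{prop: dual hodge u.s.c domain}, so it suffices to verify that $\varphi_\xi$ satisfies the hypotheses of Theorem \ref{thm:cha. of p.s.h function}, namely a multiple extension estimate with subexponential constants.

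The key device is again the fiber-product power. For each $m\ge 1$ I form $\Omega_m=U\times D^m$ with weight $\varphi_m(t,z_1,\dots,z_m)=\sum_{i=1}^m\varphi(t,z_i)$, which is again plurisubharmonic and bounded. At a point $t_0$ where $\varphi_\xi(t_0)\ne -\infty$, choose $u_{t_0}\in E_{t_0}$ with $|u_{t_0}|=1$ realizing (up to $\epsilon$) the dual norm, i.e. $|\langle\xi(t_0),u_{t_0}\rangle|=|\xi(t_0)|_{H^*}$. The tensor power $u_{t_0}^{\otimes m}$, viewed as a holomorphic function on the fiber $D_{t_0}^m$, satisfies $\int_{D_{t_0}^m}|u_{t_0}^{\otimes m}|^2 e^{-\varphi_m(t_0,\cdot)}=1$. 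By the Ohsawa-Takegoshi extension of Theorem \ref{thm:Demailly OT} (for the pseudoconvex domain $\Omega_m$), I extend $u_{t_0}^{\otimes m}$ to a holomorphic function $F_m$ on $\Omega_m$ with
\begin{align*}
\int_{\Omega_m}|F_m|^2 e^{-\varphi_m}\leq C\int_{D_{t_0}^m}|u_{t_0}^{\otimes m}|^2 e^{-\varphi_m(t_0,\cdot)}=C,
\end{align*}
where $C=C_{r,M}$ is independent of $t_0$ and $m$. For fixed $t$ the slice $F_m(t,\cdot)\in E_t^{\otimes m}$, and pairing against $\xi(t)^{\otimes m}\in (E^*_t)^{\otimes m}$ produces a holomorphic function $g_m(t):=\langle\xi(t)^{\otimes m},F_m(t,\cdot)\rangle$ on $U'$ with $g_m(t_0)=\langle\xi(t_0),u_{t_0}\rangle^m$. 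By the pointwise duality inequality $|\langle\xi(t)^{\otimes m},F_m(t,\cdot)\rangle|\le |\xi(t)|_{H^*}^m\,|F_m(t,\cdot)|_{H^m}$ together with Fubini, one integrates $|g_m(t)|^{2}e^{-2m\varphi_\xi(t)}$ over $U'$ and bounds it by $\int_{\Omega_m}|F_m|^2 e^{-\varphi_m}\le C$; normalizing so that $g_m(t_0)=1$ yields the desired estimate $\int_{U'}|g_m|^{2}e^{-2m\varphi_\xi}\le C\,e^{-2m\varphi_\xi(t_0)}$ with constant $C$ growing subexponentially. Applying Theorem \ref{thm:cha. of p.s.h function} (with $p=2$ and the weight $\varphi_\xi$) then forces $\varphi_\xi=\log|\xi|$ to be plurisubharmonic, which is exactly Griffiths semipositivity.

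The step I expect to be the main obstacle is the clean passage from the $L^2$ estimate on the total space $\Omega_m$ to the planar $L^p$ estimate on the base $U'$ for the single function $g_m$, namely justifying the pairing and duality inequality pointwise in $t$ and controlling that the resulting constants satisfy $\frac{1}{m}\log C_m\to 0$. This requires care because the fibers are identified only as vector spaces (the inner products vary with $t$), and because $\xi$ is merely a section of the dual of an infinite-rank bundle; one must check that $\langle\xi(t)^{\otimes m},F_m(t,\cdot)\rangle$ is genuinely holomorphic in $t$ and that the dual-norm inequality is preserved under tensoring, which is where the product formula $h^{*m}(\xi_1\cdots\xi_m)=h^*(\xi_1)\cdots h^*(\xi_m)$ of \S\ref{subsec:char. of positive v.b.} enters. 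Once these measure-theoretic and functional-analytic points are settled—exactly as in the finite-rank Theorem \ref{thm:cha of positive bundle}, whose argument the first remark after Theorem \ref{thm-intr:cha of positive bundle} asserts carries over to infinite rank—the remainder is a direct invocation of the characterization theorem.
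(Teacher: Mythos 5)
Your proposal is correct and follows the paper's proof of Theorem \ref{thm:Positivity of hodge domain case} almost step for step: fiber-product powers $\Omega_m=U\times D^m$ with weight $\varphi_m=\sum_i\varphi(t,z_i)$, the Hilbert tensor powers $E^{\hat\otimes m}_t=H^2(D^m_t,e^{-\varphi_m})$ with $|\xi^{\otimes m}|=|\xi|^m$ (Lemma \ref{lem:product property Bergman space}), Ohsawa--Takegoshi extension of the extremal $u_{t_0}^{\otimes m}$ across $\Omega_m$ with a constant $C_{r,M}$ depending only on the fixed codimension $r$ and not on $m$ or $t_0$, and reduction to Theorem \ref{thm:cha. of p.s.h function}. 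The one place you diverge is the last step: the paper does not use the pairing $g_m(t)=\langle\xi(t)^{\otimes m},F_m(t,\cdot)\rangle$ directly as the test function, but applies Ohsawa--Takegoshi a second time on the base with weight $2\log|\langle\xi^{\otimes m},F_m(t,\cdot)\rangle|$ to produce an auxiliary $h$ with $h(t_0)=1$, and then Cauchy--Schwarz yields $\int_U|h|e^{-m\log|\xi|}\leq\sqrt{CC'}e^{-m\log|\xi(t_0)|}$, i.e.\ the $p=1$ case of the characterization. Your shortcut --- normalizing $g_m$ by $g_m(t_0)=\langle\xi(t_0),u_{t_0}\rangle^m$ and integrating $|g_m|^2e^{-2m\log|\xi|}\leq\|F_m(t,\cdot)\|_t^2$ via Fubini --- is simpler and valid, landing in the $p=2$ case with exponent $2m$; since the proof of Theorem \ref{thm:cha. of p.s.h function} only needs the estimate along a sequence of exponents tending to infinity (the paper itself invokes it with exponents $k/m$ in Theorem \ref{thm:psh variation of m Bergman kernel}), the restriction to even exponents is harmless. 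Two small points to tighten: choose the extremal $u_{t_0}$ exactly (Riesz representation in the Hilbert space $E_{t_0}$, using Lemma \ref{lem:dual norm finite} to know $|\xi(t_0)|<\infty$), since a fixed $\epsilon$-almost-maximizer would contribute a factor $\bigl(|\xi(t_0)|/(|\xi(t_0)|-\epsilon)\bigr)^{2m}$ to $C_m$ that both depends on $t_0$ and violates $\frac{1}{m}\log C_m\to 0$; and the holomorphy in $t$ of $\langle\xi(t)^{\otimes m},F_m(t,\cdot)\rangle$, which you rightly flag as the delicate point in infinite rank, is exactly the assertion the paper also makes without detailed proof, resting on Definition \ref{def:holo section of dual bundle} and the boundedness of $\varphi$.
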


Before giving the proof of Theorem \ref{thm:Positivity of hodge domain case},
we first recall the notion of \emph{Hilbert tensor product} of Hilbert spaces and prove a related lemma.
Let $V$ and $W$ be two Hilbert spaces.
For $v\in V, w\in W$, the norm of $v\otimes w$ is defined to be $\|v\|\|w\|$.
If $\{v_i\}_{i\in I}$ and $\{w_j\}_{j\in J}$ are  orthonormal basis' of $V$ and $W$ respectively,
then the Hilbert tensor product $V\hat\otimes W$ of $V$ and $W$ is defined to be the Hilbert space with $\{v_i\otimes w_j\}_{i\in I, j\in J}$ as an orthonormal basis.
It is easy to show that the definition of $V\hat\otimes W$ is independent of the choice of the orthonormal basis'
of $V$ and $W$.
By definition one can check that $(V\hat\otimes W)^*=V^*\hat\otimes W^*$.
The definition can be naturally generalized to the tensor product of several Hibert spaces.
In particular, we can define the tensor powers $V^{\hat\otimes k}:=V\hat\otimes\cdots\hat\otimes V$ ($(k\geq 1)$) of a Hilbert space $V$.
Let $V$ be a Hilbert space.
For $v\in V$, it is obvious that the norm of $v^{\otimes k}:=v\otimes \cdots\otimes v\in V^{\hat\otimes k}$ is $\|v\|^k$ for all $k\geq 1$.

\begin{lem}\label{lem:product property Bergman space}
Let $D_1, D_2$ be bounded domains in $\mc_z^n$ and $\mc_w^m$ respectively.
Let $\phi_1$ and $\phi_2$ be plurisubharmonic functions on $D_1$ and $D_2$.
Then
$$H^2(D_1\times D_2, e^{-(\varphi_1+\varphi_2)})=H^2(D_1,e^{-\varphi_1})\hat\otimes H^2(D_2,e^{-\varphi_2}).$$
\end{lem}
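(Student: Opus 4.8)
The plan is to exhibit an explicit orthonormal basis of $H^2(D_1\times D_2,e^{-(\varphi_1+\varphi_2)})$ built from orthonormal bases of the two factors, and then to recognize it as precisely the orthonormal basis defining the Hilbert tensor product. Fix orthonormal bases $\{e_i\}_{i\in I}$ of $H^2(D_1,e^{-\varphi_1})$ and $\{f_j\}_{j\in J}$ of $H^2(D_2,e^{-\varphi_2})$, and set $g_{ij}(z,w):=e_i(z)f_j(w)$. First I would verify that $\{g_{ij}\}$ is an orthonormal system in $H:=H^2(D_1\times D_2,e^{-(\varphi_1+\varphi_2)})$: since the weight factors as $e^{-\varphi_1(z)}e^{-\varphi_2(w)}$, each $g_{ij}$ lies in $H$ with $\|g_{ij}\|=\|e_i\|\,\|f_j\|=1$, and Fubini's theorem (applicable because the integrand is in $L^1$ by Cauchy--Schwarz) splits the inner product as $\langle g_{ij},g_{i'j'}\rangle_H=\langle e_i,e_{i'}\rangle\langle f_j,f_{j'}\rangle=\delta_{ii'}\delta_{jj'}$. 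This already gives an isometric embedding $H^2(D_1,e^{-\varphi_1})\hat\otimes H^2(D_2,e^{-\varphi_2})\hookrightarrow H$ sending $e_i\otimes f_j$ to $g_{ij}$, so the whole content of the lemma is the surjectivity of this map.

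The substance of the proof is therefore \emph{completeness}: I would show that $\{g_{ij}\}$ spans a dense subspace of $H$, equivalently that any $F\in H$ orthogonal to every $g_{ij}$ vanishes. Given such an $F$, I would introduce the coefficient functions
$$G_j(z):=\int_{D_2}F(z,w)\overline{f_j(w)}e^{-\varphi_2(w)}\,d\lambda(w),$$
the Fourier coefficients of the slice $F(z,\cdot)$ against $\{f_j\}$, and check three things about them. First, $G_j$ is holomorphic on $D_1$; I would prove this by Morera's theorem, integrating $G_j$ over the boundary of a triangle and interchanging the order of integration (justified by Fubini), so that the holomorphy of $z\mapsto F(z,w)$ forces the contour integral to vanish. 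Second, $G_j\in H^2(D_1,e^{-\varphi_1})$: by Cauchy--Schwarz $|G_j(z)|\le\|F(z,\cdot)\|_{H^2(D_2,e^{-\varphi_2})}$, whence $\int_{D_1}|G_j|^2e^{-\varphi_1}\le\|F\|_H^2<\infty$ by Fubini. Third, the hypothesis $\langle F,g_{ij}\rangle_H=0$ rewrites, again via Fubini, as $\langle G_j,e_i\rangle=0$ for all $i$; completeness of $\{e_i\}$ then forces $G_j\equiv 0$ for every $j$.

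It remains to conclude $F\equiv 0$ from $G_j\equiv 0$. Since $\|F\|_H^2<\infty$, Fubini gives $F(z,\cdot)\in H^2(D_2,e^{-\varphi_2})$ for almost every $z\in D_1$; for such $z$ the vanishing of all Fourier coefficients $G_j(z)$ together with completeness of $\{f_j\}$ forces $F(z,\cdot)=0$ in $H^2(D_2,e^{-\varphi_2})$, hence $F(z,w)=0$ for almost every $(z,w)$, and finally $F\equiv 0$ by continuity of the holomorphic function $F$. Thus $\{g_{ij}\}$ is a complete orthonormal system, the embedding above is onto, and $H=H^2(D_1,e^{-\varphi_1})\hat\otimes H^2(D_2,e^{-\varphi_2})$, as claimed. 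The main obstacle is this completeness step --- specifically, establishing the holomorphy and the $H^2$-membership of the coefficient functions $G_j$ --- while the orthonormality in the first paragraph is a routine Fubini computation enabled entirely by the product structure of the weight.
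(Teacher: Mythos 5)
Your proof is correct and its first half (the orthonormality of $\{e_if_j\}$ via the factorization of the weight and Fubini) coincides with the paper's. Where you genuinely diverge is in the completeness step, which you rightly identify as the substance of the lemma. The paper gets completeness almost for free from its Proposition on the product property of ($m$-)Bergman kernels: since $\sum_{i,j}|e_i(z)f_j(w)|^2=K_1(z)K_2(w)$ and the latter is shown (by the extremal characterization plus Fubini) to equal the Bergman kernel of $H^2(D_1\times D_2,e^{-(\varphi_1+\varphi_2)})$ on the diagonal, any orthonormal system whose diagonal kernel sum already exhausts the reproducing kernel must be complete. Your route is a direct slicing argument: take $F$ orthogonal to all $e_if_j$, form the coefficient functions $G_j(z)=\langle F(z,\cdot),f_j\rangle$, show they are holomorphic elements of $H^2(D_1,e^{-\varphi_1})$ orthogonal to every $e_i$, hence zero, and then kill $F$ slice by slice. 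This is more self-contained (it does not need the kernel product formula) but costs you the analytic bookkeeping on the slices. The one point you should shore up is that $G_j$ is defined and \emph{continuous} everywhere on $D_1$ before invoking Morera, and that $F(z,\cdot)\in H^2(D_2,e^{-\varphi_2})$ for \emph{every} $z$ rather than almost every $z$: both follow because $z\mapsto\|F(z,\cdot)\|^2=\int_{D_2}|F(z,w)|^2e^{-\varphi_2(w)}\,d\lambda(w)$ is an integral of plurisubharmonic functions of $z$, hence satisfies the sub-mean value inequality and is locally bounded (using that $e^{-\varphi_1}$ is locally bounded below), which gives the local uniform integrability needed for continuity of $G_j$ and for the Fubini interchange over the contour. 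With that detail supplied, your argument is a complete and valid alternative proof.
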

\begin{proof}
Let $\{f_i\}^{\infty}_{i=1}$ and $\{g_i\}^{\infty}_{i=1}$ be  orthonormal basis' of
$H^2(D_1,e^{-\varphi_1})$ and $H^2(D_2,e^{-\varphi_2})$ respectively.
Then $K_1(z)=\sum_i|f_i(z)|^2$ is the Bergman kernel of $H^2(D_1,e^{-\varphi_1})$
and $K_2(w)=\sum_i|g_i(w)|^2$ is the Bergman kernel of $H^2(D_2,e^{-\varphi_2})$.
Let $K(z,w)=\sum_{i,j}|f_i(z)g_j(w)|^2$. It is clear that $K(z,w)=K_1(z)K_2(w)$.
By Fubini theorem. $\{f_i(z)g_j(w)\}^{\infty}_{i,j=1}$ is an orthonormal set of $H^2(D_1\times D_2, e^{-(\varphi_1+\varphi_2)})$.
By Proposition \ref{prop:product property}, the Bergman kernel of $H^2(D_1\times D_2, e^{-(\varphi_1+\varphi_2)})$ equals to $K_1(z)K_2(w)$.
So $\{f_i(z)g_j(w)\}^{\infty}_{i,j=1}$ is an orthonormal basis of  $H^2(D_1\times D_2, e^{-(\varphi_1+\varphi_2)})$
and hence
$$H^2(D_1\times D_2, e^{-(\varphi_1+\varphi_2)})=H^2(D_1,e^{-\varphi_1})\hat\otimes H^2(D_2,e^{-\varphi_2}).$$
\end{proof}

It is clear that Lemma \ref{lem:product property Bergman space} can be generalized to product of  several domains.
We now give the proof of Theorem \ref{thm:Positivity of hodge domain case}.

\begin{proof}
Let $u$ be a local holomorphic section of the dual bundle $E^*$ of $E$.
We need to prove that $\log|u(t)|$ is a plurisubharmonic function.
Without loss of generality, we assume that $u$ is a global holomorphic section,
namely a holomorphic section of $E^*$ on $U$.
The upper semi-continuity of $\log|u(t)|$ follows from Proposition \ref{prop: dual hodge u.s.c domain}.
We now prove that $\log|u(t)|$ satisfies the condition in Theorem \ref{thm:cha. of p.s.h function} for some $p>0$.

For $m\geq 1$, let $\Omega_m=U\times D^m$ and $\varphi_m(t,z_1,\cdots, z_m)=\varphi(t,z_1)+\cdots+\varphi(t,z_m)$.
For $t\in U$, we denote $t\times D^m$ by $D^m_t$.
Let $E^{\hat\otimes m}_t=H^2(D^m_t, e^{-\varphi_m})$, and $E^{\hat\otimes m}=\coprod_{t\in U}E^{\hat\otimes m}_t$.
By Lemma \ref{lem:product property Bergman space},
$E^{\hat\otimes m}$ is the $m$-th tensor power of $E$ in the Hilbert space sense.

Let $t_0\in U$ be an arbitrary point such that $u(t_0)\neq 0$.
By the definition of tensor powers of Hilbert spaces given as above,
$u^{\otimes m}$ is a nonvanishing holomorphic section of $(E^{\hat\otimes m})^*=(E^*)^{\hat\otimes m}$,
and $|u^{\otimes m}(t)|=|u(t)|^m$. Let $f\in E^{\hat\otimes m}_{t_0}$ such that
$$\int_{D^m_{t_0}}|f|^2e^{-\varphi_m(t_0,z_1,\cdots z_m)}=1$$
and $<u^{\otimes m}(t_0),f>=|u(t_0)|^m$.

By Theorem \ref{thm:Demailly OT}, there eixsts $F\in \mathcal O(\Omega_m)$ such that $F|_{D^m_{t_0}}=f$
and
\begin{equation}\label{eqn:OT Thm hodge>0 domain case}
\int_{\Omega_m}|F(t,z_1,\cdots,z_m)|^2e^{-\varphi_m(t,z_1,\cdots z_m)}\leq C,
\end{equation}
where $C$ is a constant independent of $t_0$ and $m$.
Let $F_t(z_1,\cdots, z_m)=F(t,z_1,\cdots, z_m)$ and
$$||F_t||_t^2=\int_{D^m_t}|F_t|^2e^{-\varphi_m(t,z_1,\cdots,z_m)}.$$
Since $\varphi$ is bounded, by the mean value inequality, $\|F_t\|_t\leq +\infty$.
This implies $F_t$ lies in $E_t^{\hat\otimes m}$ for all $t\in U$ and hence $F$
can be seen as a holomorphic section of $E^{\hat\otimes m}$.

From the definition of $|u^{\otimes m}(t)|$, it is clear that
\begin{align*}
\|F_t\|_t|u(t)|^m\geq |\langle u^{\otimes m}(t), F_t\rangle|,
\end{align*}
and hence
$$e^{-m\log|u(t)|}\leq e^{-\log|<u^{\otimes m}(t),F_t>|}||F_t||_t.$$

Note that $<u^{\otimes m}(t),F_t>$ is a holomorphic function on $U$.
By Theorem \ref{thm:Demailly OT}, there is a holomorphic function $h$ on $U$ such that $h(t_0)=1$
and
\begin{equation}\label{eqn:OT on base Thm hodge>0 domain case}
\int_U |h(t)|^2e^{-2\log|<u^{\otimes m}(t),F_t>|}\leq C'e^{-2\log|<u^{\otimes m}(t_0),F_{t_0}>|}= C'e^{-2m\log|u(t_0)|},
\end{equation}
where $C'$ is a constant independent of $m$ and $t_0$.
So we have the estimate
\be
\begin{split}
     &\int_{U}|h(t)|e^{-m\log|u(t)|}\\
\leq &\int_{U}|h(t)|e^{-\log|<u^{\otimes m}(t),F_t>|}||F_t||_t\\
\leq &\left(\int_{U}|h(t)|^2|e^{-2\log|<u^{\otimes m}(t),F_t>|}\int_U||F_t||^2_t\right)^{1/2}\\
\leq &\sqrt{CC'}e^{-m\log|u(t_0)|},
\end{split}
\ee
where the last inequality follows from \eqref{eqn:OT Thm hodge>0 domain case}, \eqref{eqn:OT on base Thm hodge>0 domain case} and Fubini theorem.
By Theorem \ref{thm:cha. of p.s.h function}, $\log|u(t)|$ is subharmonic.
\end{proof}

\subsection{For families of compact K\"ahler manifolds}
In this subsection, we study the positivity of the direct image sheaf
of the twisted relative canonical bundle associated to a family of compact K\"ahler manifolds.

Let $X, Y$ be K\"ahler  manifolds of  dimension $r+n$ and $r$ respectively,
and let $p:X\rightarrow Y$  be  a proper holomorphic map.
For $y\in Y$ let $X_y=p^{-1}(y)$ , which is a compact submanifold of $X$ of dimension $n$ if $y$ is a regular value of $p$.
Let $L$ be a holomorphic line bundle over $X$, and $h$ be a singular  Hermitian metric on $L$,
whose curvature current is semi-positive.
Let $K_{X/Y}$ be the relative canonical bundle on $X$.

Let $\mathcal E_k=p_*(kK_{X/Y}\otimes L\otimes \mathcal I_k(h))$
and $\tilde{\mathcal E}_k=p_*(kK_{X/Y}\otimes L)$ be the direct image sheaves on $Y$.
We can choose a proper analytic subset $A\subset Y$ such that:
\begin{itemize}
\item[(1)] $p$ is submersive over $Y\backslash A$,
\item[(2)] both $\mathcal E_k$ and $\tilde{\mathcal E}_k$ are locally free on $X\backslash A$,
\item[(3)] for $y\in Y\backslash A$, $E_{k,y}$ and $\tilde E_{k,y}$ are naturally identified with
$H^0(X_y,kK_{X_y}\otimes L|_{X_y}\otimes \mathcal I_k(h)|_{X_y})$ and $H^0(X_y,kK_{X_y}\otimes L|_{X_y})$ respectively,
\end{itemize}
where $E_k$ and $\tilde E_k$ are the vector bundles on $Y\backslash A$
associated to $\mathcal E_k$ and $\tilde{\mathcal E_k}$ respectively.
For $u\in\tilde E_{k,y}$, as in \S \ref{subsec:Bergman manifold-case},
the $k$-norm of $u$ is defined to be
$$H_k(u):=\|u\|_k=\left(\int_{X_y}|u|^{2/k}h^{1/k}\right)^{k/2}\leq +\infty.$$
Then $H_k$ is a Finsler metric on $\tilde E_k$,
whose restriction on $E_k$ gives a Finsler metric on $E_k$,
which will be also denoted by $H_k$.
In the case that $k=1$, we denote $\mathcal E_1, \tilde{\mathcal E_1}, E_1, \tilde E_1, H_1$
by $\mathcal E, \tilde{\mathcal E}, E, \tilde E, H$ respectively.
The following theorem says that $H$ is a positively curved singular Hermitian metric
in the coherent sheaf $\mathcal E$ (see Definition \ref{def:finsler on sheaf} for definition).

\begin{thm}\label{thm:Positivity of hodge compact case}
With the above assumptions and notations,
$H$ is a positively curved singular metric on $\mathcal E$.
\end{thm}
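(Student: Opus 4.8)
The plan is to reduce the statement, over the locus where everything is a genuine bundle, to the characterization of Griffiths positivity by the multiple $L^p$-extension property (Theorem \ref{thm:cha of positive bundle}), and then to cross the degeneration locus by a semicontinuity argument. By Definition \ref{def:finsler on sheaf} it suffices to prove that for every local holomorphic section $\xi$ of the dual sheaf $\mathcal E^*$ the function $\log|\xi|_{H^*}$ is plurisubharmonic on $Y\setminus A$ and extends to a plurisubharmonic function across $A$. Over a coordinate ball $U\subset Y\setminus A$ on which $p$ is a submersion, $E=E_1$ is a holomorphic vector bundle of finite rank carrying the Hermitian metric $H$ with $H(u)^2=\int_{X_y}|u|^2h$, and the upper semicontinuity of the norm of sections of $E^*$ demanded in the hypothesis of Theorem \ref{thm:cha of positive bundle} is precisely Proposition \ref{prop:dual Hodge metric upper semi-cont.:compact}. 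Thus the core of the argument is to verify that $(E,H)$ has the multiple $L^2$-extension property of Definition \ref{def:multiple extension prop} over $U$.

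To set this up I would introduce, as in the proofs of Theorems \ref{thm:Positivity of hodge domain case} and \ref{thm:psh variation of m Bergman kernel}, the $m$-fold fiber product $X_m=X\times_Y\cdots\times_Y X$, which over $U$ is a proper submersion $p_m\colon X_m\to U$ with fibre $X_y^m$, together with the line bundle $L_m$ whose fibre at $(x_1,\dots,x_m)$ is $L_{x_1}\otimes\cdots\otimes L_{x_m}$ and the induced semipositive metric $h_m$. Using the canonical identification of $K_{X_m/U}$ with the external tensor power of $K_{X/Y}$ together with Fubini's theorem --- the $L^2$ incarnation, dual to the product property of Bergman kernel metrics in Proposition \ref{prop:product property-manifold} and the analogue for compact fibres of Lemma \ref{lem:product property Bergman space} --- one identifies $E^{\otimes m}|_U$ isometrically with $p_{m,*}(K_{X_m/U}\otimes L_m)$, so that a vector $a^{\otimes m}\in E_{y_0}^{\otimes m}$ corresponds to the $m$-canonical form $a\otimes\cdots\otimes a$ on $X_{y_0}^m$ and its $H$-norm is $|a|^m$.

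The extension step is then a single application of the Ohsawa--Takegoshi theorem on the fiber product. Fix $y_0\in U$ and a nonzero $a\in E_{y_0}$ of finite norm; then $a^{\otimes m}$ lies in the multiplier ideal on $X_{y_0}^m$ because $\int_{X_{y_0}^m}|a^{\otimes m}|^2h_m=\bigl(\int_{X_{y_0}}|a|^2h\bigr)^m<\infty$. Applying Theorem \ref{thm:Lm-extension projective} with $k=1$ (equivalently Theorem \ref{thm:Demailly OT}) to $p_m\colon X_m\to U$, I would extend $a^{\otimes m}$ to a holomorphic section $F$ of $K_{X_m}\otimes L_m$ over $X_m$ with $\int_{X_m}|F|^2h_m\leq C_{r,1}\,|a|^{2m}$, where $C_{r,1}$ depends only on $r=\dim Y$ and is independent of $y_0$ and $m$. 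Pushing $F$ down yields a holomorphic section $f_m$ of $E^{\otimes m}$ over $U$ with $f_m(y_0)=a^{\otimes m}$, and Fubini's theorem converts the above estimate into $\int_U|f_m|^2\leq C_{r,1}|a|^{2m}$. Hence the multiple $L^2$-extension property holds with the constant sequence $C_m=C_{r,1}$, which trivially satisfies $\tfrac1m\log C_m\to0$, and Theorem \ref{thm:cha of positive bundle} shows that $(E,H)$ is Griffiths positive on $Y\setminus A$; equivalently $\log|\xi|_{H^*}$ is plurisubharmonic there for every local section $\xi$ of $E^*$. (Alternatively one may inline the two extensions exactly as in the proof of Theorem \ref{thm:Positivity of hodge domain case} and feed the resulting inequality into Theorem \ref{thm:cha. of p.s.h function}; Theorem \ref{thm:cha of positive bundle} merely packages this.)

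It remains to cross $A$ and to descend to the sheaf $\mathcal E$. By Proposition \ref{prop:dual Hodge metric upper semi-cont.:compact} the function $\log|\xi|_{H^*}$ is upper semicontinuous and locally bounded above near $A$, so by the remark following Definition \ref{def:finsler on sheaf} (or because one may arrange $\mathrm{codim}_{\mathbb C}A\geq 2$) it extends uniquely across $A$ as a plurisubharmonic function; the passage from the positivity of $p_*(K_{X/Y}\otimes L)$ to that of $\mathcal E=p_*(K_{X/Y}\otimes L\otimes\mathcal I(h))$ is then furnished by the Oka--Grauert principle, as announced in the introduction. I expect the main difficulty to lie not in the standard extension and semicontinuity inputs but in the relative geometry underpinning them: making precise the isometric identification of $E^{\otimes m}$ with $p_{m,*}(K_{X_m/U}\otimes L_m)$, so that the chosen vector, its tensor powers and their $L^2$-norms all transform compatibly through the relative-canonical bookkeeping, and controlling $\log|\xi|_{H^*}$ near the degeneration locus $A$ sharply enough to guarantee its plurisubharmonic extension.
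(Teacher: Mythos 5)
Your main step---fiber-product powers $X_m$, a single Ohsawa--Takegoshi extension of $a^{\otimes m}$ over $X_m$ with a constant independent of $m$, and the characterization of positivity via the multiple $L^2$-extension property---is exactly the paper's mechanism; the paper merely inlines the proof of Theorem \ref{thm:cha of positive bundle} (a second extension on the base, Cauchy--Schwarz, then Theorem \ref{thm:cha. of p.s.h function}) instead of citing it, and, as in your closing remark, it first proves positivity for $\tilde{\mathcal E}=p_*(K_{X/Y}\otimes L)$ and only then passes to $\mathcal E$ by splitting $\tilde E=E\oplus E'$ over the Stein base and using that vectors of $\tilde E\setminus E$ have infinite norm, so the dual norm of a section of $E^*$ equals that of its extension by zero. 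One caveat worth recording on your "isometric identification": the tensor-power metric of Definition \ref{def:metric on tensor power} is of injective type and is in general strictly smaller than the fiberwise $L^2$ (Hilbert tensor) norm on $p_{m,*}(K_{X_m/U}\otimes L_m)$; but the inequality goes the right way for your estimate and equality holds on the decomposable vector $a^{\otimes m}$, so this is a point of precision rather than an obstruction.

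The genuine gap is in crossing $A$. You claim that Proposition \ref{prop:dual Hodge metric upper semi-cont.:compact} makes $\log|\xi|_{H^*}$ ``locally bounded above near $A$.'' That proposition is proved only over the locus where $p$ is a submersion and $\mathcal E$ is locally free; upper semicontinuity on the open set $V\setminus A$ bounds the function near points of $V\setminus A$ but says nothing about $\sup_{K\setminus A}\log|\xi|_{H^*}$ for a compact $K$ meeting $A$, which is what the remark after Definition \ref{def:finsler on sheaf} requires. Your fallback of arranging $\mathrm{codim}_{\mathbb C}A\geq 2$ is not available either: $A$ contains the critical values of $p$, which is typically a divisor. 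The paper closes this with a separate argument: for $y\in V\setminus A$ choose $a\in E_y$ of unit norm with $\langle\xi(y),a\rangle=|\xi(y)|$, extend $a$ by Theorem \ref{thm:Demailly OT} to a section $s$ of $\mathcal E$ over all of $V$ (across $A$) with $\int_V|s|^2_h\leq C$ uniformly in $y$ and $a$; the family of all such extensions is normal by the mean value inequality and Montel's theorem, so the holomorphic functions $\langle\xi,s\rangle$ are uniformly bounded on compacta, which bounds $|\xi|_{H^*}$ near $A$. Without this uniform-extension-plus-normal-families step (or a substitute), the plurisubharmonic extension across $A$ is unjustified.
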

\begin{proof}
The proof splits into three steps.

\subsubsection*{Step 1}
We prove that $H$ is a positively curved singular Finsler metric on $\tilde E\ra U:=Y\backslash A$.
The argument is similar to that in the proof of Theorem \ref{thm:Positivity of hodge domain case}.
Let $u$ be a local holomorphic section of $\tilde E^*$.
By definition, we need to show that $\log|u|$ is a plurisubharmonic function.
Without loss of generality, in this step we can assume that $U=\mathbb B^r$ is
the unit ball and $u$ is a holomorphic section of $\tilde E^*$ on $U$.

For $m\geq 1$, let $X_m=\{(y, z_1, \cdots z_m); y\in U, z_1, \cdots, z_m\in X_y\}$
be the $m$-th fiber-product power of $X$.
The is a natural proper holomorphic submersion from $X_m$ to $U$,
which is denoted by $p_m:X_m\ra U$.
Let $X^m_y=p_m^{-1}(y)$ be the fiber over $y\in U$.

For $1\leq i\leq m$, we have a projection $\pi_i:X_m\ra X$ which sends $(y, z_1,\cdots, z_m)$ to $(y,z_i)$.
Let $L_m=\pi^*_1L\otimes\cdots\otimes\pi^*_mL$ and let $h_m$ be the singular Hermitian metric on $L_m$ induced from
the metric $h$ on $L$. Then the curvature current of $h_m$ is nonnegative.

Note that $H^0(X^m_y, K_{X^m_y}\otimes L_m|_{X^m_y})=H^0(X_y, K_{X_y}\otimes L_{X_y})^{\otimes m}$ for $y\in U$.
Indeed, this follows from Proposition \ref{prop:product property-manifold} by putting a smooth hermitian metric on $L$.
In particular, the dimension of $H^0(X^m_y, K_{X^m_y}\otimes L_m)$ is independent of $y\in U$.
Since $U$ is assumed to be the unit ball, we can identify $K_{p_m^{-1}(U)/U}$ with $K_{p_m^{-1}(U)}$
So $p_{m,*}(K_{X_{m}/U}\otimes L_m)|_U$ is locally free and corresponds to a holomorphic vector bundle, say $\tilde E^m$, on $U$,
and we have $\tilde E^m=\tilde E^{\otimes m}$.
In the same way as defining $H$, we can define a Finsler metric,  say $H^m$, on $\tilde E^m$.
For $y\in U$, let $\tilde E_{y,b}$ and $\tilde E^{m}_{y,b}$ be subspaces of $\tilde E_y$ and $\tilde E^m_y$
consisting of vectors of finite norm.
By Lemma  \ref{lem:product property Bergman space}, we have $\tilde E^{m}_{y,b}=(\tilde E_{y,b})^{\otimes m}$.

Recall that $u$ is a holomorphic section of $\tilde E^*$ on $U$,
and we need to prove that $\log|u|$ is a plurisubharmonic function on $U$.
Note that the restriction $u|_E$ of $u$ on $E$ is a holomorphic section of $E^*$.
The point is that, by definition of the dual norm in Definition \ref{def:dual finsler metric},
the norm of $u|_E$ and $u$ are equal.
Therefore, by Proposition \ref{prop:dual Hodge metric upper semi-cont.:compact}, $\log|u|$ is upper semicontinuous.
Now it suffices to prove that $\log|u|$ satisfies the condition in Theorem \ref{thm:cha. of p.s.h function}.

Let $y_0\in U$ be any given point such that $|u(y_0)|\neq 0$.
$u^{\otimes m}$ is a holomorphic section of $(\tilde E^*)^{\otimes m}={\tilde E^{m*}}$.
Note that the definition of the norm of $u^{\otimes m}$ only involves vectors in $\tilde E^m$ of finite norm,
by Lemma  \ref{lem:product property Bergman space}, we have $|u^{\otimes m}(y)|=|u(y)|^m$.
There exists $f_{y_0}\in \tilde E^{m}_{y_0}$ such that
$\|f_{y_0}\|:=H^m(f_{y_0})=1$ and $|<u^{\otimes m}(y_0),f_{y_0}>|=|u(y_0)|^m$.
By Theorem \ref{thm:Demailly OT},
there is $F\in H^0(p_m^{-1}(U), (K_{X_m}\otimes L_m)|_{p_m^{-1}(U)})$ such that $F|_{X^m_{y_0}}=f_{y_0}$
and
$$\int_{X_m}|F(y,z_1,\cdots,z_m)|^2e^{-\varphi_m(z,z_1,\cdots, z_m)}\leq C,$$
where $\varphi_m$ is the weight of $h_m$ and $C$ is an absolute constant independent of $y_0$ and $m$.
For $y\in U$, let $F_y(z_1,\cdots, z_m)=F(y,z_1,\cdots, z_m)$, then $F_y\in \tilde E^m_{y}$ and
$$\|F_y\|^2=\int_{X^m_y}|F_y|^2e^{-\varphi_m(y,z_1,\cdots,z_m)}.$$
From the definition of $|u^{\otimes m}(y)|$, it is clear that
\begin{align*}
\|F_y\||u(y)|^m\geq |\langle u^{\otimes m}(y), F_y\rangle|,
\end{align*}and hence
$$e^{-m\log|u(y)|}\leq e^{-\log|<u^{\otimes m}(y),F_y>|}||F_y||.$$

Note that $F$ can be seen as a holomorphic section of $\tilde E^m$ on $U$,
so $<u^{\otimes m}(y),F_y>$ is a holomorphic function on $U$.
By Ohsawa-Takegoshi extension theorem (Theorem \ref{thm:Demailly OT}),
there is a holomorphic function $h$ on $U$ such that $h(t_0)=1$ and
$$\int_U |h(y)|^2e^{-2\log|<u^{\otimes m}(y),F_y>|}\leq C'e^{-2m\log|u(y_0)|},$$
where $C'$ is an absolute  constant independent of $m$ and $y_0$.
So we have the estimate
\be
\begin{split}
    &\int_{U}|h(y)|e^{-m\log|u(y)|}\\
    \leq &\int_{U}|h(y)|e^{-\log|<u^{\otimes m}(y),F_y>|}||F_y||\\
    \leq &\left(\int_{U}|h(y)|^2|e^{-2\log|<u^{\otimes m}(y),F_y>|}\int_U||F_y||^2\right)^{1/2}\\
    \leq &\sqrt{CC'}e^{-m\log|u(y_0)|}.
\end{split}
\ee
By Theorem \ref{thm:cha. of p.s.h function}, $\log|u(t)|$ is plurisubharmonic.

\subsubsection*{Step 2}
We prove that $H$ is a positively curved singular Finsler metric on $E\ra U:=Y\backslash A$.
We also assume that $U=\mathbb B^r$ be the unit ball.
Note that $E$ is a holomorphic subbundle of $\tilde E$.
Since $U$ is a Stein manifold, there is a holomorphic subbundle $E'$ of $\tilde E$
such that $\tilde E$ splits as $E\oplus E'$ (see e.g. Corollary 2.4.5 in \cite{Forstneric11}).
So any holomorphic section $u$ of $E^*$ on $U$ can be extended to a holomorphic section $\tilde u$
of $\tilde E^*$ by setting $u(a)=0$ for all $a\in E'$.
Note that the norm of any vector in $\tilde E\backslash E$ is $+\infty$ (by Theorem \ref{thm:Demailly OT}),
by definition, the norm of $u$ and $\tilde u$ are equal.
By the result in Step 1, $\log|\tilde u|$ is plurisubharmonic,
so $\log|u|$ is plurisubharmonic.

\subsubsection*{Step 3}
We will complete the proof of Theorem \ref{thm:Positivity of hodge compact case} in this final step.
Let $u$ be a holomorphic section of the dual sheaf $\mathcal E^*$ of $\mathcal E$ on some open set $V$ in $Y$.
We want to show that $|u|_{V\backslash A}$ is bounded above on all compact subsets of $V$.
Once this is established, $\log|u|$ can be extended uniquely to a plurisubharmonic function on $V$ and we are done.

The proof of the boundeddness of $\log|u|$ follows from the idea in the proof of Proposition 23.3 in \cite{HPS16}.
Without loss of generality, we assume that $V=\mathbb B^r$ is the unit ball.
For any $y\in V\backslash A$ such that $|u(y)|\neq 0$ (otherwise there is nothing to prove),
there is $a\in E_y$ such that $|a|=1$ and $<u(y),a>=|u(y)|$.
By Theorem \ref{thm:Demailly OT}, there exists a holomorphic section $s$ of $\mathcal E$ on $V$
such that $s(y)=a$ and
$$\int_{V}|s|^2_h\leq C,$$
where $C$ is a constant independent of $y$ and $a$.
Let
$$S=\{f\in H^0(V,\mathcal E); \int_{V}|f|^2_h\leq C\}.$$
Since the metric $h$ on $L$ is lower semicontinuous,
by the mean value inequality and Montel theorem, $S$ is a normal family,
namely, any sequence in $S$ has a subsequence that converges uniformly on compact subsets of $V$.
Note that if $s_j$ is a subsequence of $S$ that converges uniformly on compact subsets of $V$,
then the sequence of holomorphic functions $<u,s_j>$ converges on compact subsets of $V$,
and hence is uniformly bounded on compact sets of $V$.
So $\{<u,s>;s\in S\}$ is uniformly bounded on compact sets of $V$.
\end{proof}

\begin{rem}
By Theorem \ref{thm:p.s.h variation Berg compact} (and the remark following it) and Theorem \ref{thm:Positivity of hodge compact case},
one can see that the NS metric (see \S \ref{subsec:Bergman relative case} for definition) on the direct image $p_*(kK_{X/Y}\otimes L\otimes \mathcal I_k(h))$
is positively curved in the sense of Griffiths.
\end{rem}

\end{document}